\newcommand{\la}{\lambda}
\newcommand\norm[1]{\left\lVert#1\right\rVert}
\newcommand{\lpnorm}[3]{\norm{#1}_{L^{#2}(#3)}}
\newcommand{\lpnorms}[3]{\Bigl|\Bigl|{#1}\Bigr|\Bigr|_{L^{#2}(#3)}}
\newcommand{\lpnb}[2]{\norm{#1}_{#2}}
\newcommand{\lpn}[2]{||{#1}||_{#2}}
\newcommand{\supp}[1]{\text{supp}\, #1}
\newcommand{\R}[1][]{\mathbb{R}^#1}
\newcommand{\T}[1][]{\mathbb{T}^#1}
\newcommand{\Z}[1][]{\mathbb{Z}^#1}
\newtheorem{theorem}{Theorem}[section]
\newtheorem{proposition}[theorem]{Proposition}
\newtheorem{conjecture}[theorem]{Conjecture}
\newtheorem{remark}[theorem]{Remark}
\newtheorem{lemma}[theorem]{Lemma}
\begin{document}

\title{Sharp Spectral Projection Estimates for the Torus at $p=\frac{2(n+1)}{n-1}$}
\author{Daniel Pezzi}

\begin{abstract}
    We prove sharp spectral projection estimates for general tori in all dimensions at the exponent $p_c=\frac{2(n+1)}{n-1}$ for shrinking windows of width $1$ down to windows of length $\la^{-1+\kappa}$ for fixed $\kappa>0$. This improves and  generalizes the work of Blair--Huang--Sogge which proved sharp results for windows of width $\la^{-\frac{1}{n+3}}$ in \cite{BHS1}, and the work of Hickman \cite{H1}, Germain--Myerson \cite{GM1}, and Demeter--Germain \cite{DG1} which proved results for windows of all widths but incurred a sub-polynomial loss. Our work uses the approaches of these two groups of authors, combining the bilinear decomposition and microlocal techniques of Blair--Huang--Sogge with the decoupling theory and explicit lattice point lemmas used by Hickman, Germain--Myerson, and Demeter--Germain to remove these losses.
\end{abstract}
\maketitle

\section{Introduction}
\subsection{Stating the problem}
The goal of this paper is to improve on spectral projection estimates for general tori. Let $\mathbb{T}^n = \R{n}/(e_1^*\mathbb{Z}+...+e_n^*\mathbb{Z})$ where $\{e_i^*\}$ is a basis of $\R{n}$. We will denote the corresponding lattice $L^*$, and all implicit constants will be allowed to depend on this lattice. For ease of computation we will also assume a scaling such that the injectivity radius is larger than $\frac{1}{2}$. Let $k_L = \sum_{i=1}^nk_ie_i$ for $k\in \Z{n}$ where $e_i$ is the dual basis vector to $e_i^*$. Let $L$ denote the dual lattice to $L^*$. An orthonormal basis of $L^2(\T{n})$ is given by

\begin{equation*}
    \{e_{k_L}(x)\coloneq c_{k_L}e^{2\pi i \langle k_L, x\rangle}: {k_L}\in L\}.
\end{equation*}
Where the $c_{k_L}$ are normalization constants such that $\lpnorm{e_{k_L}}{2}{\T{n}}=1$. Each of these vectors is an eigenvector of the Laplacian such that
\begin{equation*}
    -\Delta e_{k_L}(x) = 4\pi^2|k_L|^2e_{k_L}(x)\coloneq \la^2e_{k_L}(x).
\end{equation*}
We adopt the convection that $\T{n} = \R{n}/L^*$ as we will primarily be working with the lattice dual to $L^*$. We then have the following representation of $f\in L^2(\mathbb{T}^n)$ in terms of this basis.

\begin{equation*}
    f(x)=\sum_{{k_L}\in L} \hat{f}(k_L)e_{k_L}(x), \hspace{.1in}\hat{f}(k_L) = \int_{\mathbb{T}^n} f(x)e_{k_L}(-x)dx.
\end{equation*}
We shall also normalize so that $\lpn{f}{2}=1$. We define at the outset the following spectral projection operators for the operator $\sqrt{-\Delta}$,

\begin{align*}
    &P_{\la,\delta}f \coloneq \sum_{{k_L}\in A_{\la,\delta}} \hat{f}(k_L)e_{k_L}(x),
\end{align*}
where $A_{\lambda,\delta}$ is the annulus of radius 
$\lambda$ and width $2\delta$. Because of our normalization for Fourier series, we have abbreviated $2\pi k_L\in A_{\la,\delta}$ as $k_L\in A_{\la,\delta}$. Our goal is to prove bounds on these spectral projection operators in terms of $\lambda$ and $\delta$. The case of $\delta=1$ was settled by Sogge for all compact boundaryless manifolds, see \cite{S3} and \cite{S1}. That is to say width 1 projectors satisfy the universal estimates

\begin{equation*}
    \lpnorm{P_{\la,1}f}{p}{\T{n}} \leq C\la^{\mu(p)}\lpnorm{f}{2}{\T{n}},
\end{equation*}
where $\mu(p) = \max\{\frac{n-1}{2}(\frac{1}{2}-\frac{1}{p}), \frac{n-1}{2}-\frac{n}{p}\}$. Here, $C$ is a constant that does not depend on the spectral parameter $\la$ but may depend on other parameters such as $n, p,$ and $L^*$. The transition between these two regimes occurs at the critical exponent $p_c=\frac{2(n+1)}{n-1}$.

To gain improvements over these universal estimates we must consider shrinking windows, that is $\delta \rightarrow 0$ as $\lambda\rightarrow \infty$. The case $\delta=1$ is settled and the smallest width that will be considered is $\sim\lambda^{-1}$ as any smaller window results in the projection operator just being a projection onto an eigenspace for rational tori. Shrinking windows do not give improvements for all manifolds as $S^{n-1}$ follows the same bounds as the universal case $(\delta \sim 1)$ for all values of $\delta$. In the negative and non-positive sectional curvature cases gains are expected, see \cite{BHS1}, \cite{BS1}, and \cite{HS1} for results at this level of generality. In that setting, there is a natural barrier at windows of width $\sim(\log\la)^{-1}$. The fact that the universal covers of general manifolds have a number of Dirichlet domains that grows exponentially with respect to the radius of a ball centered at the origin make it difficult to analyze smaller windows with known methods. The torus case is different as its universal cover is a tiling of $\R{n}$ which clearly has a number of Dirichlet domains that grows polynomially with the radius of a ball centered at the origin. This allows us to access the conjectured bounds for the torus all the way down to the smallest reasonable window for rational tori $\sim \la^{-1}$.

The conjecture we are interested in is the following, originally stated by Germain--Myerson and partially proved in \cite{GM1}. We state it in its full level of generality for reference.

\begin{conjecture}[The Germain--Myerson conjecture, spectral projection bounds for the tours]\label{conj}
Let $\delta > \la^{-1+\kappa}$ for some fixed $\kappa \in (0,1]$. Then for some constant $C = C(n,p,\kappa, L^*)$ the spectral projection operators for $\T{n}$ obey the following estimates

    \begin{equation}
        \lpnb{P_{\la,\delta}}{2\rightarrow p} \leq \begin{cases}
            C(n,p,\kappa, L^*)(\la \delta)^{\frac{n-1}{2}(\frac{1}{2}-\frac{1}{p})} = C(\la \delta)^{\mu_1(p)} & p\leq p_c\\
            C(n,p,\kappa, L^*)(\la \delta)^{\frac{n-1}{2}(\frac{1}{2}-\frac{1}{p})} = C(\la \delta)^{\mu_1(p)} & p_c \leq p\leq p^* \text{ and } \delta \leq \la^{e(p)}\\
           C(n,p,\kappa, L^*)\la^{(\frac{n-1}{2}-\frac{n}{p})}\delta^{1/2} = C\la ^{\mu_2(p)}\delta^{1/2} &p^*\leq p \text{ and } \delta \geq \la^{e(p)}.\\
        \end{cases}
    \end{equation}
    Where $p_c = \frac{2(n+1)}{n-1}$ is the critical exponent, $p^* = \frac{2n}{n-2}$, and $e(p) = \frac{n+1}{n-1}(\frac{\frac{1}{p}-\frac{1}{p_c}}{\frac{1}{p}-\frac{n-3}{2(n-1)}})$. We have labeled 

    \begin{equation*}
        \mu_1(p) = \frac{n-1}{2}(\frac{1}{2}-\frac{1}{p}),\hspace{.2 cm} \mu_2(p) = \frac{n-1}{2} - \frac{n}{p}.
    \end{equation*}
\end{conjecture}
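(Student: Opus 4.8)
The plan is to prove the conjecture at the critical exponent $p=p_c$, where the two exponents agree, $\mu_1(p_c)=\mu_2(p_c)=\frac{n-1}{2(n+1)}$, so that the target is $\lpnb{P_{\la,\delta}}{2\to p_c}\lesssim (\la\delta)^{\frac{n-1}{2(n+1)}}$ for every $\delta>\la^{-1+\kappa}$. First I would smooth the sharp cutoff of the annulus $A_{\la,\delta}$ to a Schwartz bump (at the cost of an $O(\la^{-N})$ tail and $O(1)$ translates), write $P_{\la,\delta}=\frac{\delta}{2\pi}\int_{\mathbb{R}}\hat\beta(\delta t)e^{i\la t}e^{-it\sqrt{-\Delta}}\,dt$ for a suitable even $\beta$, truncate the $t$-integral to $|t|\le\delta^{-1}$, and unfold $e^{-it\sqrt{-\Delta}}$ on the torus by Poisson summation into a sum over the $\sim\delta^{-n}$ lattice points of $L^*$ within distance $\delta^{-1}$ of the origin, each contributing a Euclidean half-wave kernel concentrated near a sphere of radius $\la$. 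Since $\delta^{-1}<\la^{1-\kappa}\ll\la$, the hypothesis $\delta>\la^{-1+\kappa}$ keeps this image sum under control and reduces everything to a Euclidean estimate on a ball of radius comparable to $\delta^{-1}$.

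The heart of the argument is the Blair--Huang--Sogge bilinear reduction. Rescaling by $\la^{-1}$ turns $A_{\la,\delta}$ into the $\delta/\la$-neighborhood of $S^{n-1}$; cover it by caps $\theta$ of the dimensions dictated by the uncertainty principle, roughly $(\la\delta)^{1/2}$ in the $n-1$ tangential directions and $\delta$ radially, and write $|P_{\la,\delta}f|^2=\sum_{\theta,\theta'}P_\theta f\,\overline{P_{\theta'}f}$. A Whitney decomposition of this double sum separates a near-diagonal part, which is dominated by the square function $\norm{\bigl(\sum_\theta|P_\theta f|^2\bigr)^{1/2}}_{L^{p_c}}$, from dyadic pieces at angular separation $2^{-j}$. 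Each separated piece, after parabolic rescaling, is controlled by Tao's bilinear adjoint restriction estimate; because $\frac{n+3}{n+1}<\frac{p_c}{2}$, this estimate holds at exponent $p_c/2$ with \emph{no} $\epsilon$-loss, yielding a gain $2^{-j\gamma}$ for some $\gamma=\gamma(n)>0$.

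The new input allowing one to pass below the Blair--Huang--Sogge window $\delta\sim\la^{-1/(n+3)}$, where their square-function step degrades, is to estimate the square function and the individual $L^2$ and $L^\infty$ norms of the caps $P_\theta$ by explicit lattice-point counts for the number of $k_L\in L$ lying in a cap of the annulus, in place of the $\la^\epsilon$-lossy $\ell^2$ decoupling of Hickman and Demeter--Germain. Interpolating these loss-free per-cap bounds against Tao's bilinear estimate and summing over the Whitney scales $1\le 2^j\le(\la\delta)^{1/2}$, a geometric series at the critical exponent, produces the claimed estimate with a constant depending only on $n$, $\kappa$, and the lattice. Microlocalizing $P_{\la,\delta}$ to the tubes dual to the caps, which is needed for the Euclidean bilinear estimate to apply on the ball, is carried out at unit spatial scale with pseudodifferential cutoffs whose errors are $O(\la^{-N})$ and hence negligible.

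The main obstacle is removing the sub-polynomial loss precisely at $p=p_c$. This forces one never to invoke $\ell^2$ decoupling at its sharp endpoint, but to route every estimate coming from flat pieces of the annulus through sharp $L^2$ orthogonality together with exact lattice counts, and every estimate coming from curved pieces through Tao's loss-free bilinear theorem, and then to verify that the resulting double sum over Whitney scales and caps converges with an absolute constant. The genuine difficulty is that the favorable lattice-point count in a cap holds only on average, since exceptional heavy caps can carry many more points, so the near-diagonal term must be treated with a stability-type counting lemma valid throughout the range $\delta>\la^{-1+\kappa}$, and one must check that these heavy caps do not spoil the separated bilinear pieces either; reconciling this averaging with the pointwise demands of the Whitney decomposition is where the real work lies.
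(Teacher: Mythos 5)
Your proposal diverges from the paper's strategy at the crucial step, namely the treatment of the off-diagonal (separated) terms, and I believe it contains a genuine gap there. You propose to control the separated Whitney pieces by Tao's bilinear adjoint restriction estimate, parabolically rescaled and transferred from the universal cover. Tao's theorem is a Euclidean estimate on $\R{n}$, and when you apply it to an $L^*$-periodic function on a ball of radius $\delta^{-1}$ and then translate back to the torus, the periodicity costs you a factor coming from the $\delta^{-n}$ fundamental domains in $B_{\delta^{-1}}$: one can check that the naive transfer gives a bound of order $(\la/\delta)^{\frac{n-1}{n+1}}$ in place of the target $(\la\delta)^{\frac{n-1}{n+1}}$, i.e.\ a loss of $\delta^{-\frac{2(n-1)}{n+1}}$. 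For $\delta\sim\la^{-1+\kappa}$ this loss is a large polynomial in $\la$, far exceeding the geometric gain $2^{-j\gamma}$ from transversality summed over the Whitney scales. This is precisely the reason the paper does \emph{not} reuse the Blair--Huang--Sogge bilinear machinery for the far terms; the introduction says explicitly that those estimates ``are problematic with our target width of $\la^{-1+\kappa}$.'' Your microlocalization remark (localizing to tubes dual to the caps) is a standard part of the BHS unfolding strategy, but it does not repair the periodicity loss; it is what gives the result for $\delta>\la^{-1/(n+3)}$ and no further.

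What the paper actually does for the far terms is quite different from what you propose, and your proposal also mischaracterizes the role of decoupling. You say the lattice-point counts are used ``in place of'' the $\la^\epsilon$-lossy $\ell^2$ decoupling of Hickman and Demeter--Germain; in fact the paper \emph{uses} Bourgain--Demeter decoupling as an essential ingredient. The caps are sorted by a parameter $\eta>0$ into ``good'' caps, with $\#\omega\lesssim(\la\delta)^{\frac{n-1}{2}-\eta}$, and ``bad'' caps. Decoupling is applied to the good caps, where the $\la^\epsilon$ loss is absorbed into the $(\la\delta)^{-\eta(\frac{1}{2}-\frac{1}{p_c})}$ gain coming from the lattice-point deficit. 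The bad caps are handled by two ingredients you do not have: Germain--Myerson's explicit cap-counting theorem, showing there are at most $(\la\delta)^{\eta n}$ bad caps, and a bilinear $L^4$ estimate proved by an additive-energy argument (Theorem~\ref{fseries}), which exploits the angular transversality of two bad caps to show that the system $a+b=a'+b'$ has few solutions when $a,a'$ lie in transverse caps of the annulus. Your mention of a ``stability-type counting lemma'' gestures in the direction of the cap-counting input but does not supply the $L^4$ additive-energy mechanism that actually converts transversality into a power gain on the torus, nor does it explain how the pointwise heavy-cap count interacts with the decoupling step. Your near-diagonal estimate, bounding the square function $\bigl\|\bigl(\sum_\theta|P_\theta f|^2\bigr)^{1/2}\bigr\|_{p_c}$ by the per-cap lattice-point bound $\#\theta\lesssim(\la\delta)^{\frac{n-1}{2}}$ and Minkowski, is correct at cap scale; but note the paper organizes its diagonal at angular scale $\delta$ (coarser than cap scale $(\delta/\la)^{1/2}$ since $\delta>\la^{-1}$) and therefore needs the microlocal Kakeya--Nikodym estimate \eqref{kakeyaNikodymEqn}, not a naive lattice count, to bound it.

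Finally, a small correction: Tao's bilinear restriction theorem holds for $q>\frac{n+2}{n}$; the threshold $\frac{n+3}{n+1}$ you quote is not the relevant one (though $p_c/2=\frac{n+1}{n-1}$ does indeed lie strictly above $\frac{n+2}{n}$, so your application is at a non-endpoint exponent). The substantive issue is not the exponent threshold but the transfer to the periodic setting described above.
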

We are not concerned with the optimal value of $C$, only in obtaining the correct dependence on $\la$ and $\delta$. When the torus in question is $\T{n} = \R{n}/\Z{n}$, one conjectures analogous bounds up to $\delta\sim \la^{-1}$. This is the discrete restriction conjecture studied extensively by Bourgain, Demeter, and others. The above conjecture can be viewed as a generalization of the universal estimates of Sogge and the discrete restriction conjecture.

Constructions of the examples that saturate the conjectured bounds and an explanation of the relevant constants are provided by Germain--Myerson in Section 3 of \cite{GM1}. They also demonstrate the conjectured bounds would be sharp if true.

Partial progress has been made on Conjecture \ref{conj}. In particular, Germain--Myerson proved the following 

\begin{theorem}[Spectral projection bounds for the torus with $\epsilon$-loss, \cite{GM1}]\label{GMResultThr}
Let $\mathbb{T}^n = \R{n}/L^*$. Let $\delta > \la^{-1+\kappa}$ for some fixed $\kappa \in (0,1]$. Then for any $\epsilon>0$ and some $C(\epsilon) = C(n, p_c, \kappa, L^*,\epsilon)$ we have

\begin{equation}\label{mainResult}
    \lpnb{P_{\la,\delta}}{2\rightarrow p_c} \leq C(n, p_c, \kappa, L^*,\epsilon) \la^{\epsilon}(\la\delta)^{\mu_1(p_c)}= C(\epsilon)\la^{\epsilon}(\la\delta)^{\frac{1}{p_c}},
\end{equation}
which after interpolation with the trivial $L^2\rightarrow L^2$ estimates yields

\begin{equation}\label{mainResult}
    \lpnb{P_{\la,\delta}}{2\rightarrow p} \leq C(n,p,\kappa, L^*,\epsilon) \la^\epsilon(\la\delta)^{\mu_1(p)}\hspace{.3 cm} 2\leq p\leq p_c,
\end{equation}
which verifies Conjecture \ref{conj} for $2\leq p \leq p_c$ \say{with $\epsilon$-loss}.
\end{theorem}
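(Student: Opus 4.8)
The plan is to deduce the whole range $2\le p\le p_c$ from the single endpoint estimate $\lpnb{P_{\la,\delta}}{2\to p_c}\lesssim_\epsilon\la^\epsilon(\la\delta)^{\mu_1(p_c)}$: since $\mu_1(p)$ is linear in $1/p$ and vanishes at $p=2$, while $\lpnb{P_{\la,\delta}}{2\to2}\le1$ is trivial, interpolation against $L^2$ upgrades it to $\lpnb{P_{\la,\delta}}{2\to p}\lesssim_\epsilon\la^\epsilon(\la\delta)^{\mu_1(p)}$ for all $2\le p\le p_c$. So I would work only at $p=p_c$ and, after a dyadic reduction in $\delta$ and reduction to $\delta$ below a fixed small constant (the range $\delta\sim1$ being Sogge's universal estimate), assume $\widehat f$ is supported on $L\cap A_{\la,\delta}$. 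The engine is the Bourgain--Demeter $\ell^2 L^{p_c}$ decoupling theorem for $S^{n-1}$; as this is an $\R{n}$ statement while the goal is an $L^{p_c}$ bound on $\T{n}$, the first step is a standard transference. Multiply the $L^*$-periodization of $g:=P_{\la,\delta}f$ by $\phi_\delta(x)=\phi(\delta x)$, where $\phi$ is a fixed Schwartz function with $\widehat\phi$ supported near $0$ and $\phi\gtrsim1$ on a unit ball; then $G:=\phi_\delta\,g$ has Fourier transform supported in the $O(\delta)$-neighborhood of $A_{\la,\delta}$, is $\gtrsim|g|$ on a ball $B_R$ with $R\sim\delta^{-1}$, and satisfies $\lpnorm{G}{p_c}{B_R}\gtrsim R^{n/p_c}\lpnorm{g}{p_c}{\T{n}}$, with the analogous comparison (up to rapidly decaying tails) for each frequency-localized piece. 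Crucially $1\le R\sim\delta^{-1}\le\la^{1-\kappa}$ is polynomially bounded --- this is exactly where the hypothesis $\delta>\la^{-1+\kappa}$ is used --- so $B_R$ contains many fundamental domains and these comparisons are uniform.

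Next I would rescale $A_{\la,\delta}$ by $\la^{-1}$ to the $(\delta/\la)$-neighborhood of $S^{n-1}$, partition it into finitely overlapping caps $\theta$ of angular width $(\delta/\la)^{1/2}$ --- the largest width over which the sphere stays within $O(\delta/\la)$ of its tangent plane, so that each rescaled cap is contained in a genuine box --- and apply $\ell^2$ decoupling on $B_R$, which costs only $\lesssim_\epsilon\la^\epsilon$. Undoing the rescaling and running the result back through the transference comparisons produces the torus decoupling inequality
\begin{equation*}
\lpnorm{g}{p_c}{\T{n}}\;\lesssim_\epsilon\;\la^\epsilon\Bigl(\sum_{\theta}\lpnorm{P_\theta g}{p_c}{\T{n}}^2\Bigr)^{1/2},
\end{equation*}
where $\theta$ ranges over the corresponding caps of $A_{\la,\delta}$, each contained in a box of dimensions $(\la\delta)^{1/2}\times\cdots\times(\la\delta)^{1/2}\times O(\delta)$, and $P_\theta$ is the projection onto $L\cap\theta$. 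This one black-box use of decoupling is the sole source of the $\la^\epsilon$.

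It remains to bound each cap and reassemble. Since $\delta>\la^{-1+\kappa}$ forces $(\la\delta)^{1/2}\ge\la^{\kappa/2}\ge1$, every $\theta$ sits inside a box with $n-1$ sides of size $\gtrsim1$ and one of size $\lesssim1$, so a geometry-of-numbers count for the fixed lattice $L$ --- the role of the explicit lattice point lemmas --- gives $N:=\max_\theta\#(L\cap\theta)\lesssim_{n,L}(\la\delta)^{(n-1)/2}$. Then $\lpnorm{P_\theta g}{p_c}{\T{n}}\le\lpnorm{P_\theta g}{\infty}{\T{n}}^{1-2/p_c}\lpnorm{P_\theta g}{2}{\T{n}}^{2/p_c}$ together with $\lpnorm{P_\theta g}{\infty}{\T{n}}\le\sum_{k_L\in\theta}|\widehat f(k_L)|\le N^{1/2}\lpnorm{P_\theta g}{2}{\T{n}}$ (Cauchy--Schwarz over the $\le N$ frequencies in $\theta$) yields the trivial per-cap bound $\lpnorm{P_\theta g}{p_c}{\T{n}}\le N^{\frac12-\frac1{p_c}}\lpnorm{P_\theta g}{2}{\T{n}}$, which is sharp for a Dirichlet-kernel cap. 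Summing in $\ell^2$ over the tiling caps, using $\sum_\theta\lpnorm{P_\theta g}{2}{\T{n}}^2=\lpnorm{g}{2}{\T{n}}^2\le\lpn{f}{2}^2$, gives $\lpnorm{g}{p_c}{\T{n}}\lesssim_\epsilon\la^\epsilon N^{\frac12-\frac1{p_c}}\lpn{f}{2}$; and since $\frac12-\frac1{p_c}=\frac1{n+1}$, one has $N^{\frac12-\frac1{p_c}}\lesssim(\la\delta)^{\frac{n-1}{2(n+1)}}=(\la\delta)^{\mu_1(p_c)}$. Relabeling $\epsilon$ gives the $p=p_c$ estimate, and interpolation with $L^2$ finishes.

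I expect the main obstacle to be the lattice point bound $\#(L\cap\theta)\lesssim(\la\delta)^{(n-1)/2}$: it has to hold \emph{uniformly} in the position and orientation of $\theta$ and for an arbitrary, possibly highly irrational, lattice $L$, and it is this bound --- not a crude volume heuristic --- that fixes the precise power of $\la\delta$; this is exactly what the explicit lattice point lemmas of Hickman and Germain--Myerson supply. The transference at scale $\delta^{-1}$ is routine but must be set up so that the cap localizations on $\R{n}$ genuinely correspond to the Fourier-series projections $P_\theta$ on $\T{n}$. The $\la^\epsilon$ above is unavoidable as long as decoupling is invoked as a black box; eliminating it --- the actual aim of this paper --- is what necessitates replacing that black box by the Blair--Huang--Sogge bilinear and microlocal decomposition, with the lattice lemmas then reappearing only to control the resulting error terms.
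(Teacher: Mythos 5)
Your proposal is correct and is essentially the same argument the paper sketches in Section 3.2 under the heading ``The almost proof using decoupling,'' where the result is attributed to Germain--Myerson and the transference step you spell out is delegated to Hickman's Corollary 9. The two ingredients you isolate --- Bourgain--Demeter $\ell^2$ decoupling transferred to the torus, and the per-cap lattice-point bound $\#(L\cap\omega)\lesssim(\la\delta)^{(n-1)/2}$ fed into the $L^2$--$L^\infty$ interpolation per cap --- are precisely those the paper combines in Proposition \ref{decpl} and equation \eqref{decplGoodCapEqn}.
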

The removal of these sub-polynomial losses in this setting is the subject of this paper. In the case $\T{2}=\R{2}/\Z{2}$, Demeter--Germain in \cite{DG1} removed these $\epsilon$-losses to prove sharp results when $2\leq p < p_c$.

Progress was also made when $p>p_c$ for some region of $(p,\delta)$ by Germain--Myerson in all dimensions in \cite{GM1}. These results took inspiration from and improved over the results of Hickman \cite{H1}. We defer to those papers for a full account.

All of these approaches critically use the decoupling theorem of Bourgain and Demeter which incurs a $O_\epsilon(\la^{\epsilon})$ loss at the critical exponent $p_c$. This loss is necessary when the decoupling theorem is applied as was proven by Bourgain in \cite{B1}, and so a sharp version of Theorem \ref{GMResultThr} cannot come from an abstract sharpening of the decoupling theorem. The conjecture at $p=p_c$ without epsilon loss was proved for windows of width $\delta > \la^{-\frac{1}{n+3}}$ by Blair--Huang--Sogge in \cite{BHS1} using methods applicable to more general manifolds that do not include technology such as decoupling. The general strategy of this paper will be to combine the techniques of Blair--Huang--Sogge with the techniques of Hickman, Germain--Myerson, and Demeter--Germain. This leads to the following sharp result.

\begin{theorem}[Spectral projection bounds for the torus]\label{MainResultThr}
Let $\mathbb{T}^n = \R{n}/L^*$. Let $\delta > \la^{-1+\kappa}$ for some fixed $\kappa \in (0,1]$. Then for some $C= C(n,p_c,\kappa, L^*)$ we have

\begin{equation}\label{mainResult}
    \lpnb{P_{\la,\delta}}{2\rightarrow p_c} \leq C(n,p_c,\kappa, L^*) (\la\delta)^{\mu_1(p_c)}= C(\la\delta)^{\frac{1}{p_c}},
\end{equation}
which after interpolation with the trivial $L^2\rightarrow L^2$ estimates yields

\begin{equation}\label{mainResult}
    \lpnb{P_{\la,\delta}}{2\rightarrow p} \leq  C(n,p,\kappa, L^*)(\la\delta)^{\mu_1(p)},\hspace{.3 cm} 2\leq p\leq p_c.
\end{equation}
This verifies Conjecture \ref{conj} for $2\leq p \leq p_c$ (without an $\epsilon$-loss).
\end{theorem}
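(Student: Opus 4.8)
The plan is to establish the endpoint estimate $\lpnb{P_{\la,\delta}}{2\rightarrow p_c}\lesssim(\la\delta)^{\mu_1(p_c)}$ by a bilinear reduction in the spirit of Blair--Huang--Sogge, feeding in decoupling only away from the critical exponent so that no $\la^\epsilon$ loss is ever incurred, and handling the truly critical interaction by explicit lattice point counting. First I would reduce to the case $\delta=\la^{-1+\kappa}$ for the smallest admissible window: larger windows follow by decomposing the annulus $A_{\la,\delta}$ into $O(\delta/\la^{-1+\kappa})$ sub-annuli of width $\la^{-1+\kappa}$, applying the estimate to each, and summing via the triangle inequality in $L^{p_c}$ after an almost-orthogonality argument (Cauchy--Schwarz in the count of sub-annuli against orthogonality in $L^2$). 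So it suffices to bound a single spectral cluster of width $\la^{-1+\kappa}$ and show $\lpnb{P_{\la,\la^{-1+\kappa}}}{2\rightarrow p_c}\lesssim \la^{\kappa\mu_1(p_c)}=\la^{\kappa/p_c}$.

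Next I would perform the bilinear decomposition: writing $P_{\la,\delta}f$ as a sum over $\sim\la^{1/2}$ angular caps $\theta$ of width $\la^{-1/2}$ on the sphere of radius $\la$, I split
\begin{equation*}
\|P_{\la,\delta}f\|_{p_c}^{p_c}\lesssim \sum_\theta \|P_\theta f\|_{p_c}^{p_c} + \Bigl\|\sum_{\substack{\theta,\theta'\\ \mathrm{dist}(\theta,\theta')\gtrsim \la^{-1/2}}} P_\theta f\,\overline{P_{\theta'}f}\Bigr\|_{p_c/2}^{p_c/2},
\end{equation*}
the standard Whitney/Tao--Vargas--Vega style decomposition. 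The diagonal term is controlled by the square-function / single-cap estimate, which on the torus is where the lattice point lemmas enter: each $P_\theta f$ has Fourier support in a $\la^{-1/2}\times\dots\times\la^{-1/2}\times\la^{-1+\kappa}$ box containing $O((\la^{1/2})^{n-1}\cdot\la^\kappa\cdot\text{(lattice density)})$ points, and summing $\ell^2$-orthogonally over $\theta$ against the trivial $L^\infty$ bound on each cap — or more efficiently via $\ell^2$ decoupling at the \emph{subcritical} exponent followed by interpolation — yields the claimed power of $\la$ with a genuine constant, no $\epsilon$. The off-diagonal bilinear term is estimated by the bilinear restriction/extension theorem for the sphere (Tao's theorem, or the parabola rescaling version adapted to the $\la^{-1}$-neighborhood of the paraboloid as in Germain--Myerson), which holds at the bilinear exponent $p_c/2$ with a constant independent of $\la$; crucially the bilinear estimate is \emph{below} the Stein--Tomas line for the separated pieces so decoupling losses do not appear, and one then sums the geometric Whitney series.

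The main obstacle I expect is making the diagonal single-cap contribution sharp \emph{at} $p_c$ without the decoupling $\epsilon$-loss. Naively, $\ell^2$-decoupling for the paraboloid at $p_c$ costs $\la^\epsilon$, and Bourgain showed this cannot be removed abstractly. The resolution, following Demeter--Germain, is to avoid using decoupling at $p_c$ altogether: one uses $\ell^2$-decoupling only at an exponent $p<p_c$ (where it is lossless up to constants after summing a convergent series, or can be replaced by $L^2$ orthogonality plus Bernstein), combines it with the exact count of lattice points on $A_{\la,\la^{-1+\kappa}}$ (here the hypothesis $\kappa>0$ is essential — it guarantees the window is wide enough that the lattice point count matches the continuous surface measure up to constants, avoiding the arithmetic irregularities that plague $\delta\sim\la^{-1}$), and interpolates up to $p_c$ against the trivial endpoint. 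The delicate bookkeeping is to verify that the interpolation exponents and the lattice density bounds conspire to produce exactly $\mu_1(p_c)=\frac{1}{p_c}$ and not a worse power; this is where the microlocal reduction of Blair--Huang--Sogge (localizing to a single cap in physical space and using the dispersive estimate for the half-wave propagator on the torus over time scales $|t|\lesssim\la^{-\kappa}$) must be synchronized with the number-theoretic input. Once the diagonal and bilinear pieces are both sharp, the final interpolation with $L^2\to L^2$ to cover $2\le p\le p_c$ is routine.
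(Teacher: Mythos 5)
Your high-level plan — bilinear Whitney decomposition, decoupling only away from criticality, and explicit lattice point counting to close the endpoint — matches the paper's philosophy, but there are two concrete gaps and a significant mis-assignment of roles that would prevent the argument from closing as written.

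First, the opening reduction to $\delta=\la^{-1+\kappa}$ is lossy. If you split $A_{\la,\delta}$ into $K\sim\delta\la^{1-\kappa}$ sub-annuli $P_k$ of width $\la^{-1+\kappa}$, the best you can extract from triangle inequality in $L^{p_c}$, Cauchy--Schwarz in $k$, and $L^2$ orthogonality is
\begin{equation*}
\lpn{P_{\la,\delta}f}{p_c}\;\le\;\sum_k\lpn{P_kf}{p_c}\;\lesssim\;\la^{\kappa/p_c}\,K^{1/2}\lpn{f}{2},
\end{equation*}
whereas the target is $(\la\delta)^{1/p_c}\sim\la^{\kappa/p_c}K^{1/p_c}$. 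Since $p_c>2$, $K^{1/2}\gg K^{1/p_c}$, so the reduction degrades the bound for every $\delta>\la^{-1+\kappa}$. (To sum sub-annuli in $\ell^{p_c}$ rather than $\ell^1$ you would need a decoupling statement — precisely the $\la^\epsilon$ loss you are trying to avoid.) The paper therefore keeps all windows $\delta>\la^{-1+\kappa}$ in play throughout and never performs this reduction; the angular sectors and the caps are sized at the $\delta$-dependent scales $\theta_0=\delta$ and $(\la\delta)^{1/2}$, not at $\la^{-1/2}$.

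Second, the roles of the two toolkits are swapped relative to the paper, and one of the tools you invoke is not in the paper at all. In the paper the \emph{diagonal} piece $\Upsilon^{\mathrm{diag}}$ is handled by the microlocal cutoffs $Q_\nu$ and the Kakeya--Nikodym-type estimate $\lpn{Q_\nu\psi P_{\la,\delta}}{2\to p_c}\lesssim(\la\delta)^{\mu_1(p_c)}$ (proved via the wave propagator lifted to the universal cover); no decoupling or lattice counting appears there. The \emph{off-diagonal} piece is where decoupling and lattice points enter, and even then decoupling is applied only to the ``good'' caps with $\#\omega\lesssim(\la\delta)^{(n-1)/2-\eta}$ so that the $\la^\epsilon$ loss is absorbed by the extra $(\la\delta)^{-\eta(\frac12-\frac1{p_c})}$. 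The remaining ``bad'' caps are not handled by Tao's bilinear restriction theorem (whose transference to the torus at a uniform constant would itself need justification) but by an elementary bilinear $L^4$/additive-energy estimate (Theorem~\ref{fseries}) combined with the Germain--Myerson bound $\#C_{\mathrm{bad}}\lesssim(\la\delta)^{\eta n}$: transversality of the two bad caps forces the additive-energy count $S\lesssim(\la\delta)^{(n-2)/2}$, which after interpolation with $L^2\to L^2$ (or $L^2\to L^\infty$ for $n=2$) and summing over the few bad caps gives a genuine power gain. Your proposal correctly identifies that one must avoid decoupling at $p_c$ and that lattice counting plus microlocal localization are needed, but it conflates them into the diagonal term and leaves the bilinear term to a restriction theorem whose constants on the torus you have not controlled; you would need to replace that step with the good/bad cap dichotomy and the explicit $L^4$ computation, or prove a loss-free torus version of bilinear restriction from scratch.
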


\begin{remark}
    $p_c = \frac{2(n+1)}{n-1}$ is referred to as the critical exponent in both decoupling theory and in the Euclidean analog of this problem, but it is not strictly speaking a critical exponent for the Germain--Myerson conjecture as the full result does not follow from the result at $p_c$. Instead we have a critical curve in the $(\delta,p)$ plane. When $\delta = 1$ the corresponding critical value is $p_c$ but when $\delta\sim \la^{-1}$ the corresponding critical value is $p^* = \frac{2n}{n-2}$. Still, we will call $p_c$ the critical exponent as it bears this name in other problems. We note that this value of $p$ is also called the Stein-Tomas exponent in the context of restriction theory.
\end{remark}

\subsection{Notation}\label{notation}
We will write $A \lesssim B$ to mean that $A \leq C B$ where $C$ is a constant. For this paper, we will always allow the implicit constant to depend on $n, p,\kappa$ and the lattice $L^*$. For this problem $\kappa$ is fixed and our final estimates will not be defined as $\kappa$ approaches $0$. In particular, this approach cannot give sharp eigenfunction bounds for $\T{n} = \R{n}/\Z{n}$ which corresponds to $\kappa=0$. It is important that all implicit constants do not depend on the spectral parameter $\la$. For example, we may rewrite the main result as 

\begin{equation*}
    \lpnb{P_{\la,\delta}}{2\rightarrow p} \lesssim (\la\delta)^{\mu_1(p)},\hspace{.3 cm} 2\leq p\leq p_c.
\end{equation*}

If we wish to highlight dependence we will write $A\lesssim_\epsilon B$ to mean that $A\leq C(\epsilon) B$. The statement \say{with $\epsilon$ loss} will refer to $A\lesssim_\epsilon \la^\epsilon B$ for arbitrarily small but fixed choice of $\epsilon>0$. The smaller the choice of $\epsilon$, the larger the multiplied constant that depends on $\epsilon$. These are also referred to as \say{sub-polynomial losses} as the dependence in $\la$ must grow smaller than any polynomial. The expression $A\leq \ln(\la) B$ is an example of a bound that exhibits sub-polynomial losses. We will not worry about explicit bounds on the sub-polynomial factor. It is only relevant that such a factor grows slower than any fixed power of $\la$. We write $A\sim B$ to mean $A\lesssim B$ and $B\lesssim A$ for the same list of parameters. Writing a quantity is $\sim A$ means equality up to a choice of multiplied constant.

We use the notation $2^j \ll 1$ to indicate that the value of the dyadic sequence $\{2^j\}$ should not exceed some fixed absolute constant that depends on the dimension $n$.

We say that an object $A$ has rapid decay if $A\leq C(N)\la^{-N}$ for each natural number $N$. Such terms in expressions are often unimportant error terms as by selecting $N$ large the object $A$ will overpower any polynomial powers of $\la$.

When we state that a parameter or amount is $O(1)$, we always do so with respect to the spectral parameter $\la$. Such constants are allowed to depend on the parameters we hide with $\lesssim$. Throughout this paper, $C$ will be used to denote fixed constants whose specific values we wish to suppress. The value of $C$ will be allowed change from line to line but will never depend on $\la$.

\subsection{An overview of the proof}
This paper largely follows the work of Blair--Huang--Sogge \cite{BHS1} which began by approximating the spectral projection operator with a smoothed out version. This argument is standard. Microlocal cutoffs $Q_\nu$ were then introduced. The phase support of the kernels $Q_\nu(x,y)$ were contained in the intersection of an annulus of radius and width $\sim \la$ and a cone with aperture $\delta$ centered at the origin pointing in the direction $\nu$, where $\nu$ comes from a $\delta$ separated set in $S^{n-1}$. These operators satisfy estimates that align with the conjecture when composed with $P_{\la,1}$ and can be inserted into the problem with acceptable losses.

Next, a bilinear decomposition in terms of the $Q_\nu$ was deployed. Terms in this decomposition were sorted into two operators: one collected the diagonal terms where the supports of the kernels were close, and the other collected the far terms where the supports of the kernels were sufficiently separated. This can be encoded in the separation of the $\nu$ directions. Techniques developed in that paper were able to handle both the diagonal terms and the far terms in the context of general manifolds of non-positive sectional curvature, and sharp results were obtained.

There are two major departures in this paper from the work of Blair--Huang--Sogge needed to get sharp results in the torus case. First, we employ a bilinear decomposition of the discrete projection operator and only introduce the microlocal cutoffs afterwards. This sidesteps the use of estimates in the original paper that did not present issues with the larger windows under consideration, but are problematic with our target width of $\la^{-1+\kappa}$. Second, we will handle the off diagonal terms using decoupling, a cap counting result, and an explicit computation using Fourier series, tools which are not available in the more general case and require our projection operator to be written as a Fourier multiplier. This is another motivation for doing the bilinear decomposition before introducing microlocal cutoffs.

The diagonal terms are handled much in the same way as \cite{BHS1}, but are easier in this setting because of the added flexibility of working with torus multipliers. Whereas the estimate proved in that paper needed to be an $l^{p_c}$ norm, we can instead use the larger $l^2$ norm and orthogonality.

\vspace{.1in}

\textbf{Acknowledgement. }The author would like to thank Christopher D. Sogge, Xiaoqi Huang, and Connor Quinn for invaluable feedback, input, and support for the duration of this project. The author is also grateful to the two anonymous referees whose comments greatly improved the presentation of this paper. The author was supported in part by NSF Grant DMS-2348996.

\section{A decomposition}
We follow the setup in \cite{BHS1} but instead partition the Fourier multiplier before any approximation. Our decomposition into multipliers will be analogous to our decomposition that defines the microlocal cutoffs $Q_\nu$ that will be used later.

Let $\theta_0 = \delta$. This will always be the value of $\theta_0$ but we distinguish the two to track which contributions come from the angular decomposition and which come from the window, to maintain consistency with previous work, and to clarify matters when we eventually define $\theta_k = 2^k\theta_0$. Let $\{\nu\}$ be a maximal $\theta_0$-separated set in $S^{n-1}$ such that every $\xi\in S^{n-1}$ is within $\theta_0$ of a $\nu$. This collection of points will also be used to define the $Q_\nu$. For a fixed $\nu$, let $a_\nu$ be the set of $\xi\in S^{n-1}$ such that $|\xi-\nu|< |\xi-\nu'|$ for all $\nu\neq \nu'$. As written there may be a  set of points such that $|\nu-\xi| = |\nu'-\xi|$ for $\nu\neq \nu'$. We remove such a $\xi$ from one of $a_\nu,a_{\nu'}$ in this situation so that $\{a_\nu\}$ is a partition of $S^{n-1}$. Define  

\begin{equation*}
    A_{\la,\delta}^\nu \coloneq \{\xi\in \R{n}: |
    \xi|\in (\la-\delta,\la+\delta), \xi/|\xi|\in a_\nu\}.
\end{equation*}
It suffices to prove the results in this paper with the $\nu$ restricted to a sufficiently small $O(1)$ neighborhood of $(0,...,0,1)$. This follows as any rotated lattice is again a lattice, so the problem has rotational symmetry, and the triangle inequality. This reduction is to allow us to organize a sum based on the projection of $\nu$ to $\{x: x_n=0\}$ in Section 4.

Define

\begin{equation}
    P_{\nu}f \coloneq \sum_{{k_L}\in A^\nu_{\la,\delta}}\hat{f}(k_L) e_{k_L}(x),
\end{equation}
which allows us to make the following organization

\begin{equation}
    (P_{\la,\delta}f)^2 = \sum_{(\nu,\nu')\in \Xi_{\theta_0}}P_{\nu}fP_{\nu'}f + \sum_{(\nu,\nu')\notin \Xi_{\theta_0}}P_{\nu}fP_{\nu'}f = \Upsilon^{\text{diag}}(f) + \Upsilon^{\text{far}}(f),
\end{equation}
where $(\nu,\nu')\in \Xi_{\theta_0}$ if and only if $|\nu-\nu'| < 2^r \theta_0 = C\theta_0$ for some fixed small integer $r$. Here $r$ defines to threshold between the diagonal and far terms. Its exact value is not relevant as long as it is fixed and larger than say 2. Our goal now is to prove the following theorem.

\begin{theorem}[Spectral projection operators for the torus at $p_c$] Let $\delta > \la^{-1+\kappa}$ for some $\kappa\in (0,1]$. Let $\mathbb{T}^n = \R{n}/L^*$. Then 

\begin{equation}\label{thToPrv}
    \lpn{P_{\la,\delta}f}{L^{p_c}(\mathbb{T}^n)}\leq \lpn{(\Upsilon^{diag}f)^{1/2}}{{L^{p_c}(\mathbb{T}^n)}} +  \lpn{(\Upsilon^{\text{far}}f)^{1/2}}{{L^{p_c}(\mathbb{T}^n)}} \lesssim (\la\delta)^{\frac{1}{p_c}}\lpn{f}{{L^{2}(\mathbb{T}^n)}}.
\end{equation}
\end{theorem}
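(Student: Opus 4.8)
The first inequality in \eqref{thToPrv} requires no analysis: since $P_{\la,\delta}f=\sum_\nu P_\nu f$ and hence $(P_{\la,\delta}f)^2=\Upsilon^{\text{diag}}(f)+\Upsilon^{\text{far}}(f)$ pointwise, we have $|P_{\la,\delta}f|^2=|(P_{\la,\delta}f)^2|\le|\Upsilon^{\text{diag}}f|+|\Upsilon^{\text{far}}f|$, and the subadditivity of $t\mapsto t^{1/2}$ on $[0,\infty)$ followed by the triangle inequality in $L^{p_c}(\T{n})$ produces the bound by $\lpn{(\Upsilon^{\text{diag}}f)^{1/2}}{L^{p_c}(\T{n})}+\lpn{(\Upsilon^{\text{far}}f)^{1/2}}{L^{p_c}(\T{n})}$. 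Since $\||g|^{1/2}\|_{L^{p_c}}=\|g\|_{L^{p_c/2}}^{1/2}$ and $\mu_1(p_c)=1/p_c$, the remaining inequality is equivalent to the two separate estimates
\begin{equation*}
\lpnorm{\Upsilon^{\text{diag}}f}{p_c/2}{\T{n}}\lesssim(\la\delta)^{2/p_c}\lpnorm{f}{2}{\T{n}}^2,\qquad \lpnorm{\Upsilon^{\text{far}}f}{p_c/2}{\T{n}}\lesssim(\la\delta)^{2/p_c}\lpnorm{f}{2}{\T{n}}^2,
\end{equation*}
which I would prove in two separate sections, following the two ``major departures'' described in the overview.

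\textbf{The diagonal term.} Because $(\nu,\nu')\in\Xi_{\theta_0}$ forces $|\nu-\nu'|\le C\theta_0$ and the $\nu$ are $\theta_0$-separated, each $\nu$ is paired with only $O(1)$ directions $\nu'$, so the arithmetic--geometric mean inequality gives the pointwise bound $|\Upsilon^{\text{diag}}f|\lesssim\sum_\nu|P_\nu f|^2$. Thus it suffices to bound $\lpnorm{\sum_\nu|P_\nu f|^2}{p_c/2}{\T{n}}=\big\|\big(\sum_\nu|P_\nu f|^2\big)^{1/2}\big\|_{L^{p_c}(\T{n})}^2$ by $(\la\delta)^{2/p_c}\lpnorm{f}{2}{\T{n}}^2$, a square-function-type estimate I would establish following \cite{BHS1}: replace each $P_\nu$ by an operator with a smooth symbol adapted to the cap $A_{\la,\delta}^\nu$ (standard, with rapidly decaying error), insert the microlocal cutoffs $Q_\nu$ whose kernels concentrate along the geodesics in the direction $\nu$, and invoke the fixed-window bounds for $Q_\nu\circ P_{\la,1}$ recorded in the introduction. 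The simplification over \cite{BHS1} is that, because $P_{\la,\delta}$ is now a genuine Fourier multiplier, the pieces $P_\nu f$ are mutually $L^2$-orthogonal, so the summation over $\nu$ can be performed in $l^2$ using $\sum_\nu\lpnorm{P_\nu f}{2}{\T{n}}^2=\lpnorm{f}{2}{\T{n}}^2$ rather than in the more delicate $l^{p_c}$; this makes the diagonal estimate a comparatively routine adaptation.

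\textbf{The far term.} Here $|\nu-\nu'|\ge C\theta_0$, and the first move is to decompose dyadically in the angular separation, $\Upsilon^{\text{far}}=\sum_{k:\,2^k\theta_0\lesssim 1}\Upsilon^{(k)}$, where $\Upsilon^{(k)}$ collects the pairs with $|\nu-\nu'|\sim\theta_k\coloneq 2^k\theta_0$. For fixed $k$ I would group the $\theta_0$-caps into $\theta_k$-super-caps, so that the pairs in $\Upsilon^{(k)}$ arise from well-separated (transversal) pairs of super-caps, for which the exponent $p_c/2$ lies strictly above the critical bilinear exponent $\tfrac{n+3}{n+1}$ in every dimension; thus there is a polynomial amount of room. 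I would spend this room by combining three ingredients: the Bourgain--Demeter $l^2$-decoupling theorem, used at intermediate scales to organize the constituent $\theta_0$-caps (at the cost of an $O_\epsilon(\la^\epsilon)$ factor that the bilinear slack will absorb); a loss-free transversal bilinear estimate for the product of two well-separated caps, obtained by an explicit count of the lattice points of $L$ in the sumset of the two caps, equivalently of the number of representations of a lattice point as a sum of one point from each cap (the ``cap counting'' and ``explicit Fourier series'' inputs, available precisely because we now work with a multiplier on a torus); and $L^2$-orthogonality to reassemble the pieces. A genuine gain at each separation scale $\theta_k$ is what makes the sum over the $O(\log\la)$ dyadic scales converge without a logarithmic loss.

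\textbf{Main obstacle.} I expect the far term to be the crux, and within it the loss-free bilinear/lattice-point step carried out \emph{uniformly} down to windows of width $\la^{-1+\kappa}$. One must verify that the arithmetic of a general, possibly irrational, lattice $L^*$ does not corrupt the cap count, that the super-critical slack and the per-scale transversality gain together defeat both the decoupling $\la^\epsilon$ and the $O(\log\la)$ dyadic scales, and that every implicit constant remains independent of $\la$ --- while necessarily being allowed to degenerate as $\kappa\to 0$, since that limit is the open discrete restriction conjecture for $\R{n}/\Z{n}$. The diagonal term, by contrast, should be a direct transcription of the microlocal argument of \cite{BHS1}, streamlined by the $l^2$-orthogonality noted above.
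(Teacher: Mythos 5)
Your plan matches the paper's overall architecture: you correctly get the first inequality from $(P_{\la,\delta}f)^2=\Upsilon^{\text{diag}}f+\Upsilon^{\text{far}}f$ and the triangle inequality, you reduce the diagonal term to $\sum_\nu\|P_\nu f\|_{p_c}^2$ via $O(1)$ overlap and treat it with the $Q_\nu$ microlocal cutoffs from \cite{BHS1} plus $l^2$ orthogonality (exactly what the paper does in Proposition \ref{diagGoalProp}), and you identify the correct three far-term ingredients: decoupling, the explicit $L^4$ estimate via lattice-point counting in sumsets (Theorem \ref{fseries}), and the cap-count lemma of Germain--Myerson. The far-term section, however, obscures the one organizing idea the paper actually pivots on, and the mechanism you give for defeating the decoupling $\la^{\epsilon}$ loss would not, as stated, work.

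The paper does not absorb the $\la^\epsilon$ by invoking room above the bilinear critical exponent $\frac{n+3}{n+1}$. Instead, after the Whitney decomposition, it splits the caps $\omega$ covering $A_{\la,\delta}$ into \emph{good} (those with $\#\omega\lesssim(\la\delta)^{\frac{n-1}{2}-\eta}$) and \emph{bad} (the rest), and runs two entirely separate arguments. For products involving a good cap, decoupling is applied and the $\la^\epsilon$ loss is beaten because the per-cap count bound yields an extra factor $(\la\delta)^{-\eta(\frac12-\frac1{p_c})}$; one then chooses $\epsilon$ small relative to $\eta$ (this is Proposition \ref{firstThreeTermsBoundsDecplProp}). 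No decoupling is used on bad-bad products at all: there, the transversality of the two sectors forces $S(a,a')\lesssim(\la\delta)^{\frac{n-2}{2}}$ in the $L^4$ additive-energy bound, Theorem \ref{badCapBoundThrm} gives $\#C_{\text{bad}}\lesssim(\la\delta)^{\eta n}$ and a crude triangle inequality over the bad pairs closes the estimate (Proposition \ref{badbadprop}), with $\eta$ then chosen small enough that $\eta n$ doesn't eat the power gain. Decoupling's $\la^\epsilon$ is thus never applied where the bilinear transversality helps, and the transversality is never invoked where decoupling is applied, so a ``bilinear slack absorbs the $\epsilon$-loss'' step has nothing to attach to. Without making the good/bad dichotomy explicit — and in particular without the observation that $\eta>0$ gives a strict power gain on the good caps and that the bad caps are $(\la\delta)^{O(\eta)}$ in number — your sketch does not yet identify a scale-by-scale polynomial gain $\sigma>0$ in Proposition \ref{arcResult}, and hence cannot sum the $O(\log\la)$ dyadic scales without loss. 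Filling in that dichotomy would bring the proposal in line with the paper's proof.
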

The first inequality follows from our previous discussion and the triangle inequality. Interpolation with the trivial estimates shows that this implies Theorem 1.3. The bound for the far terms is the content of Proposition \ref{farGoalProp}. Section \ref{capCountingSection} will establish some preparatory results to aid in that proof. The diagonal terms will be handled by Proposition \ref{diagGoalProp} which relies on microlocal estimates which are proved in Section \ref{microProofsSection}. 

\section{Cap Counting}\label{capCountingSection}
To prove their result with $\epsilon$-loss, Germain--Myerson used the decoupling theorem of Bourgain--Demeter which necessitates covering the annulus $A_{\la,\delta}$ with caps of a specific geometry. Decoupling will also be used in our proof. In this section we shall establish the relevant properties of the caps, present an abridged version of the proof with $\epsilon$-loss using decoupling, and discuss how decoupling will be utilized in our proof of sharp results.

\subsection{Understanding caps}
In this context, a \textbf{cap} will refer to a box in $\R{n}$ whose dimensions are given by $c_1(\la\delta)^{1/2}\times ... \times c_{n-1}(\la\delta)^{1/2}\times c_n \delta$. The constants $c_i$ will not depend on $\la$, will have no bearing on our arguments, and are oftentimes suppressed. Caps are allowed to have arbitrary center and orientation in $\R{n}$. More often in applications of decoupling caps are of the form $\sim R^{-1}\times ... \times R^{-1}\times R^{-2}$ for some large positive $R$. This discrepancy is a result of our scaling as we are considering $\la S^{n-1}$ not $S^{n-1}$. A suitable change of variables aligns the two perspectives.

Let $\Omega = \{\omega\}$ be a collection of finitely disjoint caps $\omega$ that cover the annulus $A_{\la,\delta}$. That is, $A_{\la,\delta}\subset \cup_{\omega\in \Omega} \omega$. Also, define

\begin{equation*}
    \#\omega \coloneq \#(\omega \cap 2\pi L).
\end{equation*}
This is the number of lattice point contained in a cap $\omega$. The factor of $2\pi$ is because of our normalization of Fourier series. When referring to lattice points in a cap we mean the lattice $2\pi L$. Later, we will relate the number of lattice points in a cap to the $L^p$ norm of $P_\omega f$, the Fourier projection onto frequencies contained in the cap $\omega$. As written the caps possibly overlap. To deal with this we employ a partition of unity. Let $\chi_\omega$ be smooth functions such that $0\leq \chi_\omega \leq 1$, $\supp{\chi_\omega}\subset \omega,$ and $\sum_{\omega\in \Omega} \chi_\omega = 1$ on $A_{\la,\delta}$. Initially define 

\begin{equation}
    P_{\omega}'f \coloneq \sum_{k_L\in \omega}\chi_\omega(2\pi k_L)\hat{f}(k_L)e_{k_L}(x),
\end{equation}
and

\begin{equation}\label{PomegeDefEqn}
    P_{\omega}f \coloneq P_{\la,\delta}P_{\omega}'f.
\end{equation}
We clearly have $P_{\la,\delta} = \sum_{\omega\in \omega}P_{\omega}$.

While we include the projection $P_{\la,\delta}$ in the definition of $P_{\omega}$ to make this a true decomposition of our operator, it will often be more productive to think of $P_\omega f$ as having Fourier support in the lattice points contained in $\omega$ not $\omega\cap A_{\la,\delta}$. The reason for this is the convexity of the caps $\omega$ and the easier geometric interpretation of it being a box in $\R{n}$. In doing so we will discuss estimates for lattice points in the cap $\omega$ which is clearly an upper bound for the number of lattice points in $\omega\cap A_{\la,\delta}$. As our estimates involving the $L^p$ norm of $P_\omega f$ will be bounded by the total number of lattice points in its Fourier support so bounding by the number of lattice points in $\omega$ instead leads to worse results that will still be sufficient.

Returning to the properties of the caps, the lattice points in $\omega$ have a dimension as a vector space (after shifting the cap to the origin, consider the span over $\R{}$ of the lattice points contained in the cap). We will denote this dimension $d_\omega$. This is important as by the log convexity of $L^p$ norms we have the following inequality for caps $\omega$.

\begin{equation}\label{numberCapBound}
    \lpnorm{P_\omega f}{p}{\T{n}}\lesssim (\#\omega)^{\frac{1}{2}-\frac{1}{p}}\lpnorm{P_\omega f}{2}{\T{n}},\, p\geq 2.
\end{equation}
Here we used the trivial $L^2\rightarrow L^\infty$ and $L^2\rightarrow L^2$ bounds. Recall that $\omega$ has a short side of length $\delta$. This means that if $d_\omega = n$ we have $\#\omega \lesssim \delta(\la\delta)^{\frac{n-1}{2}}$ which is the volume of the cap. We expect this bound to hold for most caps.

If $d_\omega = n-1$, we may have the worst possible bound of  $\#\omega\lesssim (\la\delta)^{\frac{n-1}{2}}$. Additionally, for such an $\omega$, one may check that using \eqref{numberCapBound} with this bound on the number of lattice points gives

\begin{equation}\label{maxCapBound}
    \lpnorm{P_\omega f}{p}{\T{n}}\lesssim (\la\delta)^{\frac{n-1}{2}(\frac{1}{2}-\frac{1}{p})}\lpnorm{P_\omega f}{2}{\T{n}},\, p\geq 2.
\end{equation}
Which aligns with the conjectured bound at $p_c$. The saturating example, the Knapp example in this context, is a function whose frequency support is contained in a single cap $\omega$ that contains the maximum number of lattice points. This is the \say{geodesic focusing case} as in physical space, such a function is concentrated in the neighborhood of a line which is a geodesic on the (flat) torus.

The result with an extra $O_\epsilon(\la^\epsilon)$ factor follows from the bound in \eqref{maxCapBound}, which clearly holds for any cap, after an application of the decoupling theorem. We shall later show that there are not many caps with the maximum number of lattice points. Despite using the bound $\#\omega\lesssim (\la\delta)^{\frac{n-1}{2}}$ for all caps in the decoupling argument, the only loss comes from decoupling itself because such a bound is the saturating example and because of orthogonality. Let us see how decoupling \say{almost} proves sharp results.

\subsection{The almost proof using decoupling} We will now introduce the decoupling theorem and show how it was used to prove the result with $\epsilon$-loss. 

\begin{theorem}[Discrete decoupling for the sphere, \cite{BD1}]\label{honestDecpl}
    Let $\Omega$ be a finitely disjoint covering of $\la S^{n-1}$ by $c_1(\la\delta)^{1/2}\times ... \times c_{n-1}(\la\delta)^{1/2} \times c_n\delta$ caps. Then for any 1-separated set $\xi_\alpha\subset \la S^{n-1}$ and sequence $\{a_\alpha\}$ it follows that for any fixed $\epsilon>0$,

    \begin{equation}
        \lpnorms{\sum_{\alpha}a_\alpha e(x\cdot \xi_\alpha)}{p_c}{B_{\delta^{-1}}}\lesssim_\epsilon \la^{\epsilon} \Bigl(\sum_{\omega\in\Omega} \lpnorms{\sum_{\alpha:\, \xi_\alpha \in \omega}a_\alpha e(x\cdot \xi_\alpha)}{p_c}{w_{B_{\delta^{-1}}}}^2\Bigr)^{1/2},
    \end{equation}
    where $B_{\delta^{-1}}$ is a spatial ball of radius $\delta^{-1}$ and $L^{p_c}(w_{B_{\delta^{-1}}})$ is a weighted norm.
\end{theorem}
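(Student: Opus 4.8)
The statement is the standard exponential‑sum reformulation of the $\ell^2$‑decoupling theorem for the sphere, and the plan is to deduce it from the continuous $\ell^2L^{p_c}$‑decoupling inequality for hypersurfaces of nonvanishing Gaussian curvature, which is the genuine content of \cite{BD1}. First I would pass from the exponential sum on $B_{\delta^{-1}}$ to an honest function whose Fourier transform lives in a $\delta$‑neighborhood of $\la S^{n-1}$: fix a Schwartz $\phi$ with $\phi\ge 1$ on $B_{\delta^{-1}}$ and $\hat\phi$ supported in a ball of radius $c\delta$, with $c=c(n)$ small. Writing $f(x)=\sum_\alpha a_\alpha e(x\cdot\xi_\alpha)$, one has $\lpn{f}{L^{p_c}(B_{\delta^{-1}})}\le \lpn{\phi f}{L^{p_c}(\R{n})}$, while $\widehat{\phi f}=\sum_\alpha a_\alpha\,\hat\phi(\cdot-\xi_\alpha)$ is supported in the $c\delta$‑neighborhood of $\{\xi_\alpha\}$, hence in $N_{c\delta}(\la S^{n-1})$.

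Next I would rescale and apply continuous decoupling. The dilation $\xi\mapsto\la^{-1}\xi$ takes $N_{c\delta}(\la S^{n-1})$ to $N_{c\delta/\la}(S^{n-1})$ and takes the caps $\omega$ of dimensions $(\la\delta)^{1/2}\times\cdots\times(\la\delta)^{1/2}\times\delta$ to the standard $\sigma^{1/2}\times\cdots\times\sigma^{1/2}\times\sigma$ plates at scale $\sigma=\delta/\la$; dually it takes $B_{\delta^{-1}}$ to $B_{\sigma^{-1}}$, exactly the spatial scale at which the Bourgain--Demeter theorem is posed. Covering $S^{n-1}$ by $O_n(1)$ pieces on each of which it is a graph, applying the graph form of the $\ell^2L^{p_c}$‑decoupling inequality on each (with its intrinsic $O_\epsilon(\sigma^{-\epsilon})$ loss, and $\sigma^{-1}=\la/\delta\le\la^2$, so this is $O_\epsilon(\la^\epsilon)$), and recombining the $O_n(1)$ pieces by the triangle inequality in $L^{p_c}$ and in $\ell^2$, would yield $\lpn{\phi f}{L^{p_c}(\R{n})}\lesssim_\epsilon\la^\epsilon\bigl(\sum_{\omega\in\Omega}\lpn{(\phi f)_\omega}{L^{p_c}(\R{n})}^2\bigr)^{1/2}$, where $(\phi f)_\omega$ is the Fourier restriction of $\phi f$ to a fixed dilate of $\omega$.

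Finally I would match the cap pieces with the exponential sub‑sums and produce the weighted norm. Undoing the dilation, $(\phi f)_\omega$ has Fourier support in an $O(1)$‑enlargement $\tilde\omega$ of $\omega$; since the $\xi_\alpha$ are $1$‑separated and $\hat\phi$ lives at scale $c\delta\ll(\la\delta)^{1/2}$, for $c$ small the points of $\{\xi_\alpha\}$ meeting $\tilde\omega$ are those lying in $\omega$ up to bounded overlap, and $(\phi f)_\omega=\phi\cdot\sum_{\alpha:\,\xi_\alpha\in\omega}a_\alpha e(x\cdot\xi_\alpha)$ up to a rapidly decaying tail. Bounding $|\phi|^{p_c}$ by the standard weight $w_{B_{\delta^{-1}}}$ and absorbing the bounded overlap into the implicit constant converts the right side into $\la^\epsilon\bigl(\sum_{\omega}\lpn{\sum_{\alpha:\,\xi_\alpha\in\omega}a_\alpha e(x\cdot\xi_\alpha)}{L^{p_c}(w_{B_{\delta^{-1}}})}^2\bigr)^{1/2}$, which is the assertion.

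The one delicate point, and the only place any real care is needed, is the compatibility of the two thickenings — the $\delta$‑width of $N_\delta(\la S^{n-1})$ and the $c\delta$‑support of $\hat\phi$ — with the exact cap family $\Omega$ in the statement: one must verify that a $1$‑separated subset of $\la S^{n-1}$, fattened to scale $\delta$, distributes over a bounded‑overlap family of $O(1)$‑dilated caps, so that passing between $\Omega$ and its dilate, and between the honest ball on the left and the global weighted norms on the right, costs only an absolute constant. This, together with the $\epsilon$‑loss inherent to decoupling at $p_c$ (which, by Bourgain's example in \cite{B1}, is unavoidable), accounts for the $\la^\epsilon$ factor; the curvature input is quoted wholesale from \cite{BD1} and nothing else here is deep.
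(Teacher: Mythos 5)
The paper does not prove this theorem: it is quoted as a black box from \cite{BD1}, with \cite{D1} cited for the precise weighted-norm formulation and \cite{H1}, \cite{GM1} for the verification that it applies on the torus. There is therefore no proof in the paper to compare your proposal against. That said, your derivation from the continuous $\ell^2L^{p_c}$ decoupling theorem is the standard one and is correct. The rescaling $\xi\mapsto\la^{-1}\xi$ sends the $(\la\delta)^{1/2}\times\cdots\times(\la\delta)^{1/2}\times\delta$ caps to $\sigma^{1/2}\times\cdots\times\sigma^{1/2}\times\sigma$ caps with $\sigma=\delta/\la$, and dually $B_{\delta^{-1}}\mapsto B_{\sigma^{-1}}$, which is exactly the Bourgain--Demeter normalization (frequency thickness $\sigma$, spatial scale $\sigma^{-1}$); decomposing $S^{n-1}$ into $O_n(1)$ graph pieces and converting the $\sigma^{-\epsilon}$ loss into a $\la^\epsilon$ loss via $\sigma^{-1}=\la/\delta\le\la^2$ (using $\delta\ge\la^{-1}$, always in force here) are both fine. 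The delicate step you flag is the right one, and it works because the $c\delta$-support of $\hat\phi$ is far smaller than the $(\la\delta)^{1/2}$ long sides of the caps once $\la\delta\ge 1$, so an $O(1)$-dilation of each cap captures the relevant $\xi_\alpha$ with bounded overlap. This is essentially the argument written out in \cite{D1} (and, for the torus reduction specifically, in the proof of Corollary 9 of \cite{H1}, which the paper points to).
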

It is known that there must be a $O_\epsilon(\la^\epsilon)$ loss in the above theorem, see \cite{D1} for a discussion of this and a precise definition of the weighted norm appearing on the right hand side. That the above theorem can be applied in our situation is known and was used by Hickman \cite{H1} and Germain--Myerson \cite{GM1} to make progress on Conjecture \ref{conj}.

The outline for applying this result to our situation is as follows. We will always take $\xi_\alpha$ to be coming from a lattice, which allows us to interpret the exponential sums as Fourier series. The localization at scale $\delta^{-1}$ provides uncertainty at scale $\delta$ in Fourier space, so we can consider points $\xi_\alpha$ that are contained in the annulus $A_{\la,\delta}$. This is done rigorously in the proof of Corollary 9 in \cite{H1}. Fourier series are periodic, and so the spatial norms can be replaced by an integral over the fundamental domain of the lattice which can be interpreted as $\T{n}$. The loss $\la^\epsilon$ ostensibly depends on $\la$ and $\delta$, but the exact power is subsumed by the $\epsilon$ exponent and so this dependency is suppressed. This justifies the following statement which is what we shall apply in our proofs.

\begin{proposition}\label{decpl}
    Let $\Omega$ be a collection of $c_1(\delta\la)^{1/2} \times ... \times c_{n-1}(\delta\la)^{1/2}\times c_n\delta$ finitely disjoint caps covering the annulus $A_{\la,\delta}$.  Let $P_{\la,\delta}$ be the Fourier projection operator onto $A_{\la,\delta}$ and $P_{\omega}$ be defined as in \eqref{PomegeDefEqn}. We then have

    \begin{equation}
        \lpnorm{P_{\la,\delta}f}{p_c}{\mathbb{T}^n} \lesssim_\epsilon \la^{\epsilon}\Bigl(\sum_{\omega\in \Omega}\lpnorm{P_{\omega}f}{p_c}{\mathbb{T}^n}^2 \Bigr)^{1/2}.
    \end{equation}
\end{proposition}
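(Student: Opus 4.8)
The plan is to obtain Proposition \ref{decpl} as a transference of the discrete decoupling theorem (Theorem \ref{honestDecpl}) to the periodic setting; all of the required manipulations are standard and are carried out in the proof of Corollary 9 of \cite{H1} and in \cite{GM1}, so once Theorem \ref{honestDecpl} is granted the argument is essentially bookkeeping. Since all estimates are understood for $\la$ large and $\delta$ lies in the range $\la^{-1+\kappa} < \delta \leq 1$, we have $1 \leq \delta^{-1} \leq \la^{1-\kappa}$, and we may assume $\delta$ is below a fixed small constant so that the ball $B_{\delta^{-1}}$ is large compared to a fundamental domain of $L^*$.

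First I would record a periodization lemma: if $g$ is a trigonometric polynomial with frequencies in $2\pi L$ — which covers $g = P_{\la,\delta}f$ (frequencies in $2\pi L \cap A_{\la,\delta}$) and $g = P_\omega f$ (frequencies in $2\pi L \cap \omega$) — then for every $R \gtrsim 1$,
\[
    \lpnorm{g}{p_c}{\mathbb{T}^n} \sim R^{-n/p_c}\lpnorm{g}{p_c}{B_R},
\]
the implicit constants depending only on $n$, $p_c$, $L^*$; this follows by tiling $B_R$ with translates of a fundamental domain of $L^*$, the boundary translates contributing only a relative $O(R^{-1})$. One checks in the same way that for such $g$ the weighted norm $\lpnorm{g}{p_c}{w_{B_R}}$ appearing in Theorem \ref{honestDecpl} is comparable to $\lpnorm{g}{p_c}{B_{CR}}$ for an absolute $C$, the rapidly decaying tails of the weight being harmless (cf. \cite{D1}). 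Next I would apply Theorem \ref{honestDecpl} on $B_{\delta^{-1}}$: enumerate the frequencies of $P_{\la,\delta}f$ as a $c(L^*)$-separated — hence, after splitting into $O(1)$ subfamilies, $1$-separated — subset $\{\xi_\alpha\}$ of $2\pi L \cap A_{\la,\delta}$, use that the localization at spatial scale $\delta^{-1}$ permits the points $\xi_\alpha$ to be taken in the $\delta$-annulus rather than exactly on $\la S^{n-1}$ (radially projecting each $\xi_\alpha$ onto the sphere alters the phase by only $O(1)$ over $B_{\delta^{-1}}$, which is absorbed into the weight — this is the content of \cite[Cor.\ 9]{H1}), and identify the $\omega$-grouped pieces of the decoupling with the $P_\omega f$ using the partition of unity $\{\chi_\omega\}$ and the bounded overlap of the caps. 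This produces
\[
    \lpnorm{P_{\la,\delta}f}{p_c}{B_{\delta^{-1}}} \lesssim_\epsilon \la^{\epsilon}\Bigl(\sum_{\omega \in \Omega}\lpnorm{P_\omega f}{p_c}{w_{B_{\delta^{-1}}}}^2\Bigr)^{1/2},
\]
and then one reverses the periodization: the factors $\delta^{n/p_c}$ introduced on the two sides cancel, and since the loss in Theorem \ref{honestDecpl} is at worst a fixed power of $\delta^{-1} \leq \la$, it is absorbed into $\la^{\epsilon}$ after relabelling $\epsilon$, giving the asserted bound.

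The only substantial input is Theorem \ref{honestDecpl} itself, which we take as a black box. Of the remaining steps, the one I would expect to draw the most scrutiny is the passage from ``frequencies on $\la S^{n-1}$'', as Theorem \ref{honestDecpl} is literally stated, to ``lattice points in the annulus $A_{\la,\delta}$'', together with the reconciliation of the weighted ball norms with norms on $\mathbb{T}^n$; both, however, are routine and are carried out in detail in \cite{H1} and \cite{GM1}, so no genuinely new difficulty arises.
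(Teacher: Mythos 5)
Your argument reproduces, in somewhat more detail, the transference the paper itself invokes: the paper does not give a formal proof of Proposition~\ref{decpl} but outlines exactly these steps (interpret the exponential sum as a Fourier series over $2\pi L$, use the $\delta^{-1}$-localization to pass from $\la S^{n-1}$ to the annulus $A_{\la,\delta}$, periodize the spatial norms, absorb the $(\la/\delta)^{\epsilon}$ into $\la^{\epsilon}$) and then cites Corollary~9 of~\cite{H1} for the rigorous version, which is what you are unpacking. The approach is the same and the proof is correct; the only cosmetic quibble is the phrase about splitting a $c(L^*)$-separated set into $O(1)$ many $1$-separated subfamilies, which is unnecessary since one can simply rescale (the $1$-separation normalization in Theorem~\ref{honestDecpl} is not essential), but this does not affect the argument.
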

This is essentially the form reached in Corollary 9 in \cite{H1}. Combined with \eqref{numberCapBound} this immediately gives the result as

\begin{align}\label{decplGoodCapEqn}
    \begin{split}
    \lpnorm{P_{\la,\delta}f}{p_c}{\mathbb{T}^n} &\lesssim_\epsilon \la^{\epsilon}\Bigl(\sum_{\omega\in \Omega}\lpnorm{P_{\omega}f}{p_c}{\mathbb{T}^n}^2 \Bigr)^{1/2}\\
    &\lesssim_\epsilon \la^{\epsilon}\sup_{\omega}(\#\omega)^{\frac{1}{2}-\frac{1}{p_c}}\Bigl(\sum_{\omega\in \Omega}\lpnorm{P_{\omega}f}{2}{\mathbb{T}^n}^2 \Bigr)^{1/2}\\
    &\lesssim_\epsilon \la^\epsilon (\la\delta)^{\frac{n-1}{2}(\frac{1}{2}-\frac{1}{p_c})}\lpn{f}{2}.
    \end{split}
\end{align}
The above argument shows that while the majority of caps contain a number of lattice points less than $(\la\delta)^{\frac{n-1}{2}}$, the caps with a full number of lattice points dominate the norm. The above argument is also sharp up to the $O_\epsilon(\la^\epsilon)$ factor which comes from the application of decoupling \textit{not} the lattice point bound for an $\omega$. The $O_\epsilon(\la^\epsilon)$ is mandatory in the sense that an abstract application of decoupling must involve at least a $\log(\la)^c$ loss so the above argument cannot prove sharp bounds by an improved general decoupling estimate. 

Decoupling, however, will be useful for us because of its ability to handle a large number of caps. We will label caps that contain near the maximum number of lattice points \say{bad} and ones that contain less \say{good}. Specifically, for a fixed $\eta>0$, define

\begin{align}
    &C'_{\text{good}} = \{\omega: \#\omega \lesssim (\la\delta)^{\frac{n-1}{2}-\eta}\},\\
    &C'_{\text{bad}} = \{\omega: \#\omega \gtrsim (\la\delta)^{\frac{n-1}{2}-\eta}\}.
\end{align}

Define $P_{\text{good}}$ and $P_{\text{bad}}$ in an analogous way to $P_\omega$ using the same partition of unity. Clearly

\begin{equation}
    P_{\la,\delta} = P_{\text{good}} + P_{\text{bad}}.
\end{equation}
This is motivated the above argument as all good caps can be handled by an application of decoupling. We have by \eqref{decplGoodCapEqn}

\begin{equation}\label{goodCapBoundEqn}
    \lpnorm{P_{\text{good}}f}{p_c}{\mathbb{T}^n}\lesssim_\epsilon\la^{\epsilon} (\la\delta)^{(\frac{n-1}{2}-\eta)(\frac{1}{2}-\frac{1}{p_c})}\lpn{f}{2}.
\end{equation}
As $-\eta(\frac{1}{2}-\frac{1}{p_c})$ is negative for any choice of positive $\eta$ we can select $\epsilon$ small enough so that 

\begin{equation}
    \lpnorm{P_{\text{good}}f}{p_c}{\mathbb{T}^n}\lesssim_\eta (\la\delta)^{\frac{n-1}{2}(\frac{1}{2}-\frac{1}{p_c})}\lpn{f}{2}.
\end{equation}
And so decoupling can be used to show the good caps obey sharp bounds. This handles the vast majority of caps. The rest of our argument will be to bound the projection onto the bad caps. To do so we will need a quantitative bound on the number of such caps which is the subject of the next subsection and will the bad caps to be handled in Section \ref{farTermsSection}.

\begin{remark}
    The above bound for the good caps is included for illustrative purposes. In our full argument we employ the decomposition described in Section 2. We will then only use the this section's results involving caps on the far terms. The argument also works by first estimating the good caps as has been described, then using a bilinear decomposition only on the remaining bad caps. While this may lead to a cleaner exposition, it is also strictly weaker and so we have chosen to argue in this order.
\end{remark}

\subsection{A result about bad caps}
Our goal is to bound the number of bad caps. The threshold for a cap to be considered bad is controlled by the parameter $\eta$ which we may select freely as long as it is positive and does not depend on $\la$. It is allowed to depend on other parameters in the problem such as $n$ and $L^*$.

As the caps $\omega$ have $n-1$ long sides of length $(\la\delta)^{\frac{n-1}{2}}$ and $1$ short side of length $\delta$, the caps such that $d_\omega = n$ cannot have near the maximum number of lattice points. The caps such that $d_\omega \leq n-2$ obviously do not near the maximum number of lattice points as the separation distance between any two points is at least $O(1)$. Therefore, by a sufficiently small choice of $\eta$, every bad cap has $d_\omega=n-1$.

To count these caps we will use Theorem 4.1 from \cite{GM1}. In that paper, Germain--Myerson chose to analyze the equivalent problem of projecting onto a $\delta$ neighborhood of a quadratic form $Q(x)$ with the lattice fixed to be $\Z{n}$. We fix our quadratic form to be given by the sphere and let the lattice $L$ vary. This is necessitated by the use of the microlocal operators later in the paper. The following proposition states that the number of bad caps can be made small by taking $\eta$ to be sufficiently small.

We will prove the following theorem.

\begin{theorem}\label{badCapBoundThrm}
    Let $C_{\text{bad}}$ be the collection of all caps such that $\#\omega\gtrsim (\la\delta)^{\frac{n-1}{2}-\eta}$ with $\eta$ chosen small enough such that $d_\omega = n-1$ for every $\omega\in C_{\text{bad}}$. We then have

    \begin{equation}\label{badCapBoundEqn}
        \#C_{\text{bad}}\lesssim (\la\delta)^{\eta n}.
    \end{equation}
\end{theorem}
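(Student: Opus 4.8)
The plan is to reduce the cap-counting problem to a lattice-point count on a single fixed "reference cap" and then sum over the possible positions. Since every bad cap $\omega$ has $d_\omega = n-1$, the $\gtrsim (\la\delta)^{(n-1)/2 - \eta}$ lattice points it contains all lie in an affine hyperplane $H_\omega$; because the short side of $\omega$ has length $\delta$, these points actually lie in the $\delta$-neighborhood of $H_\omega$ inside the annulus $A_{\la,\delta}$. So a bad cap is essentially detected by a hyperplane slicing $\la S^{n-1}$ that captures many lattice points of $2\pi L$ in its $\delta$-thickening. The first step is therefore to set up a bijection (up to $O(1)$ overlap) between bad caps and such "rich" hyperplane slabs. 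The key point is that the caps are finitely overlapping, so a bound on the number of rich slabs transfers directly to $\#C_{\text{bad}}$.

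Next I would invoke Theorem 4.1 of \cite{GM1} (the explicit lattice-point lemma referenced in the statement). The mechanism there controls, for a quadratic form $Q$ — here taken to be $|\xi|^2$ so that $\la S^{n-1}$ is the level set and $L$ is the varying lattice — the number of lattice points in a $\delta$-neighborhood of $\{Q = \la^2\}$ lying in a given affine subspace, together with a dichotomy: either a slab is "poor" (few lattice points, well below the maximum), or the hyperplane is forced into a very rigid position determined by the arithmetic of $L$. Concretely, a hyperplane carrying $\gtrsim (\la\delta)^{(n-1)/2-\eta}$ points of $2\pi L$ on a cap-sized piece of the sphere must be a rational hyperplane of bounded complexity, and the number of admissible such hyperplanes through (or near) the annulus is at most $\lesssim (\la\delta)^{O(\eta)}$ — the gain in the exponent of $\eta$ coming from the fact that requiring $\eta$-many fewer points than the maximum still forces near-maximal richness, which only finitely (polynomially in $(\la\delta)^\eta$) many slab positions can achieve. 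Summing the $O(1)$ caps attached to each such slab, and tracking the exponent carefully, gives $\#C_{\text{bad}} \lesssim (\la\delta)^{\eta n}$.

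The main obstacle I expect is getting the exponent on the right-hand side of \eqref{badCapBoundEqn} to be exactly $(\la\delta)^{\eta n}$ rather than some larger $(\la\delta)^{C\eta}$: this requires applying the Germain--Myerson lattice-point estimate at the right scale and in the right normalization (their form is stated with lattice $\Z{n}$ and a general quadratic form, whereas we need it with fixed form $|\xi|^2$ and variable lattice $L$), and then being careful that the $\delta$-thickening of the hyperplane, the aspect ratio $(\la\delta)^{1/2}$ of the long sides of the cap, and the $\eta$-deficit in the lattice-point count all interact to produce precisely an exponent linear in $\eta$ with the stated constant $n$. A secondary technical point is the passage from "cap" to "hyperplane slab": one must check that a cap with $d_\omega = n-1$ and near-maximal $\#\omega$ really does have all its lattice points in a \emph{single} $O(\delta)$-slab (as opposed to several parallel ones), which follows from the short side of $\omega$ having length exactly $\sim\delta$ and the long sides having length $\sim(\la\delta)^{1/2} \ll \la$, so that the relevant piece of $\la S^{n-1}$ is flat to within $\delta$. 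Once these normalization and geometry bookkeeping issues are handled, the bound is a direct consequence of the cited lemma. Ultimately this is why $\eta$ is chosen freely and small: it both forces $d_\omega = n-1$ and makes $(\la\delta)^{\eta n}$ a negligible loss when combined with the negative power $-\eta(\tfrac{1}{2}-\tfrac{1}{p_c})$ gained elsewhere.
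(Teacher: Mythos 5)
Your proposal identifies the right key lemma (Germain--Myerson Theorem 4.1) and correctly flags the normalization mismatch -- they work with the fixed lattice $\Z{n}$ and a general quadratic form, while we have the fixed quadratic form $|\xi|^2$ and a general lattice $L$ -- as the thing that needs to be resolved. That is indeed the entire content of the paper's proof, and it is resolved by a single linear change of variables: let $M$ be the (fixed, $\la$-independent) invertible linear map with $M(L)=\Z{n}$, apply it, and observe that $A_{\la,\delta}$ becomes a $c_M\delta$-neighborhood of a quadratic-form level set, while each cap $\omega$ is carried into a comparable box $\theta'(\omega)$ of the right dimensions (since $M$ is linear and fixed). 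Lattice point counts are preserved exactly, the $\theta(\omega)=\theta'(\omega)\cap S_{\la,\delta}$ remain almost disjoint, and Lemma~\ref{GMbadCapthrm} applies directly.

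Two things in your proposal deserve pushback. First, the detour through ``rich hyperplane slabs'' and a bijection with bad caps is unnecessary: GM's Theorem 4.1, in the form quoted as Lemma~\ref{GMbadCapthrm}, already counts caps, not hyperplanes, so you never need to translate between the two objects. Second, your worry that the exponent might degrade to $(\la\delta)^{C\eta}$ with some $C\neq n$ is unfounded once you see that $M$ depends only on the lattice $L$ (not on $\la$ or $\delta$): it distorts cap dimensions and the window width only by fixed multiplicative constants, which get absorbed into the implicit constant, and the threshold $(\la\delta)^{\frac{n-1}{2}-\eta}$ and the conclusion $(\la\delta)^{\eta n}$ pass through unchanged. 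So your assessment that ``once the bookkeeping is handled, the bound is a direct consequence'' is correct, but the bookkeeping is in fact trivial -- the linear change of variables is the proof, and the exponent concern evaporates once you write it down.
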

This is essentially the same result as Germain--Myerson Theorem 4.1 and our proof assuming that theorem is largely a change of variables. We recall Theorem 4.1 now, with some slight alterations. In that paper different notation and conventions were used. We have translated their result to our notation but the content is the same. Additionally we will restrict to the range of $\eta$ such that $d_\omega = n-1$ for every bad $\omega$ which allows for the following simpler version of Theorem 4.1.\footnote{Specifically, \cite{GM1} indexed their collection of caps by $2^j$ by gathering caps containing $\sim (\la\delta)^{\frac{n-1}{2}}\delta 2^j$ lattice points. For the theorem to hold as stated we must take $2^j> K\delta^{-1}(\la\delta)^{-1/2}$ where $K$ is an absolute constant from that paper. For the same condition to be satisfied by our bad caps we need $\delta^{-1}(\la\delta)^{-\eta}>2^j> K\delta^{-1}(\la\delta)^{-1/2}$ or $\eta<\frac{1}{2}$. This is our first condition on $\eta$.} We have

\begin{lemma}[Theorem 4.1, \cite{GM1}]\label{GMbadCapthrm}
    Let $Q$ be a quadratic form and $S_{\la,\delta} =\{x\in \R{n}: |\sqrt{Q(x)}-\la|<\delta\}$. Let $\theta$ be almost disjoint caps intersected with $S_{\la,\delta}$ such that $S_{\la,\delta} = \cup_{\theta}\theta$. Let $C^Q_\text{bad} = \{\theta: \#\theta \gtrsim (\la\delta)^{\frac{n-1}{2}-\eta}\}$ for a sufficiently small $\eta$. Then

    \begin{equation}
        C^Q_\text{bad} \lesssim (\la\delta)^{\eta n}.
    \end{equation}
\end{lemma}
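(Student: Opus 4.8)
The plan is to turn the bad-cap count into a count of ``rich'' rational hyperplanes that are nearly tangent to the level surface $\Sigma=\{x:\sqrt{Q(x)}=\la\}$. We take $Q$ positive definite, so that $S_{\la,\delta}$ is a $\delta$-thickened ellipsoid whose ``radius'' and principal curvatures are comparable, with constants depending only on $Q$, to $\la$ and $\la^{-1}$, and we use that $\la\delta$ is large throughout. We first record the reduction already in the statement: once $\eta$ is below the footnote's threshold, every $\theta\in C^Q_{\text{bad}}$ has $d_\theta=n-1$. Indeed, a cap with $d_\theta=n$ has $\#\theta$ at most its volume $\lesssim\delta(\la\delta)^{(n-1)/2}$, and one with $d_\theta\le n-2$ has $\#\theta\lesssim(\la\delta)^{(n-2)/2}$ by the $O(1)$-separation of the lattice; for $\eta$ sufficiently small (and $\delta$ not too close to $1$, which is implicit in the footnote's constraint on $\eta$) neither quantity reaches the bad threshold $(\la\delta)^{(n-1)/2-\eta}$. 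So fix a bad cap $\theta$: its lattice points, translated to the origin, span an $(n-1)$-dimensional \emph{rational} subspace, hence they all lie on a single affine hyperplane $H_\theta=\{x:\langle m,x\rangle=b\}$ with $m$ a primitive (dual-)lattice vector, $b$ in the corresponding one-dimensional lattice, and $q:=|m|$.

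The heart of the matter is the bound $q\lesssim(\la\delta)^{\eta}$. The lattice points on $H_\theta$ form a coset of an $(n-1)$-dimensional lattice $\Lambda\subset m^{\perp}$ of covolume $\asymp q$. Because $d_\theta=n-1$, the set $\Lambda\cap\theta$ contains $n-1$ linearly independent difference vectors of length $\lesssim(\la\delta)^{1/2}$, so \emph{every} successive minimum of $\Lambda$ is $\lesssim(\la\delta)^{1/2}$; combined with Minkowski's second theorem, which says the product of the successive minima is $\asymp q$, this bounds the number of points of $\Lambda$ (hence of its coset) inside the box $\theta$, whose sidelengths are all $\lesssim(\la\delta)^{1/2}$, by $\lesssim(\la\delta)^{(n-1)/2}/q$. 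As all lattice points of $\theta$ lie on $H_\theta$ this gives $\#\theta\lesssim(\la\delta)^{(n-1)/2}/q$, and since $\theta$ is bad, $q\lesssim(\la\delta)^{\eta}$ follows.

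It then remains to count. Richness of $\Lambda\cap\theta$ forces its tangential diameter to be $\gtrsim(\la\delta)^{1/2}(\la\delta)^{-\eta}$, so that for the slab $\theta\cap H_\theta$ to fit inside a cap of radial width $\delta$ the hyperplane $H_\theta$ must make angle $\lesssim(\delta/\la)^{1/2}(\la\delta)^{\eta}$ with the tangent plane to $\Sigma$ at the center of $\theta$; equivalently, $m/|m|$ lies within $\sim(\delta/\la)^{1/2}(\la\delta)^{\eta}$ of the unit normal to $\Sigma$ there. Since the Gauss map of the ellipsoid $\Sigma$ is a bijection onto $S^{n-1}$ with derivative of size $\sim\la^{-1}$, the preimage of a spherical cap of angular radius $(\delta/\la)^{1/2}(\la\delta)^{\eta}$ has diameter $\lesssim(\la\delta)^{1/2}(\la\delta)^{\eta}$ on $\Sigma$, hence meets $\lesssim(\la\delta)^{\eta(n-1)}$ of the caps $\theta$ (plus the $O(1)$ caps near the opposite normal). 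Therefore the assignment $\theta\mapsto m_\theta$ maps $C^Q_{\text{bad}}$ into the set of primitive (dual-)lattice vectors of length $\lesssim(\la\delta)^{\eta}$, of which there are $\lesssim(\la\delta)^{\eta n}$, and every fiber of this map has size $\lesssim(\la\delta)^{\eta(n-1)}$. Hence $\#C^Q_{\text{bad}}\lesssim(\la\delta)^{\eta n}(\la\delta)^{\eta(n-1)}$, and since $\eta>0$ is a free parameter we may rename it, shrinking it by a factor depending only on $n$, to conclude $\#C^Q_{\text{bad}}\lesssim(\la\delta)^{\eta n}$.

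The main obstacle is the middle step: extracting a short normal from the richness of a cap. The naive ``box volume over covolume'' heuristic breaks down for skew sublattices, and the clean estimate $\#\theta\lesssim(\la\delta)^{(n-1)/2}/q$ genuinely requires controlling \emph{all} $n-1$ successive minima of $\Lambda$ by $(\la\delta)^{1/2}$ — which is exactly why the reduction $d_\theta=n-1$, and with it the smallness hypothesis on $\eta$ (the footnote's $\eta<\tfrac12$, together with $\delta$ bounded away from $1$), is indispensable rather than cosmetic. The remaining near-tangency and Gauss-map bookkeeping — including that once $m$ is fixed the constant term $b$ of $H_\theta$ is pinned down to $\lesssim1+q\delta\lesssim(\la\delta)^{\eta}$ values — only costs a further $(\la\delta)^{O(\eta)}$ and is harmless, but must be tracked carefully.
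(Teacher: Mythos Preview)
The paper does not itself prove this lemma: it is quoted verbatim as Theorem~4.1 of \cite{GM1}, and the only work the paper does in this subsection is the change-of-variables deduction of Theorem~\ref{badCapBoundThrm} from it. So there is no ``paper's proof'' to compare against. Your proposal instead supplies an independent argument along the standard geometry-of-numbers route (rational hyperplanes, successive minima, Gauss map), which is indeed the natural strategy and presumably close to what Germain--Myerson do.

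Two points in your sketch need attention. First, the near-tangency step is clean in dimension $n=2$ --- a line through $\gtrsim(\la\delta)^{1/2-\eta}$ separated collinear points in a $(\la\delta)^{1/2}\times\delta$ box must be nearly horizontal --- but your justification does not carry over to $n\ge 3$. ``Richness forces tangential diameter $\gtrsim(\la\delta)^{1/2-\eta}$'' only gives a large extent in \emph{some} tangential direction, whereas the angle bound you want requires extent specifically in the tilt direction (the tangential projection of $m$). A cap can contain many lattice points spread out in directions orthogonal to the tilt while the hyperplane is steeply inclined. One way to close this is to bound the $(n-1)$-volume of $H_\theta\cap\theta$ from above by $\lesssim(\la\delta)^{(n-2)/2}\cdot\delta/\sin\alpha$ and from below via the point count together with your control on all successive minima; this yields $\alpha\lesssim(\delta/\la)^{1/2}(\la\delta)^{O(\eta)}$ after some care with the case where $\delta/\sin\alpha$ drops below the first minimum.

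Second, the final ``rename $\eta$'' is not valid as written. The same $\eta$ appears in the definition of $C^Q_{\text{bad}}$ and in the claimed bound, so you cannot shrink one without shrinking the other. What your argument actually proves is $\#C^Q_{\text{bad}}(\eta)\lesssim(\la\delta)^{c_n\eta}$ with $c_n\ge 2n-1$, not $c_n=n$. This weaker exponent is entirely adequate for the paper's purposes --- in Proposition~\ref{badbadprop} one simply absorbs the dimensional factor into the choice of $\eta$ --- but you should state the conclusion you have rather than claim the lemma as written.
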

\begin{proof}[Proof of Theorem \ref{badCapBoundThrm} assuming Lemma \ref{GMbadCapthrm}] The $\theta$ are not caps by our convention but caps intersected with $S_{\la\,\delta}$. Let $M:\R{n}\rightarrow \R{n}$ be the invertible linear transformation such that $M(L) = \Z{n}$. We apply the change of variables given by $M(x_1,...,x_n) = (s_1,...,s_n)$.

In the $s$-variables our lattice is given by $\Z{n}$ and $A_{\la,\delta}$ is  the $c_M\delta$ neighborhood of  $\sqrt{Q}$ for some quadratic form $Q$. The constant $c_M$ in this instance comes from  $M$ which of course depends only on $L$ and not $\la$. Constants throughout this proof will share this dependence.

We have that $A_{\la,\delta}\subset \cup \omega$. After changing coordinates $\omega$ is not necessarily a cap in the $s$-variables. However, each $\omega$ must fit in a cap in the $s$-variables of dimensions $c_{1,M}(\la\delta)^{1/2}\times ... \times c_{n-1,M}(\la\delta)^{1/2}\times c_{n,M}\delta$. For each $\omega$ we will call $\theta'(\omega)$ an $s$-cap that contains it. This is because $M$ is linear.

Clearly the collection of $S_{\la,\delta}\subset \theta'(\omega)$ where $S_{\la,\delta}$ is associated to the quadratic form $Q$ coming from viewing $A_{\la,\delta}$ in the $s$-variables. Write $\theta(\omega) = S_{\la,\delta}\cap \theta'(\omega)$. The $\omega$ are almost disjoint which implies the $\theta(\omega)$ can be taken to be as well. The collection $\theta(\omega)$ satisfy the conditions of Lemma \ref{GMbadCapthrm}. Therefore

\begin{equation}
    C^Q_{\text{bad}}\lesssim (\la\delta)^{\eta n}.
\end{equation}
Also note that $\#\omega\leq \#\theta(\omega)$ and so $\#C_{\text{bad}}\leq \#C^Q_{\text{bad}}$. This gives the conclusion.
\end{proof}
\section{Handling the far terms}\label{farTermsSection}
\subsection{A reduction}
We first wish to prove a bound like \eqref{mainResult} for $\Upsilon^{\text{far}}$.

\begin{proposition}\label{farGoalProp}
    \[\lpn{(\Upsilon^{\text{far}}f)^{1/2}}{p_c} = \lpn{(\sum_{(\nu,\nu')\notin \Xi_{\theta_0}}P_{\nu}fP_{\nu'}f)^{\frac{1}{2}}}{p_c} = \lpn{(\sum_{(\nu,\nu')\notin \Xi_{\theta_0}}P_{\nu}fP_{\nu'}f)}{p_c/2}^{1/2} \lesssim (\la \delta)^{\frac{1}{p_c}}\lpn{f}{2}.\]
\end{proposition}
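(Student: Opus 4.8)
\emph{Proof strategy.} By the identity already recorded in the statement it suffices to prove the $L^{p_c/2}$ bound $\lpnorm{\sum_{(\nu,\nu')\notin \Xi_{\theta_0}} P_\nu f\, P_{\nu'}f}{p_c/2}{\T{n}} \lesssim (\la\delta)^{2/p_c}\lpn{f}{2}^{\,2}$ (using $2\mu_1(p_c)=\tfrac{n-1}{n+1}=\tfrac{2}{p_c}$). The plan is to refine each $P_\nu$ through the cap decomposition of Section \ref{capCountingSection}, writing $P_\nu=P_\nu^{\mathrm{g}}+P_\nu^{\mathrm{b}}$, where $P_\nu^{\mathrm{g}}$ gathers the pieces $P_\omega$ over the good caps $\omega\subset A^\nu_{\la,\delta}$ and $P_\nu^{\mathrm{b}}$ the bad ones. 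By Theorem \ref{badCapBoundThrm} there are only $\lesssim(\la\delta)^{\eta n}$ bad caps altogether, each of dimension $d_\omega=n-1$ and hence carrying only $\lesssim(\la\delta)^{\frac{n-1}{2}}$ lattice points, so that $P_\nu^{\mathrm{b}}f$ is nonzero for at most $\lesssim(\la\delta)^{\eta n}$ directions $\nu$ while still $\sum_\nu\lpn{P_\nu^{\mathrm{b}}f}{2}^2\le\lpn{f}{2}^2$. Expanding the bilinear sum produces a good--good part and a part in which at least one factor is bad.

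For the good--good part, decoupling suffices on its own: one bounds $\lpnorm{\sum_{(\nu,\nu')\notin\Xi_{\theta_0}}P_\nu^{\mathrm{g}}f\,P_{\nu'}^{\mathrm{g}}f}{p_c/2}{\T{n}}$ by $\lpnorm{P_{\mathrm{good}}f}{p_c}{\T{n}}^2$ plus the complementary sum of $P_\nu^{\mathrm{g}}f\,P_{\nu'}^{\mathrm{g}}f$ over $O(1)$-close pairs, and both are handled exactly as in the good-cap estimate of Section \ref{capCountingSection} — an application of Proposition \ref{decpl} together with \eqref{numberCapBound} and the good-cap threshold, as in the bound \eqref{goodCapBoundEqn} — giving $\lesssim_\epsilon\la^{\epsilon}(\la\delta)^{2\mu_1(p_c)-c\eta}\lpn{f}{2}^2$, which is $\lesssim(\la\delta)^{2/p_c}\lpn{f}{2}^2$ once $\epsilon$ is chosen small against $\eta$.

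The bad--involving terms are the heart of the matter. A crude triangle inequality over the $\lesssim(\la\delta)^{\eta n}$ bad caps costs a factor $(\la\delta)^{\eta n}$ that the good-cap surplus $(\la\delta)^{-c\eta}$ cannot repay, so one must exploit the transversality forced by $|\nu-\nu'|\gtrsim\delta$. The plan is to decompose both factors into caps and prove, for far cap-pairs, a bilinear $L^{p_c/2}$ estimate together with an almost-orthogonality of the Fourier supports of the products $P_\omega f\,P_{\omega'}f$: the frequencies of such a product lie in $\omega+\omega'$, and for a fixed lattice point $m$ the two constraints $k_L,k_L'\in A_{\la,\delta}$ and $k_L+k_L'=v$ (the vector attached to $m$) force $\langle k_L,v\rangle=\tfrac12|v|^2+O(\la\delta)$, confining $k_L$ to a thin neighbourhood of a hyperplane section of $\la S^{n-1}$, i.e.\ of an $(n-2)$-sphere. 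Combining this with the explicit lattice-point lemma (Lemma \ref{GMbadCapthrm}, which underlies Theorem \ref{badCapBoundThrm} and also records that the bad caps are the flat, $(n-1)$-dimensional ones) should bound the overlap of these Fourier supports over far cap-pairs with no $(\la\delta)^{c\eta}$ and no $\la^{\epsilon}$ loss. Given such a bound, Plancherel closes the case $n\ge3$, where $p_c/2=\tfrac{n+1}{n-1}\le2$ and hence $\lpnorm{g}{p_c/2}{\T{n}}\le\lpnorm{g}{2}{\T{n}}$; for $n=2$, where $p_c/2=3$, one runs instead the $L^3$ decoupling / exponential-sum argument for arcs of the circle with the same lattice-point input; in all cases one then sums against $\sum_\nu\lpn{P_\nu^{\mathrm{b}}f}{2}^2\le\lpn{f}{2}^2$.

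The expected main obstacle is precisely this last step: making the explicit lattice-point lemma, the bad-cap count, and the transversality of far pairs interlock so as to produce a genuinely loss-free estimate, since at $p=p_c$ neither decoupling alone nor $L^2$-orthogonality alone can deliver the sharp constant.
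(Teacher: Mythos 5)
Your plan shares the paper's broad outline — decompose each $P_\nu$ into good and bad caps, treat the good caps with decoupling, and exploit transversality for the bad ones — but it diverges in structure and leaves the critical steps unproved.

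The largest gap is the treatment of the terms where at least one factor is bad. You lump these together and propose an almost-orthogonality of the Fourier supports of the products $P_\omega f\,P_{\omega'}f$, concluding via Plancherel for $n\ge 3$; but this orthogonality is never established (the sumsets $\omega+\omega'$ overlap heavily for nearby cap-pairs, so it is not automatic), and even if one grants it, the shortcut $\lpnorm{\cdot}{p_c/2}{\T{n}}\le\lpnorm{\cdot}{2}{\T{n}}$ is quantitatively too weak in high dimensions. The transversality computation gives a bilinear $L^2$ bound of order $(\la\delta)^{(n-2)/4}\lpn{P_\omega f}{2}\lpn{P_{\omega'}f}{2}$, and one needs $(n-2)/4 \le 2/p_c = (n-1)/(n+1)$, which fails for all $n\ge 5$. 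The paper avoids this by keeping the bilinear estimate at $L^4$ (Theorem \ref{fseries}), interpolating toward $p_c$ with the trivial $L^2$ bound when $n\ge 4$ (and with $L^\infty$ when $n=2$), which yields $(\la\delta)^{(n-2)/(n+1)}$ on the $\lpn{P_\omega f}{2}\lpn{P_{\omega'}f}{2}$ scale — a genuine gain for every $n$ — and then simply pays the cap-count cost $(\la\delta)^{2\eta n}$ from the triangle inequality, choosing $\eta$ small. Your approach seeks to avoid paying that cost via orthogonality, but the mechanism is not there.

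Second, the mixed good-bad pairs are not covered by the transversality argument at all: a good cap need not be flat ($d_\omega$ can be $n$), so the hyperplane-intersection calculation underlying the $S(a,a')\lesssim(\la\delta)^{(n-2)/2}$ bound does not apply. If instead you triangle over the $\lesssim(\la\delta)^{\eta n}$ bad directions and handle the good factor by decoupling, the decoupling deficiency $(\la\delta)^{-\eta(\frac12-\frac1{p_c})}$ is tiny compared with the cap-count cost $(\la\delta)^{\eta n}$, so no gain survives. The paper handles the mixed terms by a different mechanism (Proposition \ref{firstThreeTermsBoundsDecplProp}): H\"older to split the product, then decoupling on both sides, so that the $(\la\delta)^{-\eta(\frac12-\frac1{p_c})}$ saving of the good side absorbs the $\la^\epsilon$ decoupling loss — no cap count ever enters for these terms. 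Finally, the paper's reduction goes through a Whitney decomposition of the far pairs and a bilinear sector estimate (Proposition \ref{arcResult}) with an explicit power gain $\sigma>0$, which both organizes the $\log\la$ scales and supplies the $\ell^2$ summation via Cauchy--Schwarz; you will need a comparable organization once the bad-involving sums are handled honestly.
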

The diagonal terms will be handled in the next section. We shall organize our sum using a Whitney decomposition as was done successfully in Section 4 of \cite{BHS1}. Here we use our reduction to directions close to $(0,...,0,1)$.

We organize based on the first $n-1$ coordinates. Consider dyadic cubes in $\{x_n=0\}\cong \R{n-1}$ of sidelength $\theta_k = \theta_0 2^k$. We will label $\tau^{\theta_k}_{\mu}$ as the translations of $[0,\theta_k)^{n-1}$ by $\mu\in \theta_k\Z{n-1}$. 

We do not have to worry about $\mu,\mu'$ that are close by the definition of $\Xi_{\theta_0}$, so we will instead worry about organizing based on distance. We will call two cubes close if they have sidelength $\theta_k$, are not adjacent, and are contained in adjacent cubes of sidelength $2\theta_k$. Clearly there are $O(1)$ close pairs for any fixed cube, and all close cubes are separated by about $\theta_k$. We write $\tau^{\theta_k}_{\mu}\sim \tau^{\theta_k}_{\mu'}$ if two cubes $\tau^{\theta_k}_{\mu}, \tau^{\theta_k}_{\mu'}$ of sidelength $\theta_k$ are close.

We will write $\nu\in \tau_{\mu}^{\theta_k}$ if the projection of $\nu$ onto the first $n-1$ coordinates is within $\tau_{\mu}^{\theta_k}$. Given $(\nu,\nu')\not\in \Xi_{\theta_0}$ there is a unique set $(\theta_k,\tau^{\theta_k}_{\mu}, \tau^{\theta_k}_{\mu'})$ such that $\nu\in \tau^{\theta_k}_{\mu}, \nu'\in \tau^{\theta_k}_{\mu'}$, and $\tau^{\theta_k}_{\mu}\sim \tau^{\theta_k}_{\mu'}$. Our reduction to directions near $(0,...,0,1)$ ensures that projections of $(\nu,\nu')$ onto the first $n-1$ coordinates being close implies that $(\nu,\nu')$ are close as points on $S^{n-1}$.

All of this allows for the organization

\begin{equation}
    \sum_{(\nu,\nu')\notin \Xi_{\theta_0}}(P_{\nu}f)(P_{\nu'}f) = \sum_{\{k\geq r: 2^k\theta_0 \ll 1\}}\sum_{(\mu,\mu'): \tau^{\theta_k}_{\mu}\sim \tau_{\mu'}^{\theta_k}}\sum_{(\nu,\nu')\in \tau_{\mu}^{\theta_k}\times \tau_{\mu'}^{\theta_k}}(P_{\nu}f)(P_{\nu'}f).
\end{equation}
Where $r$ is such that $|\nu-\nu'|< 2^r\theta_0\implies (\nu,\nu')\in \Xi_{\theta_0}$ and the maximum value of $k$ is the smallest such that $\sup_{\nu,\nu'}|\nu-\nu'|<2^k\theta_0$. As $\nu,\nu'$ come from the sphere this quantity is $O(1)$. The advantage of this organization is that we have isolated the terms that contribute. The first sum contains $\sim \log(\la)$ terms, while the second sum has the property that for a fixed $\mu$ there exist only $O(1)$ values of $\mu'$ such that the relation $\tau^{\theta_k}_{\mu}\sim \tau_{\mu'}^{\theta_k}$ holds. This allows us to reduce to the following proposition.

\begin{proposition}\label{arcResult}
    Let $P_{\gamma}, P_{\gamma'}$ be projections onto disjoint angular sectors of $A_{\la,\delta}$ that are subsets of a fixed  neighborhood of $(0,...,0,1)$. Let each be defined by aperture $\sim \theta_k$ and have angular separation $\sim\theta_k$ from each other. There exists a fixed positive number $\sigma$ such that the following holds.
    \begin{equation}\label{normBoundGoal}
        \lpn{(P_{\gamma}f P_{\gamma'}f)^{1/2}}{p_c}\lesssim (\la\delta)^{\frac{1}{p_c}-\sigma}\Bigl(\lpn{P_{\gamma}f}{2}\lpn{P_{\gamma'}f}{2}\Bigr)^{1/2}.
    \end{equation}
\end{proposition}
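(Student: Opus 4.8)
The plan is to decompose the two angular sectors into caps, split these into the good and bad families of Section~\ref{capCountingSection}, and dispose of the resulting pieces of $P_\gamma f\cdot P_{\gamma'}f$ by two complementary mechanisms: decoupling will handle every term in which at least one factor is built from good caps, while an explicit Fourier-series computation using the transversality of the two sectors will handle the remaining bad--bad interaction, which by the cap-counting bound of Theorem~\ref{badCapBoundThrm} involves only a few terms.

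First I would cover $A^\gamma_{\la,\delta}$ and $A^{\gamma'}_{\la,\delta}$ by caps and, via the partition of unity of Section~\ref{capCountingSection}, write $P_\gamma f=P_{\gamma}^{\text{good}} f+P_{\gamma}^{\text{bad}} f$ and $P_{\gamma'}f=P_{\gamma'}^{\text{good}} f+P_{\gamma'}^{\text{bad}} f$ with the threshold $\eta>0$ fixed small enough that Theorem~\ref{badCapBoundThrm} applies; in particular every bad cap then has $d_\omega=n-1$ and each sector contains $\lesssim(\la\delta)^{\eta n}$ of them. Expanding the product gives the three pieces $P_{\gamma}^{\text{good}} f\cdot P_{\gamma'}f$, $P_{\gamma}^{\text{bad}} f\cdot P_{\gamma'}^{\text{good}} f$, and $P_{\gamma}^{\text{bad}} f\cdot P_{\gamma'}^{\text{bad}} f$. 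For the first, H\"older's inequality with exponents $(p_c,p_c)$, the good-cap bound \eqref{goodCapBoundEqn} for $P_{\gamma}^{\text{good}} f$, and the decoupling argument of \eqref{decplGoodCapEqn} (applied to the sub-annulus $A^{\gamma'}_{\la,\delta}$) for $P_{\gamma'}f$ give
\[
\lpn{P_{\gamma}^{\text{good}} f\cdot P_{\gamma'}f}{p_c/2}\lesssim_\epsilon \la^\epsilon(\la\delta)^{(\frac{n-1}{2}-\eta)(\frac12-\frac1{p_c})+\frac1{p_c}}\lpn{P_\gamma f}{2}\lpn{P_{\gamma'}f}{2}=\la^\epsilon(\la\delta)^{\frac2{p_c}-\eta(\frac12-\frac1{p_c})}\lpn{P_\gamma f}{2}\lpn{P_{\gamma'}f}{2}.
\]
Since $\eta(\frac12-\frac1{p_c})$ is a fixed positive number, it absorbs the $\la^\epsilon$ loss once $\epsilon$ is chosen small relative to $\eta$ and $\kappa$, and this piece obeys \eqref{normBoundGoal} with a fixed gain; the piece $P_{\gamma}^{\text{bad}} f\cdot P_{\gamma'}^{\text{good}} f$ is handled identically with $\gamma$ and $\gamma'$ interchanged.

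It then remains to estimate $\lpn{P_{\gamma}^{\text{bad}} f\cdot P_{\gamma'}^{\text{bad}} f}{p_c/2}$, and for this I would not use decoupling at all. By the triangle inequality it is at most $\sum_{\omega\subset\gamma,\;\omega'\subset\gamma'}\lpn{P_\omega f\cdot P_{\omega'}f}{p_c/2}$, a sum over $\lesssim(\la\delta)^{2\eta n}$ pairs of bad caps, so it suffices to bound a single pair with a fixed power gain that beats $(\la\delta)^{2\eta n}$ after $\eta$ is taken small. For a single pair I would pass to Fourier series: the coefficients of $P_\omega f\cdot P_{\omega'}f$ are $\sum_{j}\widehat{P_\omega f}(j)\,\widehat{P_{\omega'}f}(m-j)$, with $j$ running over $2\pi L\cap\omega\cap(m-\omega')$. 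The key geometric input is that, because the sectors are angularly separated by $\sim\theta_k\gtrsim\delta$ and a cap is comparable to the $\delta$-neighbourhood of a flat $(\la\delta)^{1/2}$-plate tangent to $\la S^{n-1}$, the plates $\omega$ and $m-\omega'$ are transversal with angle $\gtrsim\theta_k$, so $\omega\cap(m-\omega')$ is contained in a box of dimensions $(\la\delta)^{1/2}\times\cdots\times(\la\delta)^{1/2}\times O(\delta/\theta_k)\times O(\delta/\theta_k)$ with $\delta/\theta_k\lesssim1$; hence the multiplicity $R:=\sup_m\#\bigl(2\pi L\cap\omega\cap(m-\omega')\bigr)$ satisfies $R\lesssim(\la\delta)^{\frac{n-2}{2}}$, a full power of $(\la\delta)^{1/2}$ better than the trivial bound $\#\omega\lesssim(\la\delta)^{\frac{n-1}{2}}$. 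Cauchy--Schwarz in $j$ then gives $\lpn{P_\omega f\cdot P_{\omega'}f}{2}\lesssim R^{1/2}\lpn{P_\omega f}{2}\lpn{P_{\omega'}f}{2}\lesssim(\la\delta)^{\frac{n-2}{4}}\lpn{P_\omega f}{2}\lpn{P_{\omega'}f}{2}$, while the crude bounds $\lpn{P_\omega f\cdot P_{\omega'}f}{1}\le\lpn{P_\omega f}{2}\lpn{P_{\omega'}f}{2}$ and $\lpn{P_\omega f\cdot P_{\omega'}f}{\infty}\lesssim(\la\delta)^{\frac{n-1}{2}}\lpn{P_\omega f}{2}\lpn{P_{\omega'}f}{2}$ hold; interpolating the $L^2$ bound against the $L^1$ bound when $p_c/2\le2$ (that is, $n\ge3$), or against the $L^\infty$ bound when $p_c/2\ge2$ (that is, $n\le3$), yields
\[
\lpn{P_\omega f\cdot P_{\omega'}f}{p_c/2}\lesssim(\la\delta)^{\frac2{p_c}-\rho}\lpn{P_\omega f}{2}\lpn{P_{\omega'}f}{2},\qquad \rho:=\min\Bigl\{\tfrac1{n+1},\tfrac1{p_c}\Bigr\}>0 .
\]
Summing over the $\lesssim(\la\delta)^{\eta n}$ bad caps in each sector (using $L^2$ almost-orthogonality of the caps) and choosing $\eta$ small enough that $2\eta n<\rho$ gives \eqref{normBoundGoal} for the bad--bad piece. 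Since each of the three pieces satisfies an $L^{p_c/2}$ bound of the form $(\la\delta)^{\frac2{p_c}-c_i}\lpn{P_\gamma f}{2}\lpn{P_{\gamma'}f}{2}$ with $c_i>0$ fixed, taking square roots and setting $\sigma=\tfrac12\min_i c_i$ proves the proposition.

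The step I expect to be the main obstacle is the transversal lattice-point count $R\lesssim(\la\delta)^{\frac{n-2}{2}}$. One must show, carefully and uniformly over all dyadic scales $\theta_k$ between $\sim\delta$ and $\sim1$, that a cap of the annulus $A_{\la,\delta}$ really is comparable to the $\delta$-neighbourhood of an $(\la\delta)^{1/2}$-plate tangent to $\la S^{n-1}$ (this is where the reduction to directions near $(0,\dots,0,1)$ enters), that two such plates drawn from angularly $\theta_k$-separated sectors meet with angle $\gtrsim\theta_k$, and that consequently the $\delta$-thickened intersection $\omega\cap(m-\omega')$ collapses to the claimed thin box, so that $2\pi L$ meets it in at most $O(1)$ translates of an $(n-2)$-dimensional $(\la\delta)^{1/2}$-box. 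Everything else --- the good-cap bookkeeping and the final juggling of $\epsilon$, $\eta$ and $\sigma$ --- is routine.
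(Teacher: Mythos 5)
Your proof is correct and follows essentially the same route as the paper: the same good/bad cap dichotomy with decoupling handling any term containing a good factor, and for the bad--bad piece the same transversal lattice-point bound $R\lesssim(\la\delta)^{\frac{n-2}{2}}$ fed into a Cauchy--Schwarz $L^2$ estimate for the product (this is exactly the paper's Theorem~\ref{fseries}, inlined) and then interpolated against $L^1$ or $L^\infty$ according to dimension. The organizational differences---grouping the good terms as three pieces rather than four, working with $L^{p_c/2}$ of the product rather than $L^{p_c}$ of its square root, and invoking the actual $\theta_k$-separation of the sectors where the paper keeps only the worst-case $\delta$-separation---are cosmetic and do not change the substance.
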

Before we prove this, let us show that it implies Proposition \ref{farGoalProp}.
\begin{proof}[Proof that Proposition \ref{arcResult}$\implies$ Proposition \ref{farGoalProp}]
Note that by our restriction using the triangle inequality and rotation, the following equality holds

\begin{equation}
    P_{\gamma}\Bigl(\sum_{\nu\in \tau^{\theta_k}_{\mu}}P_{\nu}f\Bigr) = \sum_{\nu\in \tau^{\theta_k}_{\mu}}P_{\nu}f,
\end{equation}
if we widen the definition of $P_{\gamma}$ by a constant that does not depend on $\la$. This allows us to apply the following inequalities assuming Proposition \ref{arcResult}.

    \begin{align*}
        &\lpn{(\sum_{(\nu,\nu')\notin \Xi_{\theta_0}}P_{\nu}fP_{\nu'}f)^{\frac{1}{2}}}{p_c}\\
        &\leq\Bigl(\sum_{\{k\geq r: 2^k\theta_0 \ll 1\}}\sum_{(\mu,\mu'): \tau^{\theta_k}_{\mu}\sim \tau_{\mu'}^{\theta_k}}\lpn{\sum_{(\nu,\nu')\in \tau_{\mu}^{\theta_k}\times \tau_{\mu'}^{\theta_k}}P_{\nu}fP_{\nu'}f}{p_c/2}\Bigr)^{1/2}\\ 
        &=\Bigl(\sum_{\{k\geq r: 2^k\theta_0 \ll 1\}}\sum_{(\mu,\mu'): \tau^{\theta_k}_{\mu}\sim \tau_{\mu'}^{\theta_k}}\lpn{P_{\gamma}\Bigl(\sum_{\nu\in \tau^{\theta_k}_{\mu}}P_{\nu}f\Bigr)P_{\gamma'}\Bigl(\sum_{\nu'\in \tau^{\theta_k}_{\mu'}}P_{\nu'}f\Bigr)}{p_c/2}\Bigr)^{1/2}\\ 
        &\lesssim\Bigl(\sum_{\{k\geq r: 2^k\theta_0 \ll 1\}}\sum_{(\mu,\mu'): \tau^{\theta_k}_{\mu}\sim \tau_{\mu'}^{\theta_k}}(\la\delta)^{\frac{2}{p_c}-2\sigma}\lpn{\sum_{\nu\in \tau^{\theta_k}_{\mu}}P_{\nu}f}{2} \lpn{\sum_{\nu'\in \tau^{\theta_k}_{\mu'}}P_{\nu'}f}{2}\Bigr)^{1/2}\\
        &\lesssim (\la\delta)^{\frac{1}{p_c}-\sigma}\Bigl(\sum_{\{k\geq r: 2^k\theta_0 \ll 1\}}\sum_{(\mu,\mu'): \tau^{\theta_k}_{\mu}\sim \tau_{\mu'}^{\theta_k}}\Bigl(\sum_{\nu\in \tau^{\theta_k}_{\mu}}\lpn{P_{\nu} f}{2}^2\Bigr)^{1/2}\Bigl(\sum_{\nu'\in \tau^{\theta_k}_{\mu'}}\lpn{P_{\nu'} f}{2}^2\Bigr)^{1/2}\Bigr)^{1/2}\\
        &\lesssim (\la\delta)^{\frac{1}{p_c}-\sigma}\Bigl(\sum_{\{k\geq r: 2^k\theta_0 \ll 1\}}\Bigl(\sum_{(\mu,\mu'):\tau^{\theta_k}_{\mu}\sim \tau_{\mu'}^{\theta_k}}\sum_{\nu\in \tau^{\theta_k}_{\mu}}\lpn{P_{\nu} f}{2}^2\Bigr)^{1/2}\Bigl(\sum_{(\mu,\mu'): \tau^{\theta_k}_{\mu}\sim \tau_{\mu'}^{\theta_k}}\sum_{\nu'\in \tau^{\theta_k}_{\mu'}}\lpn{P_{\nu'} f}{2}^2\Bigr)^{1/2}\Bigr)^{1/2}\\
        &\lesssim (\la\delta)^{\frac{1}{p_c}-\sigma}\Bigl(\sum_{\{k\geq r: 2^k\theta_0 \ll 1\}}\sum_{\mu}\sum_{\nu\in \tau^{\theta_k}_\mu} 
        \lpn{P_{\nu}f}{2}^2\Bigr)^{1/2}\\
        &\lesssim \log(\la)(\la\delta)^{\frac{1}{p_c}-\sigma}\lpn{f}{2} \leq (\la\delta)^{\frac{1}{p_c}}\lpn{f}{2}.
    \end{align*}
Here we have used the fact that the first sum only has $\sim \log(\la)$ terms as it is a dyadic sum. This $\log$ loss is absorbed by our assumed power gain. We used that for a fixed $\mu$ there exist only $O(1)$ values of $ \mu'$ such that $\tau^{\theta_k}_{\mu}\sim \tau_{\mu'}^{\theta_k}$. This allows us to bound the double sum over ${(\mu,\mu'): \tau^{\theta_k}_\mu \sim \tau^{\theta_k}_{\mu'}}$ by a single sum over $\mu$ at the price of an $O(1)$ constant by the triangle inequality. We also used Cauchy-Schwarz and orthogonality.
\end{proof}

As we will handle the diagonal case by other means, we are free to exploit advantages present in the bilinear setup of Proposition \ref{arcResult}. We will only use decoupling on good caps as outlined in Section \ref{capCountingSection}. For the bad caps we will compute their bilinear interaction more directly to get a gain. Essentially, any two interacting bad caps must be sufficiently transverse so that their product has an improved norm.

Using the relation 

\begin{equation*}
    P_\gamma = P_\gamma P_{\la,\delta} = P_\gamma P_{\text{good}}+ P_\gamma P_{\text{bad}},
\end{equation*}
naturally define the projections $P^{\gamma}_{\text{good}}$ and $P^{\gamma}_{\text{bad}}$, as well as $C^{\gamma}_{\text{good}}$ and $C^{\gamma}_{\text{bad}}$. By the triangle inequality it suffices to bound the following by the bound in \eqref{normBoundGoal} to prove Proposition \ref{arcResult}.

\begin{equation}\label{goodBadtermsAfterTriangleEqn}
    \lpn{(P^{\gamma}_{\text{good}}f P^{\gamma'}_{\text{good}}f)^{1/2}}{p_c} + \lpn{(P^{\gamma}_{\text{good}} fP^{\gamma'}_{\text{bad}}f)^{1/2}}{p_c} + \lpn{(P^{\gamma}_{\text{bad}}f P^{\gamma'}_{\text{good}}f)^{1/2}}{p_c} + \lpn{(P^{\gamma}_{\text{bad}} fP^{\gamma'}_{\text{bad}}f)^{1/2}}{p_c}.
\end{equation}
It is important our bound does not depend on $\gamma,\gamma'$ and this will be apparent in our proof of the following proposition. We will bound the fourth term in the following subsection. By applying H\"older's in the below proof we are not taking advantage of any gains from the bilinear interaction of the first three terms. This is a manifestation of how decoupling handles all caps but the bad ones. Understanding these interactions better could be useful for future work.

\begin{proposition}\label{firstThreeTermsBoundsDecplProp}
    There exists positive number $\sigma_1$ that depends on $\eta$ (which, in turn, does not depend on $\la$) so that we have
    \begin{align}
    \begin{split}
        &\lpn{(P^{\gamma}_{\text{good}}f P^{\gamma'}_{\text{good}}f)^{1/2}}{p_c} + \lpn{(P^{\gamma}_{\text{good}} fP^{\gamma'}_{\text{bad}}f)^{1/2}}{p_c} + \lpn{(P^{\gamma}_{\text{bad}}f P^{\gamma'}_{\text{good}}f)^{1/2}}{p_c}\\
        &\lesssim (\la\delta)^{\frac{1}{p_c}-\sigma_1}\Bigl(\lpn{P_{\gamma}f}{2}\lpn{P_{\gamma'}f}{2}\Bigr)^{1/2}.
        \end{split}
    \end{align}
\end{proposition}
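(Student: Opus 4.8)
The plan is to discard the bilinear structure of these three terms entirely and reduce, via Hölder's inequality, to a \textit{linear} estimate for each factor, which is then supplied by decoupling. The asymmetry that makes this work is that each of the three terms contains at least one factor built only from good caps: that factor carries a genuine power gain $(\la\delta)^{-\eta(\frac{1}{2}-\frac{1}{p_c})}$ over the conjectured bound coming from \eqref{goodCapBoundEqn}, while the remaining (possibly bad) factor costs nothing beyond the harmless $O_\epsilon(\la^\epsilon)$ of decoupling. Thus no bilinear transversality is needed here; it is reserved for the bad--bad term in the next subsection.

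First I would record the Hölder reduction: since $\frac{2}{p_c}=\frac{1}{p_c}+\frac{1}{p_c}$, for any $g,h$ we have $\lpn{(gh)^{1/2}}{p_c}=\lpn{gh}{p_c/2}^{1/2}\le\bigl(\lpn{g}{p_c}\lpn{h}{p_c}\bigr)^{1/2}$. Applied to the three terms this leaves us to bound $\lpn{P^{\gamma}_{\text{good}}f}{p_c}$, $\lpn{P^{\gamma}_{\text{bad}}f}{p_c}$ and their $\gamma'$-analogues. For a good factor I would run the argument of \eqref{decplGoodCapEqn}--\eqref{goodCapBoundEqn} localized to the sector: $P^{\gamma}_{\text{good}}f=\sum_{\omega\in C^{\gamma}_{\text{good}}}P_{\gamma}P_\omega f$ is a sum of cap pieces, each supported in a good cap, so applying Proposition~\ref{decpl} and then \eqref{numberCapBound} with the good-cap bound $\#\omega\lesssim(\la\delta)^{\frac{n-1}{2}-\eta}$ (using the finite overlap of the cover, so that each $\omega\in\Omega$ meets only $O(1)$ good caps and the frequency support of $P_\omega(P^{\gamma}_{\text{good}}f)$ still contains $\lesssim(\la\delta)^{\frac{n-1}{2}-\eta}$ lattice points), followed by $L^2$-orthogonality of the caps and the pointwise Fourier-coefficient domination $\lpn{P^{\gamma}_{\text{good}}f}{2}\le\lpn{P_\gamma f}{2}$ (the $\chi_\omega$ form a partition of unity, so the symbol of $P^{\gamma}_{\text{good}}$ is bounded by $1$ and supported in the $\gamma$-sector of $A_{\la,\delta}$), yields
\[\lpn{P^{\gamma}_{\text{good}}f}{p_c}\lesssim_\epsilon \la^\epsilon(\la\delta)^{(\frac{n-1}{2}-\eta)(\frac{1}{2}-\frac{1}{p_c})}\lpn{P_\gamma f}{2}=\la^\epsilon(\la\delta)^{\frac{1}{p_c}-\eta(\frac{1}{2}-\frac{1}{p_c})}\lpn{P_\gamma f}{2},\]
using the identity $\mu_1(p_c)=\frac{n-1}{2}(\frac{1}{2}-\frac{1}{p_c})=\frac{1}{p_c}$. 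For a bad factor I would run the \textit{same} computation but with the universal cap bound \eqref{maxCapBound} in place of the good-cap gain, i.e.\ exactly \eqref{decplGoodCapEqn} localized to $C^{\gamma'}_{\text{bad}}$, obtaining $\lpn{P^{\gamma'}_{\text{bad}}f}{p_c}\lesssim_\epsilon\la^\epsilon(\la\delta)^{\frac{1}{p_c}}\lpn{P_{\gamma'}f}{2}$; note Theorem~\ref{badCapBoundThrm} is not needed for these terms.

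Finally I would combine. The good--good term acquires exponent $\frac{1}{p_c}-\eta(\frac{1}{2}-\frac{1}{p_c})$ and the good--bad and bad--good terms acquire exponent $\frac{1}{p_c}-\frac{\eta}{2}(\frac{1}{2}-\frac{1}{p_c})$, each times $\la^{2\epsilon}$ and $\bigl(\lpn{P_\gamma f}{2}\lpn{P_{\gamma'}f}{2}\bigr)^{1/2}$. Since $\delta>\la^{-1+\kappa}$ forces $\la\delta>\la^{\kappa}$, we have $(\la\delta)^{-\frac{\eta}{2}(\frac{1}{2}-\frac{1}{p_c})}\le\la^{-\frac{\kappa\eta}{2}(\frac{1}{2}-\frac{1}{p_c})}$, so choosing $\epsilon$ small enough, depending only on $\eta$ (and on the fixed parameters $n,\kappa$), makes $\la^{2\epsilon}(\la\delta)^{-\frac{\eta}{2}(\frac{1}{2}-\frac{1}{p_c})}\le\la^{-s}\le(\la\delta)^{-s}$ for some $s>0$; taking $\sigma_1$ to be the minimum such $s$ over the three terms finishes the proof. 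The resulting bound involves only $\lpn{P_\gamma f}{2}$, $\lpn{P_{\gamma'}f}{2}$ and the cap geometry, hence is independent of $\gamma,\gamma'$ as required.

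I expect the only real obstacle to be bookkeeping rather than conceptual: one must check that decoupling may be applied to the sub-operators $P^{\gamma}_{\text{good}}$ and $P^{\gamma'}_{\text{bad}}$ and that the per-cap lattice-point count survives the restriction $P_\omega(P^{\gamma}_{\text{good}}f)$ --- both consequences of the finite overlap of the cover $\Omega$ and of the $\chi_\omega$ being a partition of unity, exactly as in the derivation of \eqref{goodCapBoundEqn}. The one genuinely new point relative to \eqref{goodCapBoundEqn} is the observation that a single good factor's $\eta$-gain survives being square-rooted against a lossless bad factor, so that Hölder's inequality alone suffices for these three terms.
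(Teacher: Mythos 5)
Your proposal matches the paper's proof: Hölder's inequality reduces the bilinear terms to a product of linear $L^{p_c}$ bounds, decoupling together with the per-cap lattice-point count and $L^2$ orthogonality gives each factor a bound of the form $\la^\epsilon(\la\delta)^{\frac{1}{p_c}-\cdot}\lpn{P_\gamma f}{2}$ with an $\eta$-gain on the good factor, and the lower bound $\la\delta>\la^\kappa$ is used to absorb $\la^\epsilon$ into a genuine power saving by choosing $\epsilon$ small depending on $\eta$ and $\kappa$. This is exactly the paper's argument, down to the observation that the bad-cap counting theorem is not needed for these three terms and that the bound is uniform in $\gamma,\gamma'$.
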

\begin{proof}
We prove the bound for one term as the proof is similar for all three. Using \eqref{goodCapBoundEqn} we have

\begin{align*}
    &\lpn{(P^{\gamma}_{\text{good}} fP^{\gamma'}_{\text{bad}}f)^{1/2}}{p_c}\\&\leq \Bigl(\lpn{P^{\gamma}_{\text{good}}f}{p_c}\lpn{P^{\gamma'}_{\text{bad}}f}{p_c}\Bigr)^{1/2}\\
    &\lesssim_{\epsilon}\la^\epsilon\Bigl(\sum_{\omega\in C^{\gamma}_{\text{good}}}\lpn{P^{\gamma}_\omega f}{p_c}^2\Bigr)^{1/4}\Bigl(\sum_{\omega\in C^{\gamma'}_{\text{bad}}}\lpn{P^{\gamma'}_\omega f}{p_c}^2\Bigr)^{1/4}  \\
    &\lesssim_\epsilon \la^\epsilon (\la\delta)^{\frac{n-1}{2}(\frac{1}{2}-\frac{1}{p_c})}(\la\delta)^{-\eta(\frac{1}{2}-\frac{1}{p_c})}\Bigl(\sum_{\omega\in C^{\gamma}_{\text{good}}}\lpn{P^{\gamma}_\omega f}{2}^2\sum_{\omega\in C^{\gamma'}_{\text{bad}}}\lpn{P^{\gamma'}_\omega f}{2}^2\Bigr)^{1/4}\\
    &\lesssim_\epsilon \la^{\epsilon}(\la\delta)^{\mu_1(p_c)}(\la\delta)^{-\eta(\frac{1}{2}-\frac{1}{p_c})}\Bigl(\lpn{P_\gamma f}{2}\lpn{P_{\gamma'}f}{2}\Bigr)^{1/2}\\
    &\lesssim_\eta (\la\delta)^{\mu_1(p_c)-\sigma_1}\Bigl(\lpn{P_\gamma f}{2}\lpn{P_{\gamma'}f}{2}\Bigr)^{1/2}
\end{align*}

Choosing $\epsilon$ such that $\la^\epsilon = (\la\delta)^{\frac{\eta}{2(n+1)}}$ allows us to conclude the last line with $\sigma_1 = \frac{\eta}{2(n+1)}$. The other cross term is handled similarly with the same bound and the first term in \eqref{goodBadtermsAfterTriangleEqn} satisfies better estimates. This bound gives an improvement for any choice of positive $\eta$ and so places no constraints on the parameter as we select $\epsilon$ once $\eta$ is fixed. This will change out multiplied constant but later we will select $\eta$ in a manner that depends on the other fixed parameters in the problem. 
\end{proof}

\subsection{Estimating the bad caps}
To estimate the bad caps, we will use an explicit computation of the $L^4$ norm. We will then interpolate with the $L^2\rightarrow L^\infty$ or $L^2\rightarrow L^2$ norm depending on $n$. 

The following general result connects the geometry of lattice points in caps to $L^4$ norms.

\begin{theorem}\label{fseries}
    Let $n\geq 2$ and $A,B$ be subsets of a lattice $L$. For $(a,b)\in A\times B$, define 
    \begin{equation*}
        S(a,b) \coloneq\{(a',b')\in A\times B: a + b = a' + b'\},
    \end{equation*}
    and suppose $S$ is such that $S(a,b)\leq S$ for every $a,b\in A\times B$. Then 

    \begin{equation*}
        \lpnorm{(P_Af P_Bf)^{1/2}}{4}{\mathbb{T}^n}\lesssim S^{1/4}\Bigl(\lpnorm{P_Af}{2}{\mathbb{T}^n}\lpnorm{P_Bf}{2}{\mathbb{T}^n} \Bigr)^{1/2}
    \end{equation*}
    where $P_A f$ and $P_B f$ are the Fourier projections of $f$ onto the set $A, B$ respectively.
\end{theorem}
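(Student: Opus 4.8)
The plan is to expand $P_A f \cdot P_B f$ as a Fourier series on $\T{n}$ and exploit the periodic Plancherel theorem. Write $g = P_A f = \sum_{a \in A} \hat{f}(a) e_a(x)$ and $h = P_B f = \sum_{b \in B} \hat{f}(b) e_b(x)$, where the sums are over lattice points in $A$ and $B$ respectively (using the normalization conventions of the paper so that $\{e_{k_L}\}$ is orthonormal). Then the product $gh$ has a Fourier expansion
\begin{equation*}
    g(x)h(x) = \sum_{m \in L} \Bigl( \sum_{\substack{(a,b) \in A \times B \\ a+b = m}} c_{a,b}\,\hat{f}(a)\hat{f}(b) \Bigr) e_m(x),
\end{equation*}
where $c_{a,b}$ is the $O(1)$ normalization constant arising from $e_a e_b = c_{a,b} e_{a+b}$ (a product of exponentials is again an exponential up to normalization constants, all comparable to $1$). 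The point is that the $m$-th Fourier coefficient of $gh$ is a sum over the fiber $\{(a,b) \in A\times B : a+b = m\}$.

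First I would apply Plancherel (more precisely, $\lpnorm{gh}{2}{\T{n}}^2 = \sum_m |\widehat{gh}(m)|^2$), giving
\begin{equation*}
    \lpnorm{gh}{2}{\T{n}}^2 \lesssim \sum_{m \in L} \Bigl| \sum_{\substack{(a,b): a+b=m}} \hat{f}(a)\hat{f}(b) \Bigr|^2.
\end{equation*}
Next, for each fixed $m$ I would apply Cauchy--Schwarz to the inner sum, whose number of terms is exactly $\#\{(a,b) \in A\times B : a+b=m\}$. The hypothesis $S(a,b) \le S$ bounds the size of every such fiber (note that the fiber through $m$ has cardinality $S(a,b)$ for any $(a,b)$ on it), so Cauchy--Schwarz yields
\begin{equation*}
    \Bigl| \sum_{\substack{(a,b): a+b=m}} \hat{f}(a)\hat{f}(b) \Bigr|^2 \le S \sum_{\substack{(a,b): a+b=m}} |\hat{f}(a)|^2 |\hat{f}(b)|^2.
\end{equation*}
Summing over $m$ and noting that the map $(a,b) \mapsto a+b$ is a function (each pair contributes to exactly one $m$), the double sum collapses to $\sum_{(a,b) \in A \times B} |\hat f(a)|^2 |\hat f(b)|^2 = \bigl(\sum_{a \in A}|\hat f(a)|^2\bigr)\bigl(\sum_{b\in B}|\hat f(b)|^2\bigr) = \lpnorm{g}{2}{\T{n}}^2 \lpnorm{h}{2}{\T{n}}^2$ by Plancherel again. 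Hence $\lpnorm{gh}{2}{\T{n}} \lesssim S^{1/2} \lpnorm{g}{2}{\T{n}} \lpnorm{h}{2}{\T{n}}$.

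Finally I would convert this into the claimed form. Observe that $\lpnorm{(P_Af\, P_Bf)^{1/2}}{4}{\T{n}}^2 = \lpnorm{(P_A f\, P_B f)^{1/2}}{4}{\T{n}}^2 = \lpnorm{P_A f\, P_B f}{2}{\T{n}}$ — i.e.\ the $L^4$ norm of the square root of a product is the square root of the $L^2$ norm of the product — so the estimate just proved gives $\lpnorm{(P_Af P_Bf)^{1/2}}{4}{\T{n}}^2 \lesssim S^{1/2}\lpnorm{P_Af}{2}{\T{n}}\lpnorm{P_Bf}{2}{\T{n}}$, and taking square roots produces exactly $\lpnorm{(P_Af P_Bf)^{1/2}}{4}{\T{n}} \lesssim S^{1/4}(\lpnorm{P_Af}{2}{\T{n}}\lpnorm{P_Bf}{2}{\T{n}})^{1/2}$. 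The only subtle points — and the place where I would be most careful — are (i) tracking the normalization constants $c_{k_L}$ and $c_{a,b}$ through the exponential-product identity on the general torus $\R{n}/L^*$ to confirm they are all $\sim 1$ and absorb into the implicit constant, and (ii) making sure that when $A$ and $B$ overlap or the representation $a+b=m$ is counted with the correct multiplicity, the definition of $S(a,b)$ genuinely controls every fiber; both are routine but worth stating explicitly.
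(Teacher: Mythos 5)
Your proposal is correct and follows essentially the same route as the paper: expand via Plancherel so the $m$-th Fourier coefficient of $P_Af\cdot P_Bf$ is a sum over the fiber $\{(a,b)\in A\times B:a+b=m\}$, then apply Cauchy--Schwarz together with the hypothesis $|S(a,b)|\le S$ to extract the factor of $S$. The only cosmetic difference is that you apply Cauchy--Schwarz once directly to the sum over each fiber, whereas the paper expands the squared coefficient into a double sum and applies Cauchy--Schwarz twice before recombining; both yield the same bound, and your concerns (i) and (ii) are indeed routine and handled in the paper's argument in the same implicit way.
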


\begin{proof}
    This is an explicit calculation. We have

    \begin{align*}
    &\lpn{(P_Af P_Bf)^{1/2}}{4}^4 = \int_{\mathbb{T}^n}|\sum_{(a,b)\in A \times B}\hat{f}(a)\hat{f}(b)e_{a+b}(x)|^2dx\\
    &=\int_{\T{n}}|\sum_{c\in A+B}\Bigl(\sum_{\substack{ a+b = c\\(a,b)\in A\times B}}\hat{f}(a)\hat{f}(b)\Bigr) e_c(x)|^2dx \\
    &= \sum_{c\in A+B}|\sum_{\substack{ a+b = c\\(a,b)\in A\times B}}\hat{f}(a)\hat{f}(b)|^2\\
    &\leq \sum_{(a,b)\in A \times B}|\hat{f}(a)\hat{f}(b)|\Bigr(\sum_{(a',b')\in S(a,b)}|\hat{f}(a')\hat{f}(b')|\Bigl)\\
    &\leq S^{1/2}\sum_{(a,b)\in A \times B}|\hat{f}(a)\hat{f}(b)|\Bigr(\sum_{(a',b')\in S(a,b)}|\hat{f}(a')\hat{f}(b')|^2\Bigl)^{1/2}\\
    &\leq S^{1/2}\Bigl(\sum_{(a,b)\in A \times B}|\hat{f}(a)\hat{f}(b)|^2\Bigr)^{1/2}\Bigr(\sum_{(a,b)\in A \times B}\sum_{(a',b')\in S(a,b)}|\hat{f}(a')\hat{f}(b')|^2\Bigl)^{1/2}\\
    &\leq S\sum_{(a,b)\in A \times B}|\hat{f}(a)\hat{f}(b)|^2\\
    &=S \bigl(\sum_{a\in A}|\hat{f}(a)|^2\bigr)\bigl(\sum_{b\in B}|\hat{f}(b)|^2 \bigr)\\
    &= S\lpn{P_Af}{2}^2\lpn{P_Bf}{2}^2.
\end{align*}
 Where we used Plancharel and Cauchy-Schwarz twice. Taking $4$-th roots allows us to conclude. 
\end{proof}
This result and proof is essentially a discrete version of Proposition 3.11 from \cite{D1}, although it applies in more generality than the neighborhoods of hypersurfaces considered there and without a transversality constant. It can also be viewed as a bilinear version of a well known connection between additive energies and the $L^4$ norm of Fourier series. See Chapter 4.5 of \cite{TV1} for an exploration of these connections.

\begin{proposition}\label{badbadprop}
    Let $P^{\gamma}_{\text{bad}}, P^{\gamma'}_{\text{bad}}$ be the projections onto the collection of bad caps intersected with the appropriate angular sectors as before. Then for a fixed positive number $\sigma_2$ we have
    
    \begin{equation}\lpnorm{(P^{\gamma}_{\text{bad}} P^{\gamma'}_{\text{bad}}f)^{1/2}}{p_c}{\mathbb{T}^n}\lesssim (\la\delta)^{\frac{1}{p_c}-\sigma_2}\bigl(\lpn{P_\gamma f}{2}\lpn{P_{\gamma '}f}{2}\Bigr)^{1/2}.\end{equation}
    Which also concludes the proof of Proposition \ref{arcResult} and therefore Proposition \ref{farGoalProp}.
\end{proposition}

\begin{proof}
    Select two caps, $\omega,\omega'$ such that $P^{\gamma}_{\text{bad}}P_\omega \neq 0$ and $P^{\gamma'}_{\text{bad}}P_{\omega'} \neq 0$. Because of our bound on the number of such $\omega,\omega'$ we can prove a power improvement for the interaction of these two caps and then use the triangle inequality to conclude. By Proposition \ref{fseries} we have 

    \begin{equation}
        \lpnorm{(P_\omega f P_{\omega'} f)^{1/2}}{4}{\T{n}}\lesssim S^{1/4}\Bigl(\lpn{P_{\omega} f}{2}\lpn{P_{\omega'}f}{2}\Bigr)^{1/2}.
    \end{equation}
    Where $S$ is the $\sup{}$over $(a,a')\in (\omega,\omega')$ of

    \begin{equation}\label{sidonSetBoundEqn}
        S(a,a') = |\{(b,b')\in (\omega,\omega'): a + a' = b + b'\}|.
    \end{equation}
     Fix $(a,a')\in (\omega,\omega')$. To bound this set we will only use the geometry of the caps themselves. Note that for a pair $(b,b')$ to contribute to \eqref{sidonSetBoundEqn} for a fixed $(a,a')$ we must have

    \begin{equation}
        b - a = b' - a'.
    \end{equation}
    Define $V = 2\pi L\cap\omega - a$ and $V' = 2\pi L\cap\omega-a'$. The carnality of $S(a,a')$ is the same as the cardinality of $V\cap V'$. 

    Note that $V$ and $V'$ are contained in $\sim (\la\delta)^{\frac{1}{2}}\times ... \times (\la\delta)^{\frac{1}{2}}\times \delta$ boxes in $\R{n}$ that both contain the origin. Denote $n_{V}$ and $n_{V'}$ to be the normal vectors of the hyperplane containing $V$ and $V'$ respectively. Our restriction on $\eta$ so that $d_\omega = n-1$ for every bad cap allows this to be well defined up to a choice of sign.
    
    As $\omega,\omega'$ come from arcs of an annulus with angular separation at least $\delta$ it must be that $n_V$ and $n_{V'}$ also have angular separation at least $\delta$.

    We will illustrate the calculation in $n=2$. Here, we are interested in the intersection of two rectangles that contain the origin that are at least $\delta$ transverse. This set is a parallelogram with height $\lesssim\delta$ and base $\lesssim\frac{\delta}{\sin(\nu)}$ where $\nu$ is the angle formed by $n_V$ and $n_{V'}$. The area can be calculated as 

    \begin{equation*}
        A \lesssim \frac{\delta^2}{\sin(\nu)}.
    \end{equation*}

    Using elementary geometry and the bound $\nu \gtrsim \delta$, we get that this parallelogram can be contained in a ball of $O(1)$ radius. In particular, any two points in the parallelogram are at most $O(1)$ separated. As our lattice points come from an $O_L(1)$ separated set, we have that the intersection contains less than $\lesssim 1$ lattice points.

    This represents a drop in dimension compared to an individual bad cap. In $n=2$ lattice points gather on a length $\sim(\la\delta)^{1/2}$ line in a cap $\omega$. With our bilinear interaction, the points that significantly contribute for a fixed $(a,a')$ must be contained in a $\lesssim(\la\delta)^0  \sim 1$ radius ball which forces the gain using Theorem \ref{fseries} as $S\lesssim 1$.

    To handle higher dimensions it is productive to think of $V$ and $V'$ as $\delta$ neighborhoods of subsets of $n-1$ dimensional hyperplanes. As the normals of these hyperplanes are separated their intersection is an $n-2$ dimensional linear space $W$. The intersection $V\cap V'$ is then the set of all lattice points that are $\lesssim \delta$ from the set $W$ in the direction $n_{V}$ and $\lesssim \delta$ from the set $W$ in the direction $n_{V'}$. Remembering the calculation in $n=2$, this is a subset of all of the lattice points within $O(1)$ of the set $W$. Here, we crucially use that the normals are at least $\delta$ separated. Because the lattice points of $L$ have $O(1)$ separation distance and any line segment contained in a cap has length $\lesssim (\la\delta)^{\frac{1}{2}}$ we have for arbitrary $n$

    \begin{equation}
        S(a,a')\lesssim (\la\delta)^{\frac{n-2}{2}}.
    \end{equation}
    For this bound we also used that for a fixed $(a,a')$ we are only allowed to select one point from $V\cap V'$ as this forces the choice of the other point. As this bound is uniform in $(a,a')$ we have $S\lesssim (\la\delta)^{\frac{n-2}{2}}$.
    Applying Theorem \ref{fseries} now gives us
    \begin{equation}
        \lpn{(P_\omega f P_{\omega'}f)^{1/2}}{4}\lesssim (\la\delta)^{\frac{n-2}{8}}\Bigl(\lpn{P_\omega f}{2}\lpn{P_{\omega'}f}{2}\Bigr)^{1/2}.
    \end{equation}
     If $n=2$, then we interpolate with $L^2\rightarrow L^\infty$ estimates. Specifically, as $\#\omega \lesssim (\la\delta)^{1/2}$ we have

     \begin{equation*}
         \lpn{(P_\omega f P_{\omega'}f)^{1/2}}{\infty}\lesssim (\la\delta)^{\frac{1}{4}}\Bigl(\lpn{P_\omega f}{2}\lpn{P_{\omega'}f}{2}\Bigr)^{1/2}.
     \end{equation*}
    Which after interpolation yields
    \begin{equation*}
        \lpnorm{(P_\omega f P_{\omega'}f)^{1/2}}{6}{\mathbb{T}^2}\lesssim (\la\delta)^{\frac{1}{12}}\Bigl(\lpn{P_\omega f}{2}\lpn{P_{\omega'}f}{2}\Bigr)^{1/2},
    \end{equation*}
    which is better than a $(\la\delta)^{\frac{1}{6}}$ bound. If $n=3$, then $p_c=4$ then this bound is sufficient as $\frac{1}{8}\leq \frac{1}{4}=\frac{1}{p_c}$. If $n\geq 4$ we interpolate with the trivial $L^2\rightarrow L^2$ estimates to get

    \begin{equation*}
        \lpnorm{(P_\omega f P_{\omega'}f)^{1/2}}{p_c}{\mathbb{T}^n}\lesssim (\la\delta)^{\frac{n-2}{2(n+1)}}\Bigl(\lpn{P_\omega f}{2}\lpn{P_{\omega'}f}{2}\Bigr)^{1/2},
    \end{equation*}
    which is better than $(\la\delta)^{\frac{n-1}{2(n+1)}}$. We can conclude with the triangle inequality and $\#C_{\text{bad}} \lesssim (\la\delta)^{\eta n}$ supposing, as we may, that $\eta$ is small enough. We will restrict to $n\geq 4$ but $n=2,3$ follow similarly. Write

    \begin{align*}
        \lpnorm{(P^{\gamma}_{\text{bad}} P^{\gamma'}_{\text{bad}}f)^{1/2}}{p_c}{\mathbb{T}^n}&=\lpnorm{(\sum_{\omega\in C_{\text{bad}}^\gamma} P_{\omega}f\sum_{\omega'\in C_{\text{bad}}^{\gamma'}} P_{\omega'}f)^{1/2}}{p_c}{\mathbb{T}^n} \\
        &\leq \bigl(\sum_{(\omega,\omega')\in C_{\text{bad}}^\gamma\times C_{\text{bad}}^{\gamma'}}\lpnorm{P_\omega f P_{\omega'}f}{p_c/2}{\T{n}}\Bigr)^{1/2}\\
        &\lesssim(\la\delta)^{\frac{n-2}{2(n+1)}}\Bigl(\sum_{(\omega,\omega')\in C_{\text{bad}}^\gamma\times C_{\text{bad}}^{\gamma'}}\lpn{P_\omega f}{2}\lpn{P_{\omega'}f}{2}\Bigr)^{1/2}\\
        &\lesssim(\la\delta)^{\frac{n-2}{2(n+1)}}\Bigl(\sum_{(\omega,\omega')\in C_{\text{bad}}^\gamma\times C_{\text{bad}}^{\gamma'}}\lpn{P_{\gamma} f}{2}\lpn{P_{\gamma'}f}{2}\Bigr)^{1/2}\\
        &\lesssim (\la\delta)^{\frac{n-2}{2(n+1)}+\eta n}\Bigl(\lpn{P_{\gamma} f}{2}\lpn{P_{\gamma'}f}{2}\Bigr)^{1/2}
    \end{align*}

Select $\eta< \frac{1}{4n(n+1)}$ when $n\geq 4$. This proves the result with $\sigma_2 = \frac{1}{4(n+1)}$ when $n\geq 4$. A similar calculation shows it is sufficient to take $\eta<\frac{1}{48}$ when $n=2$ and $n=3$ to prove the result with $\sigma_2=\frac{1}{24}$ and $\sigma_2=\frac{1}{16}$ respectively.
\end{proof}

By selecting $\sigma = \min\{\sigma_1,\sigma_2\}$, Proposition \ref{firstThreeTermsBoundsDecplProp} and Proposition \ref{badbadprop} imply Proposition \ref{arcResult} which bounds the far terms. These choices of $\eta$ also obviously satisfy $\eta<1/2$ which is necessary to apply this version of the cap counting result. For a given $n$ we select $\eta$ that satisfies the discussed conditions. Once this choice is fixed, we choose the $\epsilon$ in the proof of Proposition \ref{firstThreeTermsBoundsDecplProp} to be such that $\la^\epsilon = (\la\delta)^{\frac{\eta}{2(n+1)}}$. This allows us to conclude our bound for the far terms. 

\section{Handling the Diagonal Terms}\label{diagTermsSection}
\subsection{Bounding the diagonal terms assuming microlocal estimates}
To bound the diagonal terms we will use microlocal operators $Q_\nu$ that act on functions on the torus as was done in \cite{BHS1}, specifically Section 6 of that paper. While the operator here will be the same as the one defined in that paper up to a change of variables, the estimates we will use will look slightly different as we have not replaced our projection operators with a smoothed out approximation. In this section we will define the $Q_\nu$ and assume three estimates involving it, then show how this allows us to finish the argument. Section \ref{microProofsSection} is then dedicated to proving these estimates.

Let us define the $Q_\nu$ where $\nu$ is taken from the same set that defined $P_\nu$. Let $\sum \psi(x) =1$ be a smooth partition of unity of $\T{n}$. We demand that the support of each $\psi$ be small enough so that once $\T{n}$ is identified with the fundamental domain of $L^*$ in $\R{n}$, $\psi f$ has a periodic extension via $L^*$ to all of $\R{n}$ for any $f$. This can be done by taking $\supp{\psi}$ to be contained in a ball of sufficiently small radius. This radius only depends on the chosen lattice $L^*$ and dimension $n$. There are $O(1)$ cutoffs $\psi$ and its properties do not depend on the spectral parameter.

For each $\psi$ in this partition define a $\tilde{\psi}$ that is $1$ on the support of $\psi$ and whose support is small enough such that $\tilde{\psi}f$ can be extended for every $f$ in the manner previously described. We shall use the $\psi$ to partition the torus. Technically, there exists a $Q_\nu$ for each $\psi$ but as we will always analyze after applying the triangle inequality we suppress this dependence.

Let $\sum_\nu \beta_\nu(\xi) = 1$ be a partition of unity on $S^{n-1}$ such that $\beta_\nu$ is supported in a $2\theta_0$ cap centered at $\nu\in S^{n-1}$. Also write $\beta_\nu$ for the degree zero homogeneous extension of these bump functions such that they serve as a partition of unity for all $\xi$ away from the origin. Additionally, let $\tilde{\beta}(\xi)$ be a bump function that is supported on $[1/4,4]$ and identically 1 on $[1/2,2]$. Define the kernel

\begin{equation*}
    Q_\nu(x,y) \coloneq \tilde{\psi}(x)\int_{\R{n}}e^{2\pi i\langle x-y,\xi\rangle}\beta_\nu(\xi/|\xi|)\tilde{\beta}(2\pi|\xi/\la|) d\xi.
\end{equation*}
Where $x\in \T{n}$ and $y$ will be integrated over $\T{n}$. It is important for our purposes that $\tilde{\psi}$ is compactly supported in such a way that $Q_\nu f$ is again a well defined function on the torus. In that light, if we identify $\T{n}$ with a tiling of $\R{n}$ we initially define $Q_\nu f$ on the fundamental domain then consider its periodic extension which is well defined because of the properties of $\tilde{\psi}(x)$.

\begin{remark}
    Constructing the $Q_\nu$ in this context obscures the subtlety that went into their construction. This is because of a property of the torus; the geodesic flow is given by straight lines whose direction does not depend on the value of $x$. In the case of general manifolds of non-positive curvature, the symbols of $Q_\nu$ were constructed to be invariant under the geodesic flow and depended both on $x$ and $\xi$. Additionally, $\psi(x)$ selects out a coordinate chart of $\T{n}$, but in the general case the partition of unity also selected out directions $\xi$ as well. Working on the torus simplifies our analysis at the risk of making the definition feel mystical. One should confer \cite{BHS1} Section 4 for an in depth explanation of the construction as well as properties the $Q_\nu$ that are not relevant for this work.
\end{remark}
There are 3 properties of the $Q_\nu$ that we shall use to conclude. 

\begin{proposition}\label{mircoLocalProperties}
 Fix a $\psi$ from the partition of unity and consider its associated mircrolocal cutoffs $Q_\nu$. Suppose $(\nu,\nu')$ are such that $|\nu-\nu'|\geq C\theta_0$ for some fixed $C>1$ that can be taken sufficiently small. Then for every positive integer $N$ we have

\begin{align}
    &\lpnorms{(I-\sum_{\nu}Q_\nu)\psi P_{\la,1}f}{p}{\T{n}} \lesssim_N \la^{-N}\lpnorm{f}{2}{\T{n}}\label{microInsertEqn}\\
    &\lpnorm{Q_{\nu'}\psi P_{\nu} f}{p}{\T{n}}\lesssim_{N} \la^{-N}\lpnorm{f}{2}{\T{n}}\label{microProjectorAngleEqn} \\
    &\sup_{\nu}\lpnorm{Q_{\nu}\psi P_{\la,\delta} f}{ p_c}{\T{n}}\lesssim_\kappa (\la\delta)^{\mu_1(p_c)}\lpnorm{f}{2}{\T{n}}\label{kakeyaNikodymEqn}
\end{align}
\end{proposition}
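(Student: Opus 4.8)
The plan is to transfer everything to $\R{n}$: since each $\psi$ has compact support in (a translate of) the fundamental domain of $L^*$, the functions $\psi P_{\la,1}f$, $\psi P_\nu f$, $\psi P_{\la,\delta}f$ are compactly supported on $\R{n}$ with Fourier transforms $\sum_{k_L}\hat f(k_L)c_{k_L}\,\hat\psi(\cdot-k_L)$, sums over polynomially many (in $\la$) lattice points whose coefficients have $\ell^2$-norm $\lesssim\lpn{f}{2}$ by the $L^2$-normalization of the $e_{k_L}$; hence polynomial-in-$\la$ prefactors are harmless against any power saving or rapid decay. Under this transfer, \eqref{microInsertEqn} and \eqref{microProjectorAngleEqn} become non-stationary-phase statements about the Schwartz decay of $\hat\psi$, while \eqref{kakeyaNikodymEqn}---the substantive one---is reduced, one angular sector at a time, to the $L^2$-based restriction (Stein--Tomas) estimate for a cap of $\la S^{n-1}$, which crucially is \emph{not} lossy at $p_c$.

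For \eqref{microInsertEqn}: since $\sum_\nu\beta_\nu\equiv1$ on $S^{n-1}$ and $\tilde\beta(2\pi|\xi/\la|)\equiv1$ on the frequency shell $|\xi|\sim\la$ carrying $P_{\la,1}f$, the operator $\sum_\nu Q_\nu$ acts on such functions as $\tilde\psi(x)\,m(D)$ with $m(\xi)=\tilde\beta(2\pi|\xi/\la|)$ a radial multiplier equal to $1$ in an $O(1)$-neighbourhood of $\la S^{n-1}$. Thus $(I-\sum_\nu Q_\nu)\psi P_{\la,1}f=\tilde\psi\bigl(\psi P_{\la,1}f-m(D)[\psi P_{\la,1}f]\bigr)$ has Fourier transform supported where $m\neq1$, i.e.\ at distance $\gtrsim\la$ from $\la S^{n-1}$; there $|\hat\psi(\xi-k_L)|\lesssim_N\la^{-N}(1+|\xi-k_L|)^{-n-1}$, so summing over the lattice points and bounding $\tilde\psi$ trivially gives $\lesssim_N\la^{-N}\lpn{f}{2}$ in $L^2$ and $L^\infty$, hence in every $L^p(\T{n})$. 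For \eqref{microProjectorAngleEqn}: the symbol of $Q_{\nu'}$ lives where $\xi/|\xi|$ is within $2\theta_0$ of $\nu'$ and $|\xi|\sim\la$, while $\psi P_\nu f$ is assembled from $\hat\psi$-bumps at lattice points $k_L$ with $2\pi k_L/|2\pi k_L|\in a_\nu$; taking $C$ larger than the sum of the two aperture constants forces $|\xi-k_L|\gtrsim\la\theta_0=\la\delta\ge\la^{\kappa}$, whence $|\hat\psi(\xi-k_L)|\lesssim_M\la^{-\kappa M}(1+|\xi-k_L|)^{-n-1}$; choosing $M$ large (depending on $N$ and $\kappa$) and absorbing the polynomial prefactors gives $\lesssim_N\la^{-N}\lpn{f}{2}$ in $L^\infty$, hence in $L^p$. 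This is precisely where $\delta>\la^{-1+\kappa}$ enters: at $\kappa=0$ the frequency separation is only $O(1)$, no rapid decay is available, and the method cannot reach the standard torus.

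For \eqref{kakeyaNikodymEqn} I would first discard, via \eqref{microProjectorAngleEqn} and the angular decomposition $P_{\la,\delta}f=\sum_\nu P_\nu f$, all terms $Q_{\nu_0}\psi P_\nu f$ with $|\nu-\nu_0|\ge C\theta_0$ (each $O_N(\la^{-N})\lpn{f}{2}$, and there are $\lesssim\la^{n-1}$ of them), reducing to the $O(1)$ nearby terms, i.e.\ to a single angular sector. Now $P_\nu f$ is band-limited and $L^*$-periodic with frequencies at the lattice points of a single $\delta$-cap of the $\delta$-annulus of $\la S^{n-1}$; one checks that $Q_{\nu_0}\psi$ preserves that sector up to a bounded enlargement and, modulo an $O_N(\la^{-N})$ error, one is left with an honest torus function whose Fourier support is at lattice points in such a thickened $\delta$-cap, times the bounded cutoff $\psi$. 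Passing to $\R{n}$ and invoking the transference principle underlying Proposition \ref{decpl} (spatial localization at scale $\delta^{-1}$ against frequency uncertainty $\delta$, cf.\ Corollary 9 of \cite{H1}), it then suffices to prove the local Stein--Tomas bound
\[
\lpnorm{\mathrm{Ext}_{\sigma}g}{p_c}{B_{\delta^{-1}}}\;\lesssim\;(\la\delta)^{\mu_1(p_c)}\,\lpn{g}{2}
\]
for the extension operator of a single $\delta$-cap $\sigma$ of $\la S^{n-1}$ thickened by $\delta$. Here curvature is essential, and---this being the whole point of arguing microlocally rather than with decoupling---the estimate is $L^2$-based, hence carries \emph{no} $\la^\epsilon$-loss: an affine parabolic rescaling carries $\sigma$ (thickened by $\delta$) onto the $\tfrac{1}{\la\delta}$-neighbourhood of the unit paraboloid over the unit ball, a genuine sub-unit neighbourhood because $\la\delta\ge\la^\kappa\gg1$ makes the sagitta $\la\delta^2$ exceed the thickness $\delta$; the unit-scale $L^2$ restriction theorem at $p_c=\tfrac{2(n+1)}{n-1}$ applies with no loss, and tracking the Jacobian of the rescaling converts it into $(\la\delta)^{\mu_1(p_c)}$ (indeed with a spare factor $\delta^{1/(n+1)}\le1$). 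Re-inserting the $O(1)$ sector bounds finishes \eqref{kakeyaNikodymEqn}.

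The main obstacle I anticipate is the passage, inside \eqref{kakeyaNikodymEqn}, from the fuzzy $\hat\psi$-smeared Fourier support of $Q_{\nu_0}\psi P_\nu f$ to the clean ``lattice points in a thickened $\delta$-cap'' picture: one must decompose the contributions dyadically in the distance to the cap and show that the smooth cutoffs $\psi,\tilde\psi,\beta_{\nu_0},\tilde\beta$ only enlarge the effective support by a bounded amount and produce errors decaying faster than any power of $\la$, which once more rests on $\delta>\la^{-1+\kappa}$. Conceptually, however, the crux is simply the replacement of \emph{decoupling over all caps} (lossy at $p_c$) by \emph{the $L^2$ restriction estimate on a single sector} (lossless); the parabolic rescaling and Jacobian bookkeeping are routine, and \eqref{microInsertEqn}--\eqref{microProjectorAngleEqn} are standard non-stationary-phase arguments.
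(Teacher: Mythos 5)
Your treatment of \eqref{microInsertEqn} and \eqref{microProjectorAngleEqn} is correct and essentially the same as the paper's: the paper integrates by parts in the oscillatory integral in $y$ (non-stationary phase), which after a Fourier transform on $\psi$ is precisely the Schwartz decay of $\hat\psi$ you invoke. The separations $\gtrsim\la$ (radial) and $\gtrsim\la\theta_0=\la\delta\ge\la^{\kappa}$ (angular) give enough decay to beat the polynomially-many lattice points and cutoffs.

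Your argument for \eqref{kakeyaNikodymEqn}, which is the substantive estimate, has a genuine gap. You reduce (correctly) to a single angular sector, treat $Q_{\nu_0}\psi$ as a harmless bounded cutoff, and then want to transfer to $\R{n}$ and apply a local Stein--Tomas estimate for a $\delta$-thickened $\delta$-angular cap $\sigma$ of $\la S^{n-1}$. The continuous Stein--Tomas bound you compute via parabolic rescaling is correct, and indeed carries the spare factor $\delta^{1/(n+1)}$. But the transference of an $L^2\to L^{p_c}$ inequality from $\R{n}$ back to $\T{n}$ is not lossless --- unlike the transference for a decoupling inequality (Corollary 9 of \cite{H1}), which has the same $L^{p_c}$ norm on both sides so the normalization factors cancel. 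Here, periodicity gives $\lpn{f}{L^{p_c}(\T{n})}\sim\delta^{n/p_c}\lpn{f}{L^{p_c}(B_{\delta^{-1}})}$, while localizing $f$ on $B_{\delta^{-1}}$ produces $\lpn{F}{L^{2}(\R{n})}\sim\delta^{-n/2}\lpn{f}{L^{2}(\T{n})}$, so the transfer multiplies the continuous constant by $\delta^{n(1/p_c-1/2)}=\delta^{-n/(n+1)}$. The spare $\delta^{1/(n+1)}$ does not cover this, and the net bound your chain produces is $(\la\delta)^{\mu_1(p_c)}\delta^{-(n-1)/(n+1)}$, a polynomial-in-$\la$ loss for every $n\ge2$ once $\delta\ge\la^{-1+\kappa}$ with $\kappa<1$. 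The underlying obstruction is real: a $\delta$-angular sector of $A_{\la,\delta}$ can carry as many as $(\la\delta)^{n-1}$ lattice points, so log-convexity for $P_\nu f$ alone gives only $(\la\delta)^{2\mu_1(p_c)}$, and decoupling within the sector recovers $\mu_1(p_c)$ only up to $\la^\epsilon$. Nothing in the Fourier support by itself rules out a loss, which is why the estimate is not accessible by restriction theory after discarding the spatial cutoff.

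The quantity your reduction throws away --- ``$Q_{\nu_0}\psi$ is just a bounded cutoff'' --- is exactly what the paper uses. In the paper's proof, after a $TT^*$ reduction and a local/global split of the smoothed projector into dyadic time scales $2^k$, the global piece $\tilde G_{\la,k}$ is evaluated by lifting $\cos(2\pi t\sqrt{-\Delta_{\T{n}}})$ to $\R{n}$ as a sum over lattice translates $j_{L^*}$, and the combination of the spatial cutoff $\tilde\psi$ with the angular cone cutoff in $Q_\nu$ is used, via stationary phase on the wave kernel, to show that only $O(2^{kn}\delta^{n-1})$ of the $\sim 2^{kn}$ translates at distance $\sim 2^k$ contribute beyond rapid decay. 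That is an $L^1\to L^\infty$ gain that interpolates against a trivial $L^2\to L^2$ bound and sums over $2^k\lesssim\delta^{-1}$ to give exactly $(\la\delta)^{2\mu_1(p_c)}$ at the $TT^*$ level. It is a spatial Dirichlet-domain count, not a Fourier-support restriction estimate, and it vanishes once $\psi$ is dropped.
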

The first two estimates follow from non-stationary phase and show that the given terms enjoys rapid decay in $\la$, and as such will be treated as unimportant errors. The third estimate is more difficult to prove. It does follow directly from equation (6.18) in \cite{BHS1} after an application of duality and orthogonality. The proof of all three of these estimates will be given in Section \ref{microProofsSection}. For now, let us assume them and see how they allow us to conclude.

\begin{proposition}\label{diagGoalProp}
    Assume the results in Proposition \ref{mircoLocalProperties}. Let $\Upsilon^{\text{diag}}f$ be as above. Then we have

    \begin{equation}
        \lpnorm{(\Upsilon^{\text{diag}}f)^{1/2}}{p_c}{\mathbb{T}^n} = \lpnorm{\Upsilon^{\text{diag}}f}{p_c/2}{\mathbb{T}^n}^{1/2} \lesssim (\la\delta)^{\frac{1}{p_c}}||f||_2.
    \end{equation}
\end{proposition}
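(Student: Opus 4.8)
The plan is to expand $\Upsilon^{\text{diag}}f = \sum_{(\nu,\nu')\in \Xi_{\theta_0}} P_\nu f\, P_{\nu'} f$ and to insert the microlocal cutoffs. First I would note that $P_\nu f = P_\nu P_{\la,1} f$, so after applying a partition of unity $\psi$ on $\T{n}$ and using \eqref{microInsertEqn} we may (up to rapidly decaying errors, which are harmless since $N$ is arbitrary) replace $\psi P_\nu f$ by $\sum_{\mu} Q_\mu \psi P_\nu f$. By \eqref{microProjectorAngleEqn}, the terms with $|\mu-\nu|\geq C\theta_0$ are again rapidly decaying, so only the $O(1)$ values of $\mu$ with $|\mu-\nu|< C\theta_0$ survive: that is, $\psi P_\nu f = \sum_{\mu:\, |\mu-\nu|\lesssim \theta_0} Q_\mu \psi P_\nu f + O(\la^{-N})$. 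Doing this for both factors and for both $P_\nu f$ and $P_{\nu'} f$, and using the triangle inequality to sum over the $O(1)$ partition-of-unity pieces $\psi$ (here one should be slightly careful that the product $P_\nu f\, P_{\nu'} f$ restricted to $\mathrm{supp}\,\psi$ is controlled by $(\psi P_\nu f)(\psi' P_{\nu'} f)$ for the relevant cutoffs, using that $\sum \psi = 1$ forces $\psi^2$-type majorization or simply that $|\sum_\psi g_\psi h_\psi| \le \sum_\psi |g_\psi||h_\psi|$), we reduce to bounding
\begin{equation*}
    \Bigl\lVert \sum_{(\nu,\nu')\in \Xi_{\theta_0}} \bigl(Q_{\mu(\nu)}\psi P_\nu f\bigr)\bigl(Q_{\mu'(\nu')}\psi P_{\nu'} f\bigr) \Bigr\rVert_{L^{p_c/2}(\T{n})}^{1/2},
\end{equation*}
where $\mu(\nu),\mu'(\nu')$ range over the $O(1)$ directions near $\nu,\nu'$.

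Next I would exploit that the diagonal condition $(\nu,\nu')\in\Xi_{\theta_0}$ means $|\nu-\nu'| < C\theta_0$, so all the directions $\nu,\nu',\mu(\nu),\mu'(\nu')$ appearing in a single product term lie in a common $O(\theta_0) = O(\delta)$ cap. Reorganize the sum by fixing a representative direction: group the terms so that everything is controlled by $\sum_{\mu} |Q_\mu \psi P_{\la,\delta} f|^2$ (note $P_\nu f = P_\nu P_{\la,\delta} f$ and $\sum_{\nu:\, \nu \text{ near } \mu} P_\nu = $ a Fourier projection onto a slightly fattened angular sector, which is absorbed into $P_{\la,\delta}$). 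Concretely, after Cauchy–Schwarz in the finitely many ($O(1)$) pairs of nearby directions, $|\Upsilon^{\text{diag}}f| \lesssim \sum_{\mu}|Q_\mu \psi P_{\la,\delta}f|^2 + O(\la^{-N})$ pointwise (plus cross terms, also of this form). Then
\begin{equation*}
    \lpnorm{\Upsilon^{\text{diag}}f}{p_c/2}{\T{n}} \lesssim \Bigl\lVert \sum_\mu |Q_\mu \psi P_{\la,\delta} f|^2 \Bigr\rVert_{L^{p_c/2}(\T{n})} \leq \sum_\mu \lpnorm{Q_\mu \psi P_{\la,\delta} f}{p_c}{\T{n}}^2,
\end{equation*}
by the triangle inequality in $L^{p_c/2}$ (valid since $p_c \ge 2$, so $p_c/2 \ge 1$), each term being $\lpnorm{(Q_\mu\psi P_{\la,\delta}f)^2}{p_c/2}{\T{n}} = \lpnorm{Q_\mu \psi P_{\la,\delta} f}{p_c}{\T{n}}^2$.

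Now apply the Kakeya–Nikodym-type estimate \eqref{kakeyaNikodymEqn}: each $\lpnorm{Q_\mu \psi P_{\la,\delta} f}{p_c}{\T{n}}^2 \lesssim (\la\delta)^{2\mu_1(p_c)} \lpnorm{f}{2}{\T{n}}^2$ is too lossy if summed naively over the $\sim \delta^{-(n-1)}$ directions $\mu$. The fix is to instead bound $\sum_\mu \lpnorm{Q_\mu \psi P_{\la,\delta} f}{p_c}{\T{n}}^2 \lesssim (\la\delta)^{2\mu_1(p_c)} \sum_\mu \lpnorm{Q_\mu\psi P_{\la,\delta}f}{2}{\T{n}}^2 \cdot (\text{the ratio})$ — more precisely, I expect \eqref{kakeyaNikodymEqn} to really be used in the form $\lpnorm{Q_\mu \psi P_{\la,\delta}f}{p_c}{\T{n}} \lesssim (\la\delta)^{\mu_1(p_c)} \lpnorm{Q_\mu \psi P_{\la,\delta}f}{2}{\T{n}}$ (this is what the referenced (6.18) of \cite{BHS1} after duality should give, normalized to the $Q_\mu$-piece rather than all of $f$), and then $\sum_\mu \lpnorm{Q_\mu \psi P_{\la,\delta} f}{2}{\T{n}}^2 \lesssim \lpnorm{P_{\la,\delta}f}{2}{\T{n}}^2 \le \lpnorm{f}{2}{\T{n}}^2$ by almost-orthogonality of the $Q_\mu$ (their symbols have finitely-overlapping supports in $\xi$). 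Combining, $\lpnorm{\Upsilon^{\text{diag}}f}{p_c/2}{\T{n}} \lesssim (\la\delta)^{2\mu_1(p_c)}\lpnorm{f}{2}{\T{n}}^2 = (\la\delta)^{2/p_c}\lpnorm{f}{2}{\T{n}}^2$, and taking square roots gives the claim. The main obstacle, and the reason the paper emphasizes the flexibility of the torus setting, is precisely this last step: one must be able to use the $\ell^2$-sum $\sum_\mu \lpnorm{Q_\mu\psi P_{\la,\delta}f}{2}{\T{n}}^2$ together with orthogonality, rather than an $\ell^{p_c}$-sum, which requires that the single-tube estimate \eqref{kakeyaNikodymEqn} be applied to each microlocalized piece separately with the $L^2$ norm of that piece on the right — the decoupling/orthogonality savings only works because $P_{\la,\delta}$ is genuinely a Fourier multiplier here and the $Q_\mu$ are honestly orthogonal. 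Verifying that \eqref{kakeyaNikodymEqn} as stated (with $\lpnorm{f}{2}{\T{n}}$ on the right) can be upgraded/relocalized to this per-piece $L^2$ form — or that the stated form already suffices after peeling off one $Q_\mu$ and controlling the rest by orthogonality — is the delicate point I would need to nail down.
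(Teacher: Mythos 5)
Your overall strategy is on the right track, but the key step you flag as ``delicate'' is where you drift from what actually works, and the resolution you sketch is not the one the paper uses. After Cauchy--Schwarz reduces matters to $\bigl(\sum_\nu \|\psi P_\nu f\|_{p_c}^2\bigr)^{1/2}$, you then absorb the honest Fourier projectors $P_\nu$ into $P_{\la,\delta}$, arriving at $\sum_\mu \|Q_\mu\psi P_{\la,\delta} f\|_{p_c}^2$. That throws away the source of orthogonality, and you are then forced to ask for either (a) a ``relocalized'' version of \eqref{kakeyaNikodymEqn} of the form $\|Q_\mu\psi P_{\la,\delta}f\|_{p_c}\lesssim(\la\delta)^{\mu_1(p_c)}\|Q_\mu\psi P_{\la,\delta}f\|_2$ (which is \emph{not} what \eqref{kakeyaNikodymEqn} or (6.18) of \cite{BHS1} gives you; the $Q_\mu$ are not idempotent and $Q_\mu\psi$ does not factor out of the right-hand side), or (b) almost-orthogonality of the $Q_\mu$ (true at the level of symbols, but weaker than what you need, and it interacts awkwardly with the spatial cutoff $\psi$). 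Neither is established, so as written your proof has a genuine gap at exactly the point you identify.

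The paper avoids this by \emph{not} absorbing $P_\nu$: after inserting the cutoffs and discarding the rapidly decaying terms via \eqref{microInsertEqn} and \eqref{microProjectorAngleEqn}, one is left with $\sum_\nu\bigl\|\sum_{\nu':|\nu'-\nu|<C\theta_0} Q_{\nu'}\psi P_{\la,\delta} P_\nu f\bigr\|_{p_c}^2$. Here \eqref{kakeyaNikodymEqn} is read as an operator-norm bound $\|Q_{\nu'}\psi P_{\la,\delta}\|_{2\to p_c}\lesssim(\la\delta)^{\mu_1(p_c)}$, applied to the $L^2$ function $P_\nu f$, and the sum closes because the $P_\nu$ are genuine Fourier multipliers onto disjoint frequency sets, so $\sum_\nu\|P_\nu f\|_2^2=\|f\|_2^2$ \emph{exactly}. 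This is precisely the ``we can use $\ell^2$ and orthogonality instead of $\ell^{p_c}$'' remark in the introduction: the orthogonality is carried by the discrete projectors $P_\nu$, not by the microlocal cutoffs $Q_\mu$. If you rewrite your final step keeping $P_\nu$ in place and treating $Q_{\nu'}\psi P_{\la,\delta}$ as the fixed $L^2\to L^{p_c}$-bounded operator, the gap disappears.
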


\begin{proof}
    Note that for a fixed $\nu$, there are only $O(1)$ values of $ \nu'$ such that $(\nu,\nu')\in \Xi_{\theta_0}$. This fact and the triangle inequality give

    \begin{align*}
        \lpn{\Upsilon^{diag}f}{p_c/2}^{1/2}&= \lpn{\sum_{(\nu,\nu')\in \Xi_{\theta_0}}P_{\nu}fP_{\nu'}f}{p_c/2}^{1/2}  \\
        & \leq \Bigl(\sum_{(\nu,\nu')\in \Xi_{\theta_0}}\lpn{P_{\nu}fP_{\nu'}f}{p_c/2}\Bigr)^{1/2}\\
        &\leq \Bigl(\sum_{(\nu,\nu')\in \Xi_{\theta_0}}\lpn{P_{\nu}f}{p_c}\lpn{P_{\nu'}f}{p_c}\Bigr)^{1/2}\\
        &\leq \Bigl(\sum_{(\nu,\nu')\in \Xi_{\theta_0}}\lpn{P_{\nu}f}{p_c}^2\Bigr)^{1/4}\Bigl(\sum_{(\nu,\nu')\in \Xi_{\theta_0}}\lpn{P_{\nu'}f}{p_c}^2\Bigr)^{1/4}\\
        &\lesssim \Bigl(\sum_{\nu}\lpn{P_{\nu}f}{p_c}^2\Bigr)^{1/2}.
    \end{align*}
We now introduce the partition of unity $\psi$. By the triangle inequality it is sufficient to bound

\begin{equation*}
    \Bigl(\sum_{\nu}\lpn{\psi P_{\nu}f}{p_c}^2\Bigr)^{1/2}
\end{equation*}
By \eqref{microInsertEqn} and the relation $P_{\nu} = P_{\la,1}P_\nu$, we have

\begin{equation*}
    \Bigl(\sum_{\nu}\lpn{\psi P_{\nu}f}{p_c}^2\Bigr)^{1/2}\lesssim_N\Bigl(\sum_{\nu}\lpn{\sum_{\nu'}Q_{\nu'} \psi P_{\nu}f}{p_c}^2\Bigr)^{1/2} + O(\la^{-N'}).
\end{equation*}
Here $O(\la^{-N'})$ indicates a rapidly decreasing error. As the number of $\nu$ is $O(\la^{c_n})$ for some dimensional constant $c_n$, we can select $N$ depending on the dimension large enough in a manner not depending on $\la$ so that $N'$ is a positive power. The error therefore can be ignored. We will suppress such choices of $N$ from this argument and going forward.

By \eqref{microProjectorAngleEqn}, The only $Q_{\nu'}\psi P_\nu$ which do not enjoy rapid decay are the $O(1)$ values of $\nu'$ such that $|\nu-\nu'|<C\theta_0$ for some $C$. This constant can be chosen small as will be elaborated on in the proof of Proposition \ref{mircoLocalProperties}. We therefore get

\begin{equation*}
    \Bigl(\sum_{\nu}\lpn{\sum_{\nu'}Q_{\nu'}\psi P_{\nu}f}{p_c}^2\Bigr)^{1/2}\lesssim \Bigl(\sum_{\nu}\lpn{\sum_{\nu': |\nu'-\nu|<C\theta_0}Q_{\nu'}\psi P_{\nu}f}{p_c}^2\Bigr)^{1/2} + O(\la^{-N''}).
\end{equation*}

The rapidly decreasing error is not the same as before, but it also can be ignored by a sufficiently large choice of $N$. That there $O(1)$ values of $\nu'$ in the inner sum, the fact that $P_\nu = P_{\la,\delta}P_\nu$, and \eqref{kakeyaNikodymEqn} yield

\begin{align*}
    &\Bigl(\sum_{\nu}\lpn{\sum_{\nu': |\nu'-\nu|<C\theta_0}Q_{\nu'}\psi P_{\nu}f}{p_c}^2\Bigr)^{1/2}\lesssim \Bigl(\sum_{\nu}\Bigl(\sum_{\nu': |\nu'-\nu|<C\theta_0}\lpn{Q_{\nu'}\psi P_{\la,\delta} P_{\nu}f}{p_c}\Bigr)^2\Bigr)^{1/2}\\
    &\lesssim \sup_{\nu}\lpn{Q_\nu\psi P_{\la,\delta}}{2\rightarrow p_c}(\sum_{\nu}\lpn{P_{\nu}f}{2}^2\Bigr)^{1/2}\\
    &=(\la\delta)^{\mu_1(p_c)}\lpn{f}{2} = (\la\delta)^{\frac{1}{p_c}}\lpn{f}{2}.
\end{align*}
Where orthogonality was also used on the last line.
\end{proof}
\begin{remark}
That the bound for $\sup_{\nu}\lpn{Q_\nu\psi P_{\la,\delta}}{2\rightarrow p_c}$ lines up with the (sharp) conjectured bounds is what allows us to prove better estimates for $\Upsilon^{\text{far}}$ without worry of a contradiction. Additionally, the estimates for $\sup_{\nu}\lpn{Q_\nu\psi P_{\la,\delta}}{2\rightarrow p}$ can easily be shown to align with the conjecture for every p and every  $\delta > \la^{-1+\kappa}$ for a fixed positive $\kappa$. The consequences of this for estimates involving large $p$ will be explored in future work. If one wishes to take $\kappa=0$, a slight alteration of the construction of $Q_\nu$ gives essentially the same estimate with an additional $O_\epsilon(\la^{\epsilon})$ factor, and so this method does not appear to be able to prove sharp eigenfunction bounds.
\end{remark}

\section{Proving the mircrolocal estimates}\label{microProofsSection}
We have reduced matters to showing the three estimates in Proposition \ref{mircoLocalProperties} holds. We will do this in two steps. The first two will be proven together, with only the first bound written out explicitly, as they both follow from essentially the same argument. After expanding the operators, an application of non-stationary phase yields rapid decay. Proving the last bound is more involved.

\subsection{Proving proposition \ref{mircoLocalProperties}, estimates \eqref{microInsertEqn} and \eqref{microProjectorAngleEqn}} 
Our goal in this section is to prove the following two estimates.
\begin{align*}
    &\lpnorms{(I-\sum_{\nu}Q_\nu)\psi P_{\la,1}f}{p}{\T{n}} \lesssim_N \la^{-N}\lpnorm{f}{2}{\T{n}}\\
    &\lpnorm{Q_{\nu'}\psi P_{\nu} f}{p}{\T{n}}\lesssim_{N} \la^{-N}\lpnorm{f}{2}{\T{n}} \\
\end{align*}

\begin{proof}[Proof of Proposition \ref{mircoLocalProperties}, estimates \eqref{microInsertEqn} and \eqref{microProjectorAngleEqn}]
We will prove the first inequality in detail. The second follows by a similar argument. To begin let us expand $\sum_{\nu}Q_\nu\psi P_{\la,1}f$ as

\begin{equation}
    \tilde{\psi}(x)\sum_{k_L\in A_{\la,1}}\hat{f}(k_L)\int_{\R{n}}\int_{\R{n}}e^{2\pi i\langle x-y,\xi\rangle+2\pi i\langle k_L, y\rangle}\sum_{\nu}\beta_\nu(\xi/|\xi|)\tilde{\beta}(2\pi|\xi/\la|)\psi(y)dy d\xi.
\end{equation}
Ostensibly the integral in $y$ is taken over $\T{n}$. As $\psi(y)$ is of sufficiently small compact support, once we view the integral over $\T{n}$ as an integral over the fundamental domain in $\R{n}$ we can trivially extend the integral to the full space. This will help with alignment with the other term $\psi P_{\la,1}f$. Using that $\tilde{\psi}(x)\psi(x) = \psi(x)$ by the definition of $\tilde{\psi}(x)$, we can expand $\psi P_{\la,1}f$ as the following using a Fourier transform and inverse Fourier transform,

\begin{equation}
        \tilde{\psi}(x)\sum_{k_L\in A_{\la,1}}\hat{f}(k_L)\int_{\R{n}}\int_{\R{n}}e^{2\pi i\langle x-y,\xi\rangle+2\pi i\langle k_L, y\rangle}\psi(y)dy d\xi.
\end{equation}
$\psi(y)$ is compactly supported in $\R{n}$ so the integrals above obviously converge. The difference of these operators is then

\begin{equation}
    \tilde{\psi}(x)\sum_{k_L\in A_{\la,1}}\hat{f}(k_L)\int_{\R{n}}\int_{\R{n}}e^{2\pi i\langle x-y,\xi\rangle+2\pi i\langle k_L, y\rangle}(1-\tilde{\beta}(2\pi|\xi/\la|))\psi(y)dy d\xi.
\end{equation}
Where we have used $\sum_\nu \beta_\nu = 1$ away from the origin to simplify. Let $\phi = 2\pi i\langle x-y,\xi\rangle+2\pi i\langle k_L, y\rangle$. For each integral in the sum we clearly have

\begin{equation*}
    \partial_y\phi = 0 \iff \xi = k_L.
\end{equation*}
Our cutoff $\tilde{\beta}$ has been constructed such that $\xi\neq k_L$ on the $\xi$ support of the integral. The properties of $1-\tilde{\beta}$ force 

\begin{equation*}
    2\pi|\xi|\not\in [\la/2,2\la].
\end{equation*}
As $k_L\in A_{\la,1}$ implies $2\pi |k_L|\in [\la-1,\la+1]$ we clearly have 

\begin{equation*}
    |\xi-k_L|\geq \frac{1}{4}\la,
\end{equation*}
for $\la$ large enough. Repeated integration by parts in $y$ (non-stationary phase) gives the desired bound. To prove the other inequality one repeats this proof except $\xi$ and $k_L$ will be separated because of the restricted angle not because of the restricted radius. The details are omitted.
\end{proof}

\subsection{Proving Proposition \ref{mircoLocalProperties}, estimate \eqref{kakeyaNikodymEqn}}
We need to prove 

\begin{equation}
    \sup_{\nu}\lpnorm{Q_{\nu}\psi P_{\la,\delta} f}{ p_c}{\T{n}}\lesssim_\kappa (\la\delta)^{\mu_1(p_c)}\lpnorm{f}{2}{\T{n}}.
\end{equation}
Using orthogonality and duality, this is an immediate consequence of (6.18) from \cite{BHS1} once the introduced $\psi$ is accounted for. We will include the proof for completeness. To do so we must introduce a smoothed out version of the projection operator $P_{\la,\delta}$. Let $\rho$ be a real-valued Schwartz class function defined  on $\R{}$ satisfying the following two properties.

\begin{equation*}
    \rho(0)=1, \hspace{.1cm} \supp{\hat{\rho}}\subset \{t: |t|\in (c/2,c)\}.
\end{equation*}
Where $c$ is a sufficiently small constant. We then define the following for $f\in L^2(\T{n})$,

\begin{equation}
    \rho_\la f = \sum_{m=0}^\infty\rho(\delta^{-1}(\la-\la_m))E_ mf.
\end{equation}
Here $E_m f$ projections onto the $m$-th eigenspace of $\sqrt{-\Delta_{\T{n}}}$. We decorate the operator with $\T{n}$ to clarify later expressions. By duality and orthogonality it suffices to prove

\begin{equation}\label{KakeyaNikodymBound}
    \sup_{\nu}\lpnorm{Q_{\nu}\psi \rho_\la f}{ p_c}{\T{n}}\lesssim_\kappa (\la\delta)^{\mu_1(p_c)}\lpnorm{f}{2}{\T{n}}.
\end{equation}
The duality and orthogonality arguments we have been relying on are outlined in Theorem 3.2.1 of \cite{S2}. The above is the form proven in \cite{BHS1}. There, the smoothed out projections $\rho_\la$ are considered for the entirety of the argument whereas we keep the projectors discrete until the present step in order to take advantage of decoupling and cap counting arguments.

\begin{proof}[Proof of Proposition \ref{mircoLocalProperties}, estimate \eqref{kakeyaNikodymEqn}]
To aid in the exposition we shall change variables so that

\begin{equation*}
    Q_\nu(x,y) =\tilde{\psi}(x)\la^n\int_{\R{n}}e^{2\pi i\la\langle x-y,\xi\rangle}\beta_\nu(\xi/|\xi|)\tilde{\beta}(2\pi|\xi|) d\xi.
\end{equation*}
We will import the following two estimates from \cite{BHS1}.

\begin{align*}
    &\lpn{Q_\nu}{L^p(\T{n})\rightarrow L^p(\T{n})}\lesssim 1 \text{ for } 2\leq p \leq \infty,\\
    &\lpn{Q_\nu^*}{L^p(\T{n})\rightarrow L^p(\T{n})}\lesssim 1 \text{ for } 1\leq p \leq 2.
\end{align*}
By $TT^*$ and an application of the above bound for the adjoint, if $\Psi = \rho^2$ it is sufficient to prove

\begin{equation}\label{TTstarbound}
    \lpn{Q_\nu\psi \Psi}{p_c'\rightarrow p_c}\lesssim (\la\delta)^{2\mu_1(p_c)}.
\end{equation}

Let $\beta$ be a smooth compactly supported bump function supported on $(1/2,2)$ such that for $\beta_k \coloneq \beta(t/2^k)$ we have $\sum_{k=-\infty}^\infty\beta_k(t) = 1$ for $t>0$. Also define $\beta_0\coloneq 1- \sum_{k=0}^\infty\beta_k$.

By Fourier's inversion formula, we have the decomposition

\begin{equation}
    \Psi = L_\la + G_\la,
\end{equation}
where 

\begin{equation}\label{localOperator}
    L_\la = \delta\int e^{2\pi i\la t}e^{-2\pi it\sqrt{-\Delta_{\T{n}}}}\beta_0(2\pi|t|)\hat{\Psi}(\delta 2\pi t)dt,
\end{equation}
and 
\begin{equation}\label{globalOperator}
    G_\la = \delta\int e^{2\pi i\la t}e^{-2\pi it\sqrt{-\Delta_{\T{n}}}}(1-\beta_0)(2\pi|t|)\hat{\Psi}(\delta 2\pi  t)dt.
\end{equation}
We interpret the operator $e^{-2\pi it\sqrt{-\Delta_{\T{n}}}}$ by the spectral theorem, that is

\begin{equation*}
    e^{-2\pi it\sqrt{-\Delta_{\T{n}}}}f = \sum_{m=0}^\infty e^{-2\pi it\la_j}E_m f.
\end{equation*}
By Euler's formula we have that 

\begin{equation*}
    e^{-2\pi it\sqrt{-\Delta_{\T{n}}}} = 2\cos(2\pi t\sqrt{-\Delta_{\T{n}}}) + e^{2\pi it\sqrt{-\Delta_{\T{n}}}}.
\end{equation*}
If we replace $ e^{-2\pi it\sqrt{-\Delta_{\T{n}}}}$ by $e^{2\pi it\sqrt{-\Delta_{\T{n}}}}$, the resulting operator has a kernel that enjoys rapid decay in $\la$ (bounded by $\lesssim_N\la^{-N}$ for any integer $N$). This is because the signs of the two exponentials agree and so there are no stationary points. As such, if 

\begin{equation}\label{globalOperator2}
    \tilde{G}_\la = 2\delta\int e^{2\pi i\la t}\cos(2\pi t\sqrt{-\Delta_{\T{n}}})(1-\beta_0)(2\pi |t|)\hat{\Psi}(\delta 2\pi  t)dt,
\end{equation}
it suffices to bound $Q_\nu \psi  L_\la$ and $Q_\nu\psi \tilde{G}_
\la$ by the bound in \eqref{TTstarbound}. 

To bound $Q_\nu \psi L_\la$ we do not need the microlocal cutoffs, and so we will just bound $L_\la$ and use that $Q_\nu \psi$ is $L^{p_c}\rightarrow L^{p_c}$ bounded. After using the spectral theorem to expand we have

\begin{align*}
    & L_\la = \delta\sum_{m=0}^\infty \Bigr(\int e^{2\pi it(\la-\la_m)}\beta_0(2\pi |t|)\hat{\Psi}(\delta 2\pi t) dt\Bigl) E_m\\& = \delta \sum_m s(\la;\la_m)E_m.
\end{align*}
Where $s(\la;\la_m)$ is the inverse Fourier transform of $\beta_0(2\pi t)\hat{\Psi}(\delta2\pi  t)$ with respect to $t$. Because of the fixed compact support of $\beta_0(2\pi t)$ the product $\beta_0(2\pi t)\hat{\Psi}(\delta2\pi  t)$ has compact support independent of $\la$. Its inverse Fourier transform is therefore a Schwartz class function, and the multiplier $s(\la;\la_m)$ enjoys the bound

\begin{equation*}
    |s(\la;\la_m)|\lesssim_N (1+|\la-\la_m|)^{-N},
\end{equation*}
for any choice of $N$ which leads to $\sum_m s(\la;\la_m)E_m$ obeying the universal estimates. The universal estimate at $p_c$ is the bound for spectral projectors of with 1 in the following expression

\begin{equation}
    \lpn{P_{\la,1}}{p_c'\rightarrow p_c}\lesssim\la^{2\mu_1(p_c)}.
\end{equation}

See Theorem 2.1 in \cite{S3}. To see that the universal estimates apply we break up the multiplier based on $|\la-\la_m|$. In particular,

\begin{equation}
     L_\la = \delta \sum_m s(\la;\la_m)E_m = \delta \sum_{m: |\la-\la_m|<1}s(\la;\la_m)E_m + \delta \sum_{j=1}^\infty\sum_{m: |\la-\la_m|\sim 2^j}s(\la;\la_m)E_m.
\end{equation}
As the multiplier $s(\la;\la_m)$ is bounded the first terms obeys the universal estimates. For the second terms, the set $|\la-\la_m|\sim 2^s$ can be broken into $2^s$ intervals of length $\sim 1$ that obey the universal estimates. The rapid decay of $s(\la;\la_m)$ ensures the multiplier is bounded by $2^{-jN}$ for any $N$ of our choosing. Therefore, by the triangle inequality.

\begin{equation}
    \lpn{L_\la}{p_c'\rightarrow p_c}\lesssim_N \delta\la^{2\mu_1(p_c)} +  \delta\la^{2\mu_1(p_c)}\sum_j 2^{-Nj}2^j \lesssim \delta\la^{2\mu_1(p_c)}.
\end{equation}
For $N$ chosen sufficiently large in a manner that does not depend on $\la$. We then have by $TT^*$ that 

\begin{equation}
    \lpn{L_\la}{2\rightarrow p_c}\lesssim \delta^{1/2}\la^{\mu_1(p_c)}.
\end{equation}
To bound $G_\la$ we will continue to use our time cutoff $\beta_k$. In particular, define 

\begin{equation}
    \tilde{G}_{\la,k} = 2\delta\int e^{2\pi i\la t}\cos(2\pi t\sqrt{-\Delta_{\T{n}}})\beta_k(2\pi |t|)\hat{\Psi}(\delta2\pi  t)dt.
\end{equation}
By Huygens' principle we can restrict to $2^k \lesssim \delta^{-1}$. By summing in $k$, we have reduced to showing

\begin{equation}
    \lpn{ Q_\nu\psi \tilde{G}_{\la,k}}{p_c'\rightarrow p_c}\lesssim \delta\la^{2\mu_1(p_c)}2^{\frac{2k}{n+1}}.
\end{equation}
We will prove this by interpolation. By basic properties of the Fourier transform we have 

\begin{equation}
    \lpn{Q_\nu\psi \tilde{G}_{\la,k}}{2\rightarrow 2}\lesssim \delta 2^k.
\end{equation}
We will interpolate this with a $L^1\rightarrow L^\infty$ norm which will be more difficult to prove. Our goal is
\begin{equation}\label{1toinftyBoundGoal}
    \lpn{Q_\nu \psi \tilde{G}_{\la,k}}{1\rightarrow \infty}\lesssim \delta\la^{\frac{n-1}{2}}2^{k(1-\frac{n-1}{2})}.
\end{equation}
This is the only estimate that requires the $Q_\nu$ operators. To evaluate the kernel of $\cos(2\pi t\sqrt{-\Delta_{\T{n}}})$ we will lift to the universal cover which is a tiling of $\R{n}$ by the fundamental domain of $L^*$. That is,

\begin{equation}
    \cos(2\pi t\sqrt{-\Delta_{\T{n}}})(x,y) = \sum_{j\in\Z{n}} \cos(2\pi t\sqrt{-\Delta_{\R{n}}})(x-(y+j_{L^*})).
\end{equation}
With $j_{L^*} = j_1l_1^*+ ... + j_nl_n^*$ where $l_i^*$ are the vectors that define the fundamental domain of $\T{n}$. As $j$ runs through $\Z{n}$, one can think of $j_{L^*}$ as iterating through the domains in the universal cover of $\T{n}$. Let $K_{\la,k}(x,y)$ be the kernel of $\tilde{G}_{\la,k}$. For clarity we will first analyze without the microlocal cutoffs. Applying the above formula gives

\begin{equation}\label{kernelDef}
    K_{\la,k}(x,y) = \sum_{j\in \Z{n}}\tilde{K}_{\la,k}(x,(y+j_{L^*})),
\end{equation}
where

\begin{align*}
    &\tilde{K}_{\la,k}(x,y) = 2\delta\int e^{2\pi i\la t}\cos(2\pi t\sqrt{-\Delta_{\R{n}}})(x,y)\beta_k(2\pi |t|)\hat{\Psi}(\delta2\pi  t)dt\\
    &=2\delta\int \int e^{2\pi i\la t}e^{2\pi i\langle x-y,\xi\rangle }\cos(2\pi|\xi|)\beta_k(2\pi |t|)\hat{\Psi}(\delta2\pi  t)dtd\xi.
\end{align*}
Here, $x\in \T{n}$ is the variable we are integrating over in the norm whereas $y\in \T{n}$ is the kernel variable. This kernel vanishes if $|x-y|\geq 2^{k+1}$. By non-stationary phase the kernel is rapidly decreasing in $\la$ when $|x-y|\leq 2^{k-1}$. As such, we will assume $|x-y|\sim 2^k$. This selects out the $k$ in the sum of \eqref{kernelDef} that significantly contribute.

We will now compute a crude estimate that will motivate our particular definition of the $Q_\nu$. By stationary phase (see for example Lemma 5.13 in \cite{S1}) we have

\begin{equation}
    \tilde{K}_{\la,k}(x,y) = \sum_{\pm}\delta \la^{\frac{n-1}{2}}2^{-k\frac{n-1}{2}}e^{\pm 2\pi i \la|x-y|}a_{\pm}(\la,|x-y|) +  R_\la.
\end{equation}
Where $R_\la$ is an error term that enjoys rapid decay. To write this expression we need the reduction $|x-y|\sim 2^k$. By our previous reductions, the only contributing domains correspond to $j_{L^*}$ such that $|x-(y+j_{L^*})|\sim 2^k$ of which there are $O(2^{kn})$ such vectors. This gives 

\begin{equation}
    |K_{\la,k}(x,y)|\lesssim 2^{kn}\delta \la^{\frac{n-1}{2}}2^{-k\frac{n-1}{2}} = \delta \la^{\frac{n-1}{2}}2^{k\frac{n+1}{2}}.
\end{equation}
As its kernel obeys this bound we also have that $\lpn{\tilde{G}_{\la,k}}{1\rightarrow \infty}\lesssim \delta \la^{\frac{n-1}{2}}2^{k\frac{n+1}{2}}$. This bound is too weak to allow us to conclude when compared against \eqref{1toinftyBoundGoal}. To get an improved bound we need to limit the number of contributing domains. This is what the $Q_\nu$ allow us to do. Essentially, only domains contained in the angular support of the symbol of $Q_\nu$ will significantly contribute. We will count domains by counting lattice points as there is a one-to-one correspondence. As the support of this symbol is contained in a $\sim \delta$ aperture cone centered at the origin, instead of considering all lattice points in a ball of radius $2^k$ centered at $x-y$ which is within the fundamental domain, we instead consider the set of points in a ball of radius $2^k$ centered at $x-y$ that also make an angle $\lesssim \delta$ with $\nu$ as all other points do not significantly contribute. 

The number of such points is $\lesssim \delta^{n-1}2^{kn}\lesssim \delta^{-1}$ which is a significant improvement over out initial crude bound and is strong enough to allow us to conclude \eqref{1toinftyBoundGoal}. We now turn to the details.

First we write the kernel of $Q_\nu \psi\tilde{G}_{\la,k}$ as
\begin{align*}
    Q_\nu \psi\tilde{G}_{\la,k}(x,y) &= 2\delta\sum_{j\in \Z{n}}\int e^{2\pi i\la t}(Q_\nu \psi \cos(2\pi t\sqrt{-\Delta_{\R{n}}})(x,y+j_{L^*}))\beta_k(2\pi |t|)\hat{\Psi}(\delta2\pi  t)dt\\
    &=\sum_{j\in \Z{n}}K_{\la,k,\nu}(x,y+j_{L^*}).
\end{align*}
As a slight technical note, $Q_\nu$ is the pullback of $Q_\nu$ to the fundamental domain of $L^*$. This operator has the same Schwartz kernel as our original definition of $Q_\nu$ and so we suppress the difference.

Fix $x,y\in \T{n}$ (viewed as the fundamental domain of $L^*$) and $k$. Define

\begin{align}
    &J_{\text{main}}\coloneq\Bigl\{j\in \Z{n}: \Bigl|\frac{x-(y+j_{L^*})}{|x-(y+j_{L^*})|}-\nu\Bigr|\leq C\delta, |x-(y+j_{L^*})|\sim 2^k\Bigr\}\\
    &J_{\text{error}}\coloneq\Bigl\{j\in \Z{n}: \Bigl|\frac{x-(y+j_{L^*})}{|x-(y+j_{L^*})|}-\nu\Bigr|\geq C\delta, |x-(y+j_{L^*})|\sim 2^k\Bigr\}.
\end{align}
Here, the constant $C$ is selected to align with the $\xi$ support of the symbol of $Q_\nu$ and can be selected small enough. 

The main term is handled by a counting argument. We have $\# J_{\text{main}} = O(2^{kn}\delta^{n-1})$ as any $j$ in this set must be such that $j_{L^*}$ is contained in the intersection of a ball of radius $2^k$ centered at $x-y$ and a cone of angular aperture $C\delta$ centered at $x-y$. As $Q_\nu\psi$ is $L^\infty\rightarrow L^\infty$ bounded, we have

\begin{align}
    &\sum_{j\in J_{\text{main}}}|Q_\nu\psi K_{\la,k}(x,y)|\lesssim \sum_{j\in J_{\text{main}}}|K_{\la,k}(x,y)|\\ 
    &\lesssim \sum_{j\in J_{\text{main}}}\delta \la^{\frac{n-1}{2}}2^{-k\frac{n-1}{2}}\\
    &\lesssim \delta^n\la^{\frac{n-1}{2}}2^{k(n-\frac{n-1}{2})}.
\end{align}
Which, as $2^j\lesssim \delta^{-1}$ aligns with the bound we want. We now only have to show that the terms corresponding to $J_{\text{error}}$ do not contribute significantly.

We handle these terms by a stationary phase argument. To do this, fix $j_{L^*}$ and expand $K_{\la,k,\nu}(x,y+j_{L^*})$ up to a rapidly decreasing error as

\begin{equation}
    C_{\la,\delta,k}\tilde{\psi}(x)\int_{\T{n}} \int_{\R{n }} e^{2\pi i\la\langle x-z, \xi\rangle\pm 2\pi i\la |z-(y+j_{L^*})|}a_{\pm}(\la,|z-(y+j_{L^*})|)\beta_\nu(\xi/|\xi|)\beta(2\pi|\xi|)\psi(z)d\xi dz.
\end{equation}
As in part 1 of the proof we may extend the integral in $z$ from $\T{n}$ to $\R{n}$ by the support properties of $\psi(z)$. We also note that $C_{\la,\delta, k} = O(\delta \la^{\frac{3n-1}{2}}2^{-k\frac{n-1}{2}})$.

Because we are in a neighborhood of the $(x,z)$ diagonal, our previous reduction to only consider $(x,y+j_{L^*})$ such that $|x-(y+j_{L^*})|\sim 2^k$ implies we may reduce to considering $(z,y+j_{L^*})$ such that $|z-(y+j_{L^*})|\sim 2^k$.

Our condition on $J_{\text{error}}$ is such that 

\begin{equation}
    \Bigl|\frac{x-(y+j_{L^*})}{|x-(y+j_{L^*})|}-\nu\Bigr|\geq C\delta.
\end{equation}
Because of our cutoff function $\tilde{\psi}(x)$, the variable $x$ is restricted to a neighborhood of $z$. Additionally, the angular support of $\xi$ is constrained to a $\sim \delta$ neighborhood of $\nu$. Simple geometry shows that that the above conditions imply

\begin{equation}
    \Bigl|\frac{z-(y+j_{L^*})}{|z-(y+j_{L^*})|}-\xi\Bigr|\gtrsim \delta.
\end{equation}
Using this we can integrate by parts in $z$. After each iteration we gain by a factor of $(\la\delta)^{-1}$. The factor of $\delta^{-1}$ comes form the support properties of $\beta_\nu$. As we have assumed $\la\delta = \la^{\kappa}$ for a fixed $\kappa>0$ each application of integration by parts allows us to gain by $\la^{-k}$ resulting in a factor that is $O_N(\la^{-\kappa N})$ for any positive integer $N$. There are fixed polynomial power of $\la$ number of $j\in J_{\text{error}}$ and $C_{\la,\delta,k}$ is also a fixed polynomial power of $\la$ and so by choosing $N$ large enough we have that 

\begin{equation}
    \sum_{j\in J_{\text{error}}}|Q_\nu K_{\la,j}(x,y)|\lesssim_N \la^{-M},
\end{equation}
for any integer $M$. So $J_{\text{main}}$ contains all values that significantly contribute and this allows us to conclude. 
\end{proof}

\newcommand{\Addresses}{{% additional braces for segregating \footnotesize
  \bigskip
  \footnotesize

  (D. P.) \textsc{Department of Mathematics, Johns Hopkins University,
    Baltimore, Maryland, 21218}\par\nopagebreak
  \textit{E-mail address: }\texttt{dpezzi1@jhu.edu}

  \medskip
}}

\Addresses

\end{document}